\numberwithin{equation}{section}
\newcommand\reallywidehat[1]{%
\savestack{\tmpbox}{\stretchto{%
  \scaleto{%
    \scalerel*[\widthof{\ensuremath{#1}}]{\kern-.6pt\bigwedge\kern-.6pt}%
    {\rule[-\textheight/2]{1ex}{\textheight}}
  }{\textheight}%
}{0.5ex}}%
\stackon[1pt]{#1}{\tmpbox}%
}
\newlength{\wdth}
\newcommand{\mC}{\mathbb{C}}
\newcommand{\mD}{\mathbb{D}}
\newcommand{\mF}{\mathbb{F}}
\newcommand{\mN}{\mathbb{N}}
\newcommand{\mP}{\mathbb{P}}
\newcommand{\mQ}{\mathbb{Q}}
\newcommand{\mR}{\mathbb{R}}
\newcommand{\mT}{\mathbb{T}}
\newcommand{\mZ}{\mathbb{Z}}
\newcommand{\bba}{\mathbf{a}}
\newcommand{\bbb}{\mathbf{b}}
\newcommand{\bbe}{\mathbf{e}}
\newcommand{\bbw}{\mathbf{w}}
\newcommand{\bbz}{\mathbf{z}}
\newcommand{\homeq}{{\begin{smallmatrix}\phantom{g} \\ 
{\scaleobj{1.33}{\sim}} \\[-0.09cm] {\scaleobj{0.72}{h}} \end{smallmatrix}}}
\newtheorem{theorem}{Theorem}[section]
\newtheorem*{Theorem}{Theorem}
\newtheorem*{Corollary}{Corollary}
\newtheorem*{question}{Question}
\newtheorem{lemma}[theorem]{Lemma}
\newtheorem{corollary}[theorem]{Corollary}
\newtheorem{proposition}[theorem]{Proposition}
\newtheorem{conjecture}[theorem]{Conjecture}
\theoremstyle{definition}
\newtheorem{remark}[theorem]{Remark}
\theoremstyle{definition}
\theoremstyle{definition}
\theoremstyle{definition}
\newtheorem{example}[theorem]{Example}
\begin{document}

\keywords{Projective free rings, Hermite rings, algebras of continuous functions, Stein space, 
Banach algebras, maximal ideal space, homotopy equivalence, \v{C}ech cohomology, trivial shape, topological tensor products}

\subjclass[2020]{Primary 46J10; Secondary 46M10, 13C10, 46J15.}

 \title[]{Projective freeness and Hermiteness\\ of complex function algebras}

 \author[]{Alexander Brudnyi}
 \address{Department of Mathematics and Statistics\\
University of Calgary\\
Calgary, Alberta, Canada T2N 1N4}
\email{abrudnyi@ucalgary.ca}
 
 \author[]{Amol Sasane}
\address{Department of Mathematics \\London School of Economics\\
     Houghton Street\\ London WC2A 2AE\\ United Kingdom}
\email{A.J.Sasane@lse.ac.uk}
 
\maketitle
  
 \vspace{-0.5cm}
  
\begin{abstract} 
The paper studies projective freeness and Hermiteness
of algebras of complex-valued continuous functions on topological spaces, Stein algebras, and commutative unital Banach algebras.
New sufficient cohomology conditions on the maximal ideal spaces of the  algebras are given that guarantee the fulfilment of these properties. The results are illustrated by nontrivial examples.  Based on the Borsuk theory of shapes, a new class $\mathscr{C}$ of commutative unital complex Banach algebras is introduced (an analog of the class of local rings in commutative algebra) such that  
the projective tensor product  with algebras in $\mathscr C$ preserves 
projective freeness and Hermiteness.
Some examples of algebras of class $\mathscr{C}$ and of other projective free and Hermite function algebras are assembled. These include, e.g., Douglas algebras, finitely generated algebras of symmetric functions, Bohr-Wiener algebras, algebras of holomorphic semi-almost periodic functions, and algebras of bounded holomorphic functions on Riemann surfaces.
\end{abstract}

\section{Introduction}
In this article, we study projective freeness and Hermiteness of certain function algebras.
\noindent Recall that a unital commutative ring $R$  
is said to be {\em projective free} if every finitely generated projective $R$-module is free 
(i.e., if $M$ is an  $R$-module such that $M\oplus N\cong R^n$  for an $R$-module $N$ and  $n \in \mZ_+\, (:=\mN\cup\{0\})$, then $M\cong R^m$ for some  $m \in \mZ_+$). 
 In terms of matrices, the ring $R$ is projective free if and only if 
 for each $n\in \mathbb{N}$ every $n\times n$ matrix $X\not\in \{0_n, I_n\}$ over $R$ such that $X^2=X$ (i.e., an idempotent)
has a form $X=S(I_r\oplus  0_{n-r})S^{-1}$  for some $ 
S\in GL_n(R)$, $r\in \{1,\dots, n-1\}$; here $GL_n(R)$ denotes the group of invertible $n\times n$ matrices over $R$ and $0_k$ and $I_k$ are zero  and  identity $k\times k$ matrices; see \cite[Proposition~2.6]{Coh0}.

Quillen and Suslin (see, e.g., \cite{Coh}) proved, independently, 
that the polynomial ring over a projective free ring is again projective free; 
in particular settling affirmatively Serre's problem from 1955, 
which asked if  the polynomial ring $\mF[x_1, \dots, x_n]$ is projective free 
for any field $\mF$ (see \cite{Lam}).  Also, if $R$ is any projective
free ring, then the formal power series ring $R\llbracket x\rrbracket$ 
 is again projective free \cite[Theorem~7]{Coh}, and so  
 the formal power series ring $\mF\llbracket x_1, \dots  , x_n\rrbracket$ is projective free.
In the context of rings arising in analysis,  it is known that for example    
the algebra of complex continuous functions on a contractible topological space is projective free \cite{Vas}. According to the Grauert theorem \cite{Gra58} and  the Novodvorski-Taylor theory (see, e.g.,\cite{Nov},   \cite[\S 7.5]{Tay}),  the same holds for the algebra of holomorphic functions on a contractible reduced Stein space and  a commutative unital complex Banach algebra with a contractible maximal ideal space. 
 
 In control theory, projective freeness of rings of stable transfer functions plays an important role in the stabilisation problem, since if the underlying ring of stable transfer functions is projective free, then the stabilisability of an unstable system is equivalent to the existence of a doubly coprime factorisation, see \cite[Theorem~6.3]{Qua}.

The concept of a Hermite ring is a weaker notion than that of a projective free ring. A unital commutative ring $R$ is {\em Hermite} if every finitely generated stably free $R$-module is free. 
An $R$-module $M$ is {\em stably free} if it is stably isomorphic to a free module, i.e., 
$M\oplus R^k $ is free for some $k\in \mZ_+$. 
Since every stably free module is projective, every projective free ring is Hermite.  In terms of matrices, $R$ is Hermite if and only if every left-invertible $n\times k$ matrix, $k,n\in\mN$, $k<n$,  over $R$ can be completed to an invertible one (see, e.g., \cite[p.VIII]{Lam}, \cite[p.1029]{Tol}). 

In control
theory, Hermiteness of the underlying ring of stable transfer functions  implies that if 
the transfer function of an unstable system has a right (or left) coprime factorisation, 
then it has a doubly coprime factorisation; see \cite[Theorem~66]{Vid}.

In the article, we study successively projective freeness and Hermiteness of algebras of complex-valued continuous functions on topological spaces (\S2), Stein algebras (\S3) and commutative unital complex Banach algebras (\S4). We begin with 
a survey of some results  describing the structure of finitely generated projective modules over these algebras in terms of complex vector bundles over their maximal ideal spaces using the Swan and Vaserstein theorems \cite{Sw} and \cite{Vas}, the Grauert Oka principle \cite{Gra57}, \cite{Gra58} and the Novodvorski-Taylor theory \cite{Nov},   \cite{Tay}.  Then, based on this description, we give new sufficient cohomology conditions for 
projective freeness and Hermiteness of the considered algebras and illustrate our results by nontrivial examples.
In \S5, we introduce and study a new class $\mathscr C$  of commutative unital complex Banach algebras $A$ (namely those whose maximal ideal spaces are of trivial shape) such that for every commutative unital complex Banach algebra  $B$ the isomorphism classes of finitely generated projective modules over $B$ are in a one-to-one correspondence with those over the projective tensor product  $A\widehat\otimes_\pi B$. In particular, the projective tensor product with algebras in $\mathscr C$ preserves projective freeness and Hermiteness. Finally, in \S6 several examples of algebras of class $\mathscr{C}$ and of projective free function algebras  are assembled. These include
finitely generated algebras of symmetric functions (\S\ref{subsec_6.1}), Bohr-Wiener algebras (\S\ref{subsec_6.2}), algebras of holomorphic semi-almost periodic functions (\S\ref{subsec_6.3}), and algebras of bounded holomorphic functions on Riemann surfaces (\S\ref{subsec_6.4}).  The Appendix contains
auxiliary results used in \S6.

\section{Algebras of continuous functions}
\label{section_C(X)}

For a topological space $X$, we denote by $C(X)$ the algebra (with pointwise operations) of complex-valued continuous functions on $X$. Also, by $C_b(X)\subset C(X)$ we denote the subalgebra of bounded functions. $C_b(X)$ equipped with
the supremum norm, $\|f\|_\infty=\sup_{x\in X} |f(x)|$, is a complex commutative unital Banach algebra.

We recall that a  bundle  over $X$ is of {\em finite type} if there is a finite set $S$ of nonnegative continuous functions on X whose
sum is $1$ such that the restriction of the bundle to the set $\{x\in X\, :\, f(x)\ne 0\}$ is trivial for each $f$ in $S$.

Every bundle over a compact Hausdorff space $X$ is of finite type. In turn, a bundle over a normal space $X$  is of finite type if and only if there is a finite open covering $\mathfrak U$ of $X$ such that the restriction of the bundle to each $U\in \mathfrak U$ is trivial.

Let ${\textrm{\bf Gr}} ( \mC^{n} )$ be the set of all subspaces of $\mC^n$ equipped with the topology of the disjoint union of the Grassmanian manifolds ${\textrm{\bf Gr}_m} ( \mC^{n} )$, $0\le m\le n$. For each $n\in \mZ_+$ there is a natural embedding ${\textrm{\bf Gr}} ( \mC^{n} )\subset {\textrm{\bf Gr}} ( \mC^{n+1} )$  so that the space ${\textrm{\bf Gr}} ( \mC^{\infty} )=\bigcup_{n\in\mZ_+}\!{\textrm{\bf Gr}} ( \mC^{n} )$, equipped with the inductive limit topology, is well-defined.

The next result follows from Vaserstein's extension \cite[Theorem~2]{Vas} of the Swan theorem \cite{Sw}.

\begin{proposition}\label{prop2.1}
The following statements are equivalent:
\begin{enumerate}
\item The algebra $C(X)$ is projective free.
\item The algebra $C_b(X)$ is projective free.
\item The space $X$ is connected and each finite type complex vector bundle over $X$ is trivial.
\item Each continuous map $X\rightarrow {\textrm{\bf Gr}} ( \mC^{\infty} )$ with a relatively compact image is homotopic, via a homotopy with relatively compact image,  to a constant map. 
\end{enumerate}
\end{proposition}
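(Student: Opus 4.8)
The plan is to read all four conditions through the Serre--Swan--Vaserstein dictionary. By \cite[Theorem~2]{Vas} (which, together with \cite{Sw}, applies equally to $C(X)$ and to $C_b(X)$), isomorphism classes of finitely generated projective modules over $C(X)$, and over $C_b(X)$, are in natural bijection with isomorphism classes of finite type complex vector bundles over $X$; concretely, an idempotent $P\in M_n(C(X))$ (or $M_n(C_b(X))$) corresponds to the subbundle $x\mapsto\operatorname{im}P(x)$ of the trivial bundle $\underline{\mC}^{\,n}:=X\times\mC^{n}$, and the free module of rank $m$ corresponds to $\underline{\mC}^{\,m}$. With this in hand I would establish $(1)\Leftrightarrow(3)$ and $(2)\Leftrightarrow(3)$ at once, then prove $(3)\Leftrightarrow(4)$ by classifying-space arguments; $(1)\Leftrightarrow(2)$ is then immediate.

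For $(1)\Leftrightarrow(3)$ and $(2)\Leftrightarrow(3)$: since projective freeness means every finitely generated projective module is free, under the dictionary $C(X)$ (resp.\ $C_b(X)$) is projective free iff every finite type complex vector bundle over $X$ is isomorphic to some trivial bundle $\underline{\mC}^{\,m}$. A trivial bundle has globally constant rank, whereas the bundle equal to $\underline{\mC}^{\,0}$ on a proper nonempty clopen $U\subseteq X$ and to $\underline{\mC}^{\,1}$ on $X\setminus U$ is of finite type (the finite open cover $\{U,\,X\setminus U\}$ trivialises it) and is not trivial; hence the displayed condition forces $X$ connected, and once $X$ is connected it is exactly statement~(3). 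Equivalently, the idempotents of both $C(X)$ and $C_b(X)$ are precisely the indicators of clopen subsets of $X$, so the ``idempotent part'' of projective freeness is governed solely by connectedness of $X$. In particular $(1)\Leftrightarrow(2)$.

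For $(3)\Leftrightarrow(4)$: since each ${\textrm{\bf Gr}}(\mC^{N})=\bigsqcup_{0\le m\le N}{\textrm{\bf Gr}_m}(\mC^{N})$ is compact and closed in ${\textrm{\bf Gr}}(\mC^{\infty})$ and every compact subset of the inductive limit lies in a finite stage, a subset of ${\textrm{\bf Gr}}(\mC^{\infty})$ is relatively compact iff it is contained in some ${\textrm{\bf Gr}}(\mC^{N})$. Pulling back the tautological bundle $\gamma_N$ over ${\textrm{\bf Gr}}(\mC^{N})$, the map $f\mapsto f^{*}\gamma_N$ identifies continuous maps $X\to{\textrm{\bf Gr}}(\mC^{\infty})$ with relatively compact image, modulo homotopy through such maps, with finite type bundles over $X$ modulo isomorphism; surjectivity holds because a finite type bundle embeds in some $\underline{\mC}^{\,N}$ and is then classified by $x\mapsto E_x\in{\textrm{\bf Gr}}(\mC^{N})$. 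Under this identification, triviality of the bundle corresponds to null-homotopy of $f$ through maps with relatively compact image: a constant map classifies a trivial bundle, and conversely a trivialisation $\underline{\mC}^{\,m}\xrightarrow{\ \sim\ }f^{*}\gamma_N\hookrightarrow\underline{\mC}^{\,N}$ presents $f^{*}\gamma_N$ as the image of a continuous family of injective linear maps $\phi_x\colon\mC^{m}\to\mC^{N}$, and the rotation deformation $v\mapsto\bigl((\cos\tfrac{\pi t}{2})\,\phi_x(v),\,(\sin\tfrac{\pi t}{2})\,v\bigr)\in\mC^{N}\oplus\mC^{m}$ yields a homotopy from $f$ to a constant map staying inside ${\textrm{\bf Gr}}(\mC^{N+m})$. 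Finally $(4)$ forces $X$ connected: for a proper nonempty clopen $U$, the continuous map $X\to{\textrm{\bf Gr}}(\mC^{1})$ sending $U$ to $\{0\}$ and $X\setminus U$ to $\mC^{1}$ has finite (hence relatively compact) image but cannot be null-homotopic, because $\{0\}\in{\textrm{\bf Gr}_0}(\mC^{\infty})$ and $\mC^{1}\in{\textrm{\bf Gr}_1}(\mC^{\infty})$ lie in distinct clopen connected components of ${\textrm{\bf Gr}}(\mC^{\infty})$ while a constant map lands in a single component. Given connectedness, every finite type bundle has constant rank and every relatively-compactly-imaged $f$ lands in one ${\textrm{\bf Gr}_m}(\mC^{N})$, so $(3)$ and $(4)$ pass into each other via the classification above.

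I expect the real obstacle to be entirely concentrated in \cite[Theorem~2]{Vas}: the nontrivial point is that finitely generated projective modules over the algebra $C(X)$ (whose idempotent matrices need not even be bounded) and over the Banach algebra $C_b(X)$ are controlled by the \emph{same} object --- finite type bundles over $X$ --- which is precisely why $(1)\Leftrightarrow(2)$ cannot be seen by bare hands and why ``finite type'' (rather than ``all'') is the correct restriction on bundles. The only genuine work left on our side is the homotopy-theoretic translation in $(3)\Leftrightarrow(4)$, where the care lies in keeping all maps and homotopies inside a fixed ${\textrm{\bf Gr}}(\mC^{N})$ (the relative-compactness clauses) and in the rank/connectedness bookkeeping at the zero-dimensional strata.
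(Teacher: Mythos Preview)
Your approach matches the paper's: the paper does not give an argument at all but simply says the proposition ``follows from Vaserstein's extension \cite[Theorem~2]{Vas} of the Swan theorem.'' You are unpacking that citation, and your reductions $(1)\Leftrightarrow(3)$, $(2)\Leftrightarrow(3)$, together with the connectedness bookkeeping, are exactly right.

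There is one genuine soft spot in your treatment of $(4)\Rightarrow(3)$. You assert that ``the map $f\mapsto f^{*}\gamma_N$ identifies continuous maps \dots\ modulo homotopy through such maps, with finite type bundles \dots\ modulo isomorphism,'' and later that ``a constant map classifies a trivial bundle.'' But the proposition requires that a map \emph{homotopic} to a constant (through maps with relatively compact image) classifies a trivial bundle, i.e., that homotopic maps pull back $\gamma$ to isomorphic bundles. For an arbitrary topological space $X$ this is not the textbook homotopy-invariance theorem, which uses paracompactness of $X$; it is precisely one of the things Vaserstein has to establish in \cite[Theorem~2]{Vas}, and in fact the Grassmannian clause (your item~(4)) is already part of his statement. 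So either attribute $(3)\Leftrightarrow(4)$ to Vaserstein as well, or supply the missing step directly: a homotopy $H\colon X\times[0,1]\to\textbf{Gr}_m(\mC^{M})$ is the same data as a norm-continuous path $t\mapsto P_t$ of self-adjoint idempotents in the Banach algebra $M_M(C_b(X))$ (boundedness is automatic since $\textbf{Gr}_m(\mC^{M})$ is compact), and in any Banach algebra a continuous path of idempotents has conjugate endpoints, whence the associated bundles are isomorphic. Your rotation argument handles the converse direction $(3)\Rightarrow(4)$ cleanly.
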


In what follows, for a topological space $Y$ and an abelian group $G$, 
the $n^{\textrm{th}}$ \v{C}ech cohomology group of $Y$ with values in $G$ is denoted by $H^n(Y,G)$, $n\in \mZ_+$.

Let $\beta X$ be the Stone-\v{C}ech compactification of $X$. Then $\beta X$ is naturally homeomorphic to the maximal ideal space of $C_b(X)$, and $C(\beta X)$ is isometrically isomorphic to $C_b(X)$. Proposition \ref{prop2.1} implies that if $C(X)$ is projective free, then $C(\beta X)$ is projective free, and so $\beta X$ is connected and all finite rank complex vector bundles over $\beta X$ are trivial.

Since isomorphism classes of rank one complex vector bundles over $\beta X$ are in one-to-one correspondence (determined by assigning to a bundle its first Chern class) with elements of  
$H^2(\beta X, \mZ)$,  projective freeness of $C(X)$ implies that $H^2(\beta X, \mZ) = 0$.  Also, in this case  Proposition~\ref{prop2.1} implies that the Grothendieck group $K_0(C_b(X))$ of the algebraic $K$-theory for $C_b(X)$ (isomorphic to the group $K^0(\beta X)$ of the Atiyah-Hirzebruch theory)  
is $\mathbb Z$. Since the Chern character $\textrm{ch}$ determines a ring isomorphism $K_0(C_b(X))\otimes \mQ\rightarrow H^{\textrm{even}}(\beta X,\mQ)$, 
see, e.g., \cite[Theorem~3, p.16-09]{Kar}, we obtain the following result. 

\begin{corollary}
\label{16_may_2022_14:29}
 If the algebra $C(X)$ is projective free$,$ then $\beta X$ is connected$,$  $H^2(\beta X, \mZ) = 0,$ and $H^{2n}(\beta X,\mQ)=0$ for all $n\ge 2$.
 \end{corollary}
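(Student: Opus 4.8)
The plan is to assemble the statement from Proposition~\ref{prop2.1} applied to the bounded subalgebra, together with the two classical facts quoted just before the statement: the classification of line bundles by the first Chern class, and the Chern character isomorphism. I would first observe that, by the equivalence (1)$\Leftrightarrow$(2) in Proposition~\ref{prop2.1}, projective freeness of $C(X)$ forces projective freeness of $C_b(X)$. Since $C_b(X)$ is isometrically isomorphic to $C(\beta X)$ and $\beta X$ is compact Hausdorff, applying the equivalence (1)$\Leftrightarrow$(3) of Proposition~\ref{prop2.1} to the space $\beta X$ --- over which, as recalled in \S\ref{section_C(X)}, \emph{every} complex vector bundle is automatically of finite type --- yields that $\beta X$ is connected and that every finite rank complex vector bundle over $\beta X$ is trivial.

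Next I would translate the triviality of all bundles into the two cohomological vanishing statements. For $H^2$: the isomorphism classes of rank one complex vector bundles over the compact Hausdorff space $\beta X$ form an abelian group under tensor product, and the first Chern class gives a group isomorphism of this group onto $H^2(\beta X,\mZ)$; as every line bundle over $\beta X$ is trivial, this group is trivial, so $H^2(\beta X,\mZ)=0$. For the rational cohomology: triviality of all finite (type) bundles together with connectedness of $\beta X$ makes the reduced group $\widetilde{K}^0(\beta X)$ vanish, so that $K_0(C_b(X))\cong K^0(\beta X)\cong\mZ$, generated by the class of the trivial line bundle. Feeding this into the Chern character ring isomorphism $\textrm{ch}\colon K_0(C_b(X))\otimes\mQ\to H^{\textrm{even}}(\beta X,\mQ)$ gives $H^{\textrm{even}}(\beta X,\mQ)\cong\mQ$ concentrated in degree $0$; hence $H^{2n}(\beta X,\mQ)=0$ for every $n\ge 1$, and in particular for every $n\ge 2$, as claimed.

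Since every ingredient is either stated in the excerpt (Proposition~\ref{prop2.1}, the identification $C_b(X)\cong C(\beta X)$, the finite-type property of bundles over compact Hausdorff spaces, and the two cited theorems on $H^2$-classification and the Chern character) or a routine bookkeeping step, I do not expect a genuine obstacle. The only point that warrants a little care is that $\beta X$ can be a very large, non-metrizable compact Hausdorff space, so the classification of line bundles, the computation of $K^0$, and the Chern character isomorphism must be invoked in the \v{C}ech/Atiyah--Hirzebruch generality in which they are cited, rather than in their more familiar CW-complex form.
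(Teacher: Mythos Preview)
Your proposal is correct and follows essentially the same route as the paper: the discussion preceding the corollary uses Proposition~\ref{prop2.1} to pass to $C(\beta X)$, then invokes the first Chern class classification of line bundles for $H^2(\beta X,\mZ)=0$, computes $K_0(C_b(X))\cong K^0(\beta X)\cong\mZ$, and applies the Chern character ring isomorphism to obtain the rational vanishing. Your added caveat about needing the \v{C}ech/Atiyah--Hirzebruch generality is apt, and is implicit in the paper's citation of \cite{Kar}.
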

 
 For instance, if $X$ is a connected closed orientable manifold such that the algebra $C(X)$ is projective free$,$ then the corollary along with the Poincar\'{e} duality imply that $X$ has an  odd dimension $n$  and $H^{k}( X,\mQ)=0$ for all $k\not\in\{0,n\}$. Note that even if  $H^{k}(X,\mZ)=0$ for all $k\not\in\{0,n\}$, $C(X)$ is not necessarily projective free.  In fact, it is not even necessarily Hermite; see Example~\ref{14_May_2022_14:40} below.
 
 Let $\Omega^n=X\times\mC^n$ denote the standard trivial rank $n$ complex vector bundle  over a  topological space $X$. A complex vector bundle $E$ over $X$ is {\em stably trivial} if there exist $m,n\in \mZ_+$ such that the bundle $E\oplus \Omega^m$ is isomorphic to $\Omega^n$. 
 
Our next result can be easily deduced from \cite[Theorem~2]{Vas} as well.

 \begin{proposition}\label{prop2.3}
 The following statements are equivalent:
\begin{enumerate}
\item The algebra $C(X)$ is Hermite.
\item The algebra $C_b(X)$ is Hermite.
\item Each stably trivial finite type complex vector bundle over $X$ is trivial.
\end{enumerate}
\end{proposition}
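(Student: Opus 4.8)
The plan is to mirror the proof of Proposition~\ref{prop2.1}, replacing ``projective free'' by ``Hermite'' throughout. Recall that a unital commutative ring $R$ is Hermite precisely when every finitely generated stably free $R$-module is free; equivalently, when every left-invertible $n\times k$ matrix over $R$ with $k<n$ can be completed to an element of $GL_n(R)$. The engine of the argument is Vaserstein's Theorem~2 (recalled in the survey preceding the statement), which furnishes an equivalence of categories between the finite type complex vector bundles over $X$ and the finitely generated projective $C(X)$-modules, implemented by the global sections functor $E\mapsto\Gamma(X,E)$; under it the trivial bundle $\Omega^n$ corresponds to the free module $C(X)^n$, Whitney sums correspond to direct sums of modules, and a bundle is trivial if and only if its module of sections is free. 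Being an equivalence, the functor is in particular essentially surjective and reflects isomorphisms. The same statement holds with $X$ replaced by the compact Hausdorff space $\beta X$ and $C(X)$ by $C_b(X)\cong C(\beta X)$, where moreover every vector bundle is automatically of finite type.

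First I would establish the equivalence of (1) and (3). Given a finitely generated projective $C(X)$-module $M$, write $M\cong\Gamma(X,E)$ for a finite type bundle $E$. Since $M\oplus C(X)^k\cong\Gamma(X,E\oplus\Omega^k)$ and the functor reflects isomorphisms, $M$ is stably free if and only if $E\oplus\Omega^k$ is trivial for some $k$, i.e.\ $E$ is stably trivial; and $M$ is free if and only if $E$ is trivial. Hence ``every finitely generated stably free $C(X)$-module is free'' is exactly the assertion that every stably trivial finite type complex vector bundle over $X$ is trivial, which is (3). (Concretely, a left-invertible $n\times k$ matrix over $C(X)$ presents a split inclusion $\Omega^k\hookrightarrow\Omega^n$ of bundles with stably trivial finite type quotient $F$, and the matrix completes to an element of $GL_n(C(X))$ iff $F$ is trivial; every stably trivial finite type bundle arises as such a quotient.) Running the identical argument over the compact space $\beta X$ shows that $C_b(X)\cong C(\beta X)$ is Hermite if and only if every stably trivial vector bundle over $\beta X$ is trivial.

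It then remains to identify the bundle condition over $X$ with the one over $\beta X$, and this is the step requiring the most care. Let $\iota\colon X\to\beta X$ be the canonical map. A finite type rank-$n$ bundle over $X$ is, by definition, classified by a continuous map into a finite Grassmannian $\mathbf{Gr}_n(\mC^N)$; since the latter is compact Hausdorff this map extends over $\beta X$, so every finite type bundle over $X$ is a pullback $\iota^*F$ of a bundle $F$ over $\beta X$. What one must further check is that $F\mapsto\iota^*F$ is a bijection on isomorphism classes --- equivalently, that extension of scalars along $C(\beta X)=C_b(X)\hookrightarrow C(X)$ induces an equivalence between the respective categories of finitely generated projective modules; this is the ``maximal ideal space'' side of Vaserstein's theorem, already used in the proof of Proposition~\ref{prop2.1}. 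Granting it, $\iota^*$ preserves Whitney sums and trivial bundles, hence carries stably trivial to stably trivial; and being bijective on isomorphism classes it reflects triviality. Therefore ``every stably trivial finite type bundle over $X$ is trivial'' and ``every stably trivial bundle over $\beta X$ is trivial'' are equivalent, and combining this with the previous paragraph yields (1)$\Leftrightarrow$(2)$\Leftrightarrow$(3). I expect the only genuine obstacle to be precisely this injectivity/equivalence claim for $\iota^*$: a fibrewise isomorphism of the pulled-back bundles over $X$ need not be bounded, so it cannot be extended over $\beta X$ by a soft compactness argument, and one really does need Vaserstein's refinement of the Swan theorem here; everything else is routine bookkeeping with the monoidal equivalence.
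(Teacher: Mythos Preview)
Your proposal is correct and matches the paper's approach: the paper gives no detailed proof of Proposition~\ref{prop2.3}, simply stating that it ``can be easily deduced from \cite[Theorem~2]{Vas} as well,'' and your argument is precisely the natural elaboration of that deduction, paralleling the projective-free case of Proposition~\ref{prop2.1}. Your identification of the one nontrivial point---that the bijection between isomorphism classes of finite type bundles over $X$ and bundles over $\beta X$ (equivalently, between finitely generated projective $C(X)$- and $C_b(X)$-modules) is part of Vaserstein's theorem and not a soft consequence of compactness---is apt and is exactly what the citation is doing the work of.
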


Propositions \ref{prop2.1} and \ref{prop2.3} imply the following result.

\begin{corollary}\label{cor2.4}
 Projective freeness and Hermiteness of the algebra $C(X)$ depend only on the homotopy type of $X$.
\end{corollary}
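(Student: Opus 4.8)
The plan is to reduce everything to Propositions~\ref{prop2.1} and \ref{prop2.3} and to verify that the topological conditions appearing there are invariant under homotopy equivalence. Fix a homotopy equivalence $h\colon X\to Y$ with homotopy inverse $k\colon Y\to X$, so that $k\circ h\simeq\mathrm{id}_X$ and $h\circ k\simeq\mathrm{id}_Y$.

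For projective freeness I would work with the characterization in part~(4) of Proposition~\ref{prop2.1}, which is phrased entirely in terms of maps into ${\textrm{\bf Gr}} ( \mC^{\infty} )$ and is manifestly homotopy invariant. Assume $C(X)$ is projective free and let $\varphi\colon Y\to {\textrm{\bf Gr}} ( \mC^{\infty} )$ have relatively compact image. Then $\varphi\circ h$ has image contained in that of $\varphi$, hence relatively compact, so by (4) there is a homotopy $G\colon X\times[0,1]\to {\textrm{\bf Gr}} ( \mC^{\infty} )$ with relatively compact image joining $\varphi\circ h$ to a constant. Composing $G$ with $k\times\mathrm{id}$ gives a relatively compact homotopy from $\varphi\circ h\circ k$ to a constant, and composing a homotopy $h\circ k\simeq\mathrm{id}_Y$ with $\varphi$ gives a relatively compact homotopy from $\varphi\circ h\circ k$ to $\varphi$ (its image again lies in that of $\varphi$). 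Concatenating shows $\varphi$ is relatively compactly null-homotopic, whence $C(Y)$ is projective free by Proposition~\ref{prop2.1}; the reverse implication follows by symmetry.

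For Hermiteness I would argue at the level of bundles using part~(3) of Proposition~\ref{prop2.3}. The point is that pullback along $h$ sends a finite type complex vector bundle on $Y$ to a finite type bundle on $X$ (pull back a subordinate partition of unity), commutes with $\oplus$, and carries $\Omega^n$ to $\Omega^n$; invoking the homotopy invariance of pullbacks \emph{for bundles of finite type} yields $h^{*}k^{*}\cong\mathrm{id}$ and $k^{*}h^{*}\cong\mathrm{id}$ on isomorphism classes of such bundles, so $h^{*}$ is a bijection preserving rank, direct sums, triviality, and therefore stable triviality. Hence ``every stably trivial finite type bundle over $X$ is trivial'' holds if and only if the same holds over $Y$, which by Proposition~\ref{prop2.3} is exactly the homotopy invariance of Hermiteness of $C(X)$. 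The same bundle-level argument, together with the elementary fact that connectedness is preserved by homotopy equivalence, re-proves the projective free case through part~(3) of Proposition~\ref{prop2.1}.

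The only point that needs care — and the main obstacle — is that the usual homotopy invariance of vector bundle pullbacks can fail over arbitrary, non-paracompact topological spaces; it is valid precisely for bundles of finite type, which is why Propositions~\ref{prop2.1} and \ref{prop2.3} are stated using finite type bundles and why part~(4) of Proposition~\ref{prop2.1} restricts to maps with relatively compact image. Once this ingredient is in place, the remainder is bookkeeping: checking that finite type, triviality, stable triviality, and connectedness are each transported along $h$ and $k$.
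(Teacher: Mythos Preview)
Your proposal is correct and follows essentially the same approach as the paper: the paper's ``proof'' consists of the single sentence that Propositions~\ref{prop2.1} and \ref{prop2.3} imply the corollary, leaving the homotopy-invariance verification implicit. You have simply (and correctly) spelled out the details, in particular the use of part~(4) of Proposition~\ref{prop2.1} for projective freeness and the finite-type bundle argument for Hermiteness, together with the observation that the relatively-compact-image / finite-type hypotheses are precisely what makes the relevant homotopy invariance go through over arbitrary spaces.
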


\begin{example}
\label{14_May_2022_14:40}
Let $S^n$ denote the $n$-dimensional unit sphere.  
Cutting the sphere $S^n$  into two `bowls' (each homeomorphic to an $n$-dimensional ball) 
 such that their intersection is a `collar' $C$ containing the subsphere $S^{n-1}$, 
 it can be seen that 
the isomorphism classes $\textrm{Vect}_k(S^n)$ of rank $k$
complex vector bundles on $S^n$ are in a one-to-one correspondence with 
the elements of  the  $(n-1)^{\textrm{st}}$ 
homotopy group $\pi_{n-1}(U(k))$ of the unitary group $U(k)$. 
As the homotopy group $\pi_4(U(2))\cong \pi_3(S^4)\cong \mZ_2$, it follows that 
there exists a nontrivial rank $2$ complex vector bundle $E$ over $S^5$.
Hence, $C(S^5)$ is not projective free, although $H^{k}(S^5,\mZ)=0$ for all $k\not\in \{0,5\}$. Moreover, since $K_0(C(S^5))=K^0(S^5)=\mZ$ 
(see, e.g., \cite[Corollary~5.2 on p.121 and p.150 of Chap. 11]{Hus}), every finite rank complex vector bundle on $S^5$ is stably trivial. Hence, $C(S^5)$ is not Hermite, as $E$ is nontrivial. 
\end{example}

Next, we formulate an auxiliary result used in our proofs which follows easily, e.g., from 
 \cite[Theorem~11.9, p.287]{EilSte}, \cite[Lemmas\,1,\,2]{Lin}, 
 and \cite[Theorem~3.1, p.261]{EilSte}.

For preliminaries on projective  and injective  limits,  
 see, e.g., \cite[Chap. VIII, \S2]{EilSte}.  Recall that the projective limit of a projective system $((X_i)_{i\in I}, (f_{ij})_{i\le j\in I})$ of nonempty compact Hausdorff spaces is a nonempty compact Hausdorff space in the limit topology. 
 Henceforth, the projective limit will be denoted by  $\varprojlim X_i$, with the projective system $((X_i), (f_{ij}))$ being understood. 
 As usual, the abbreviation \textrm{ANR} stands for absolute neighbourhood retract (for the corresponding definition, see, e.g., \cite[p.80]{Hu}). Throughout the article, $\cong$ denotes an isomorphism of objects of a category in question, and $\homeq$   a homotopy of maps or a homotopy equivalence of topological spaces. 

\begin{lemma}
\label{proposition_25_may_2022_1510}$\;$ 
Suppose $X=\varprojlim X_i$ and $\pi_i: X\rightarrow X_i$ are canonical continuous projections, where the $X_i$ are nonempty compact Hausdorff spaces,  and $Y$ is an \textrm{\em ANR}. 
\begin{itemize}
\item[(1)] Given a continuous map $f:X\rightarrow Y,$  there exists an index $i$ and a continuous map $f_{i}:X_{i}\rightarrow Y$ such that  $f_{i}\circ \pi_{i}\homeq f$. 
\item[(2)] Given a finite rank complex vector bundle $E$ over $X,$ there exists an index $i $ and a complex vector bundle $E_{i}$ over $X_{i}$ such that the pullback bundle 
$\pi_{i}^*(E_{i}) \cong E$.
 Moreover$,$ if $E$ is stably trivial$,$ then $E_i$ is stably trivial as well.
\item[(3)] For each $k\in\mZ_+$, the \v{C}ech cohomology group 
$H^k(X,\mZ)= \cup_i\, \pi_i^*(H^k(X_i,\mZ))$.\footnote{In other words, $H^k(X,\mZ)\cong\varinjlim H^k(X_i,\mZ)$,  the  injective limit of the related injective system of groups $(H^k(X_i,\mZ), f_{ij}^*)$.}
\end{itemize}
\end{lemma}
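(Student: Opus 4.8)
The plan is to reduce all three statements to the classical fact that a compact Hausdorff space which is a projective limit $X = \varprojlim X_i$ is the "continuous limit" of the $X_i$ in the sense that every map, bundle, or cochain on $X$ comes from some finite stage $X_i$ up to the appropriate equivalence. I would organize the proof around the references cited before the statement and invoke them stage by stage.

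First, for (1): since $Y$ is an ANR, it can be embedded as a neighborhood retract of a normed linear space (or of the Hilbert cube if we want a compact ambient space, after noting $Y$ need not be compact—one embeds $f(X)$, which is compact, into a finite-dimensional piece of an ANR neighborhood). The map $f\colon X\to Y$ factors through a map into this ANR; by the approximation/extension property of ANRs, any map from a compact subspace of $\varprojlim X_i$ extends to a neighborhood, and since the $\pi_i^{-1}(\text{open})$ form a basis for the topology of $X$, the map is constant enough along fibers of some $\pi_i$ that it factors (up to homotopy, using the retraction) through $f_i\circ\pi_i$ for some $i$. This is exactly the content of \cite[Lemmas\,1,\,2]{Lin}, so I would cite that directly rather than reproving it. The homotopy $f_i\circ\pi_i\homeq f$ is built from the straight-line homotopy in the ambient normed space composed with the ANR retraction.

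Next, for (2): a finite-rank complex vector bundle over a compact Hausdorff space is classified by a homotopy class of maps into a Grassmannian $\mathrm{\bf Gr}_m(\mC^N)$ for $N$ large, which is a compact ANR (a smooth manifold). Applying part (1) to this classifying map yields a map $X_i\to \mathrm{\bf Gr}_m(\mC^N)$ whose pullback $E_i$ to $X_i$ satisfies $\pi_i^*(E_i)\cong E$, because pulling back the classifying map along $\pi_i$ recovers the classifying map of $E$ up to homotopy, and homotopic maps have isomorphic pullback bundles. For the stably trivial addendum: if $E\oplus\Omega^k\cong\Omega^\ell$ over $X$, then $\pi_i^*(E_i)\oplus\Omega^k\cong\Omega^\ell$; passing to classifying maps, the map $X\to\mathrm{\bf Gr}$ classifying $E_i\oplus\Omega^k$ is null-homotopic after composing with $\pi_i$, so by part (1) (applied in the reverse direction, or by the injectivity statement in \cite{Lin}) it is already null-homotopic on some $X_j$, $j\ge i$; replacing $i$ by $j$ and $E_i$ by its pullback $f_{ij}^*(E_i)$, we get $E_i\oplus\Omega^k\cong\Omega^\ell$ over that later stage. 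This uses \cite[Theorem~11.9, p.287]{EilSte}.

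Finally, for (3): \v{C}ech cohomology is continuous, i.e., it takes projective limits of compact Hausdorff spaces to injective limits of groups—this is \cite[Theorem~3.1, p.261]{EilSte}. Thus $H^k(X,\mZ)\cong\varinjlim H^k(X_i,\mZ)$, and unwinding the definition of the injective limit of the system $(H^k(X_i,\mZ), f_{ij}^*)$ gives precisely $H^k(X,\mZ)=\bigcup_i \pi_i^*(H^k(X_i,\mZ))$, since the canonical maps $\pi_i^*$ are the structure maps to the colimit and any class at a finite stage maps into $X$ via $\pi_i^*$. I would simply cite the theorem and note the footnote's reformulation.

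The main obstacle is purely bookkeeping: making sure that in (2) the index $i$ chosen to realize $\pi_i^*(E_i)\cong E$ and the index $j\ge i$ needed to realize stable triviality of $E_i$ are reconciled by passing to the later index and replacing $E_i$ with $f_{ij}^*(E_i)$ (using that pullback commutes with $\oplus\Omega^k$ and preserves triviality). Everything substantive is already in the cited literature; the lemma is essentially a packaging of continuity of homotopy and cohomology functors under projective limits of compacta, so I do not expect any genuine difficulty beyond citing the right statements in the right order.
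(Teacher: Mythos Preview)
Your proposal is correct and matches the paper's approach exactly: the paper does not give a detailed proof but simply states that the lemma ``follows easily, e.g., from \cite[Theorem~11.9, p.287]{EilSte}, \cite[Lemmas\,1,\,2]{Lin}, and \cite[Theorem~3.1, p.261]{EilSte},'' which are precisely the references you invoke for parts (2), (1), and (3), respectively. Your write-up supplies the connective tissue (classifying maps into Grassmannians for (2), passing to a later index $j\ge i$ for the stably trivial addendum) that the paper leaves implicit.
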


The following result expresses continuity of projective freeness and Hermiteness for algebras $C(X)$ on compact Hausdorff spaces $X$. 
\begin{proposition}\label{5_June_2022_9:00}
Suppose $X\homeq \varprojlim X_i$,  
where the $X_i$ are nonempty compact Hausdorff spaces such that 
either  all   algebras $C(X_i)$ are projective free$,$ or 
 all   algebras $C(X_i)$ are Hermite. 
Then $C(X)$ is projective free or Hermite$,$ respectively. 
\end{proposition}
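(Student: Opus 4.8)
The plan is to use Proposition~\ref{prop2.1}(3) (for the projective free case) and Proposition~\ref{prop2.3}(3) (for the Hermite case), reducing everything to the triviality of finite type complex vector bundles. Since $X$ is compact Hausdorff, \emph{every} bundle over $X$ is of finite type, so in both cases I only need to show that the relevant bundles over $X$ are trivial. By Corollaries~\ref{cor2.4}, projective freeness and Hermiteness are homotopy invariants, so I may replace $X$ by $\varprojlim X_i$ itself and assume $X=\varprojlim X_i$ with canonical projections $\pi_i\colon X\to X_i$.

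For the projective free case, let $E$ be a finite rank complex vector bundle over $X$. By Lemma~\ref{proposition_25_may_2022_1510}(2), there is an index $i$ and a bundle $E_i$ over $X_i$ with $\pi_i^*(E_i)\cong E$. Since $C(X_i)$ is projective free, Proposition~\ref{prop2.1}(3) tells us that $X_i$ is connected and every (finite type) bundle over $X_i$ is trivial; in particular $E_i$ is trivial, hence so is its pullback $E$. It remains to check that $X=\varprojlim X_i$ is connected: this follows because a projective limit of nonempty connected compact Hausdorff spaces is connected (each $X_i$ is connected by the above, and the projective limit of a system of connected compacta is connected — standard, via the fact that a compact Hausdorff space is connected iff it admits no surjection onto the two-point discrete space, together with Lemma~\ref{proposition_25_may_2022_1510}(1) applied to such a map and the discrete ANR $\{0,1\}$). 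Applying Proposition~\ref{prop2.1} again gives that $C(X)$ is projective free.

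For the Hermite case, let $E$ be a stably trivial finite rank complex vector bundle over $X$. By Lemma~\ref{proposition_25_may_2022_1510}(2) again, there is an index $i$ and a bundle $E_i$ over $X_i$ with $\pi_i^*(E_i)\cong E$, and moreover $E_i$ may be taken stably trivial. Since $C(X_i)$ is Hermite, Proposition~\ref{prop2.3}(3) forces $E_i$ to be trivial, hence $E\cong\pi_i^*(E_i)$ is trivial. By Proposition~\ref{prop2.3}, $C(X)$ is Hermite.

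The only genuinely non-formal point is the connectedness of the projective limit in the first case — everything else is a direct invocation of Lemma~\ref{proposition_25_may_2022_1510} and Propositions~\ref{prop2.1} and \ref{prop2.3}. I expect the main (minor) obstacle to be packaging that connectedness argument cleanly; one clean route is to note that if $X=\varprojlim X_i$ were disconnected, the indicator function of a nontrivial clopen set would be a continuous map $X\to\{0,1\}$, which by Lemma~\ref{proposition_25_may_2022_1510}(1) (with $Y=\{0,1\}$, an ANR) factors up to homotopy — hence, the target being discrete, exactly — through some $X_i$, contradicting connectedness of that $X_i$.
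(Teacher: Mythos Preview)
Your proof is correct and follows essentially the same route as the paper's (very terse) proof: reduce to $X=\varprojlim X_i$ via Corollary~\ref{cor2.4}, then invoke Lemma~\ref{proposition_25_may_2022_1510}(2) together with Propositions~\ref{prop2.1} and~\ref{prop2.3}. The only thing to tidy is the order of exposition---the claim that $X$ is compact Hausdorff (so that all bundles are of finite type) should come \emph{after} the reduction to $X=\varprojlim X_i$, not before; your explicit connectedness argument via Lemma~\ref{proposition_25_may_2022_1510}(1) with $Y=\{0,1\}$ is a nice unpacking of a step the paper leaves implicit.
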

\begin{proof}
Due to Corollary \ref{cor2.4}, we can assume $X=\varprojlim X_i$.
Then the required result follows from Propositions~\ref{prop2.1}, \ref{prop2.3} (applied to the algebras $C(X_i)$) and Lemma~\ref{proposition_25_may_2022_1510}. 
\end{proof}

To formulate our next result, we recall some definitions. 
 
 The {\em covering dimension} of a topological space $X$, denoted by $\dim X$, is the smallest integer $d$  such that every open covering of $X$  has an open refinement of  order at most $d+1$. If no such integer exists,  then $X$ is said to have {\em infinite covering dimension}. 

Given a finite open cover $\alpha=\{U_1,\dots, U_r\}$ of a compact topological space $X$,  its {\em nerve} is the abstract
simplicial complex $A_\alpha$, whose vertex set is $\alpha$ and such that the simplex 
$\sigma = [U_{i_0} , U_{i_1} , \dots , U_{i_n}] \subset A_\alpha$ if and only if $\cap_{j=0}^n U_{i_j}\neq \emptyset$.

We also recall that  a path-connected topological space $X$ is {\em $n$-simple} if for each $x \in  X$, 
the fundamental group $\pi_1(X, x_0)$ acts trivially on the $n^{\textrm{th}}$ homotopy group $\pi_n(X, x_0)$, see, e.g., \cite[Chap. IV, \S16]{Hu00} for details. For instance, every path-connected topological group is $n$-simple for all $n\in \mN$.

The following result, generalising \cite[Corollary~1.4]{BruSas} and 
\cite[Theorem~5]{Tol}, gives a sufficient condition for $C(X)$ to be projective free or Hermite.

 \begin{theorem}
\label{teo2.8}
If $X\homeq \varprojlim X_i$, 
where the $X_i\,(\ne\emptyset)$ are  finite-dimensional compact Hausdorff spaces 
such that $H^n(X_i,\mZ)=0$ for all $n\geq 5,$  then $C(X)$ is Hermite. In addition$,$ if  all spaces $X_i$ are connected and satisfy $H^2(X_i,\mZ)=H^4(X_i,\mZ)=0,$ then $C(X)$ is projective free. 
\end{theorem}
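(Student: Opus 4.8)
The plan is to reduce the statement to a question about triviality of complex vector bundles over a single finite-dimensional compact Hausdorff space and then to settle that by obstruction theory, the cohomology hypotheses being exactly what is needed to annihilate the relevant obstruction groups. By Proposition~\ref{5_June_2022_9:00} it suffices to show that each $C(X_i)$ is Hermite (respectively projective free); so, renaming, we may assume that $X$ is a single nonempty finite-dimensional compact Hausdorff space with $H^n(X,\mZ)=0$ for all $n\ge 5$ (and, for the second assertion, that $X$ is connected with $H^2(X,\mZ)=H^4(X,\mZ)=0$). By Propositions~\ref{prop2.3} and \ref{prop2.1}, and since every bundle over a compact Hausdorff space is of finite type, it is enough to prove: every stably trivial complex vector bundle over $X$ is trivial; and, in the second case (where $X$ is connected), every complex vector bundle over $X$ is trivial.

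The heart of the matter is the following assertion, which I would isolate as a lemma: \emph{a complex vector bundle $E$ over $X$ is trivial if and only if $c_1(E)=0$ in $H^2(X,\mZ)$ and $c_2(E)=0$ in $H^4(X,\mZ)$.} Granting it, both claims follow at once. If $E$ is stably trivial, then $E\oplus\Omega^m\cong\Omega^{\,\mathrm{rk}\,E+m}$ for some $m$, so multiplicativity of the total Chern class gives $c_j(E)=c_j(E\oplus\Omega^m)=c_j(\Omega^{\,\mathrm{rk}\,E+m})=0$ for all $j\ge 1$, and hence $E$ is trivial; this yields Hermiteness. When moreover $H^2(X,\mZ)=H^4(X,\mZ)=0$, every bundle $E$ automatically has $c_1(E)=c_2(E)=0$ and is therefore trivial, which together with connectedness of $X$ yields projective freeness.

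To prove the lemma I would proceed in three steps. First, the universal coefficient theorem for \v{C}ech cohomology of a compact Hausdorff space — obtained by passing to the direct limit, over the nerves of finite open covers, of the usual universal coefficient sequences — shows that $H^n(X,\mZ)=0$ for $n\ge 5$ forces $H^n(X,G)=0$ for all $n\ge 5$ and every finitely generated abelian group $G$, since both the $\otimes$ and the $\mathrm{Tor}$ terms involve $H^m(X,\mZ)$ with $m\ge 5$. Second, writing $d:=\dim X<\infty$, express $X=\varprojlim X_j$ with $X_j$ finite simplicial complexes of dimension $\le d$ (nerves of a cofinal family of finite open covers of order $\le d+1$), with canonical projections $\pi_j$ and bonding maps $f_{jk}$; by Lemma~\ref{proposition_25_may_2022_1510}(2), $E\cong\pi_j^{*}E_j$ for some $j$ and some bundle $E_j$ over $X_j$, and by part~(3) of that lemma together with the mod-$m$ version from the first step, after replacing $j$ by a larger index we may assume $c_1(E_j)=0$ and $c_2(E_j)=0$ in $H^{*}(X_j,\mZ)$. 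Third, classify $E_j$ by a map $X_j\to BU(r)$, $r:=\mathrm{rk}\,E_j$; since $X_j$ is a finite complex of dimension $\le d$, this map is null-homotopic if and only if the successive obstructions $o_n\in H^n\!\big(X_j,\pi_n(BU(r))\big)$, $2\le n\le d$, vanish. Here $\pi_1(BU(r))=\pi_3(BU(r))=0$ because $U(r)$ is a connected Lie group; $\pi_2(BU(r))=\mZ$ and $o_2=\pm c_1(E_j)=0$; $o_4\in H^4(X_j,\mZ)$ is well defined (its indeterminacy lies in $H^3(X_j,0)=0$) and, once $c_1(E_j)=0$, equals $\pm c_2(E_j)=0$ (for $r=1$ there is no obstruction beyond $c_1$); and for $n\ge 5$ the group $\pi_n(BU(r))=\pi_{n-1}(U(r))$ is finitely generated abelian, so by the first step $\varinjlim_k H^n\!\big(X_k,\pi_n(BU(r))\big)=H^n\!\big(X,\pi_n(BU(r))\big)=0$. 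Running the finitely many remaining stages $n=5,\dots,d$ of the Postnikov tower of $BU(r)$ and passing far enough along the inverse system at each stage, one obtains an index $k\ge j$ with $f_{jk}^{*}E_j$ trivial over $X_k$, whence $E\cong\pi_k^{*}f_{jk}^{*}E_j$ is trivial.

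The main obstacle I anticipate is the bookkeeping in this last step: one has to run the inverse-limit obstruction argument — of the type underlying Lemma~\ref{proposition_25_may_2022_1510} (cf.\ \cite[Lemmas~1,~2]{Lin}) — through all the stages $2\le n\le d$ simultaneously, controlling the indeterminacy of each obstruction class as one enlarges the index $k$, so that the eventual vanishing of the higher obstructions (which holds \emph{in the limit}, not over any single $X_j$) can actually be realised at a finite stage. A smaller but genuine point is the verification, in the universal example, that the degree-$4$ obstruction coincides with $\pm c_2$ once $c_1$ has been trivialised, and that there is no odd-degree obstruction below degree $5$ — both of which rest on $\pi_3(BU(r))=0$ and on the structure of $H^{*}(BU(r),\mZ)$ through degree $4$.
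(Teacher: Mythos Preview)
Your argument is correct and shares the paper's overall architecture --- reduce to a single $X_i$ via Proposition~\ref{5_June_2022_9:00}, approximate by nerves, use universal coefficients to kill $H^n(\,\cdot\,,G)$ for $n\ge 5$, and then show the classifying map is null-homotopic --- but the execution of the central step differs in an instructive way. The paper does not identify the degree-$2$ and degree-$4$ obstructions with $c_1,c_2$; instead it restricts $E_\alpha$ to the $4$-skeleton $A_\alpha^4$, splits off a trivial summand to reduce to rank $2$, and invokes the classification of rank-$2$ bundles on $4$-complexes \cite[Problem~14-C]{MilSta} to get triviality on $A_\alpha^4$ directly. This yields a classifying map $h:A_\alpha\to\textrm{\bf Gr}_n(\mC^{n+k})$ constant near $A_\alpha^4$, whence $f_0=h\circ\phi_\alpha$ and the constant map $f_1$ are $4$-homotopic in Hu's sense; the paper then applies Hu's theorem \cite[Theorem~8.1]{Hu} \emph{directly to $X_i$} to conclude $f_0\homeq f_1$. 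The point is that Hu's theorem is stated for normal spaces and bridge maps, so it absorbs in one stroke exactly the inverse-limit bookkeeping you flag as the main obstacle: you never have to pass further along the system to kill each $o_n$ for $n\ge 5$. Your route, running Postnikov obstructions for $BU(r)$ on the nerves and enlarging the index finitely many times, is equally valid and perhaps more familiar to readers who have not seen \cite{Hu0}; its cost is precisely the bookkeeping you anticipate, together with the (standard but not entirely trivial) identification $o_4=\pm c_2$ once $c_1$ has been killed --- which the paper sidesteps via the rank-$2$/Milnor--Stasheff manoeuvre.
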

\begin{proof}
In the proof, we use the fact that the rank of a stably trivial complex vector bundle or a complex vector bundle over a connected space is well-defined.
 Due to Proposition \ref{5_June_2022_9:00},
 it suffices to prove the result for each space $X_i$. 

 In what follows, we use the terminology from \cite{Hu0}. 
 So $A_\alpha$ denotes the nerve of a finite open covering $\alpha=\{U_1,U_2,\dots,U_r\}$ of $X_i$
 and $ \phi_\alpha: X_i\rightarrow A _\alpha$ denotes a canonical map of $\alpha$, i.e., 
 for each point $x\in X_i$, $\phi_\alpha(x)$ is contained in the closure of the simplex  
 $[U_{i_0},U_{i_1},\dots,U_{i_n}]$ of $A_\alpha$, where $U_{i_0},U_{i_1},\dots,U_{i_n}$ 
 denote the members of $\alpha$ containing $x$.  
 
 Given a continuous map $f: X_i\rightarrow Y$, 
 a continuous map $\psi_\alpha : A_\alpha\rightarrow Y$ is called a {\em bridge} if 
 $\psi_\alpha\circ\phi_\alpha\homeq f$ 
 for each canonical map 
 $\phi_\alpha: X_i\rightarrow A_\alpha$ of the covering $ \alpha$.  
The family $(A_\alpha)_{\alpha}$, together with the natural simplicial projection maps 
$p_\alpha^\beta: A_\beta\rightarrow A_\alpha$,  where $\beta$ is a finite refinement of $\alpha$, forms a projective system with  limit
$X_i$. 

 Let $E_i$ be a  stably trivial  complex  vector bundle  over $X_i$. 
Applying  Proposition~\ref{proposition_25_may_2022_1510}\,(2),
we obtain  for some $\alpha$, a stably trivial bundle $ E_\alpha$ over $A_\alpha$  such that 
$\phi_\alpha^*(  E_\alpha)\cong  E_i$;
in particular its Chern classes $c_k( E_\alpha)=0\, (\in H^{2k}(A_\alpha,\mZ))$, $k\in\mN$. If  $E_\alpha$ is of complex rank one, then the latter implies that $E_i$ is trivial. 
Next, suppose $E_\alpha$ has complex rank $n\geq 2$.  Let $A_\alpha^4$ be the $4^{\textrm{th}}$ skeleton of 
$A_\alpha$, that is, the totality of the simplexes of $A_\alpha$ with dimensions not exceeding $4$. Consider the bundle $ E':=  E_\alpha|_{A_\alpha^4}$. Since  the rank $n$ of $E_i $ is $\ge 2$, 
$E'$ is isomorphic to the Whitney sum $E^{\prime\prime} \oplus\Omega^{n-2}$, where $\Omega^{n-2}:=A_\alpha^4\times\mC^{n-2},$ 
  and $E^{\prime \prime}$ is a rank $2$ complex vector bundle over $A_\alpha^4$  (see \cite[Part~II, Chap.~9, Theorem~1.2]{Hus}). Since the Chern classes of $E' $ are zeros, the latter implies that Chern classes of $E^{\prime\prime}$ are zeros as well. In turn, 
$A_\alpha^4$ is four-dimensional, and therefore the vanishing of Chern classes of $E^{\prime \prime}$ implies that $E^{\prime\prime}$ is trivial (it follows, e.g., from \cite[Problem 14-C, p.171]{MilSta}).  
Hence, the bundle $E_\alpha|_{A_\alpha^4}=E'=E^{\prime\prime} \oplus\Omega^{n-2}$ is   trivial as well. In particular, due to the construction of \cite[Part~I, Theorem~3.5.5]{Hus}, there is a continuous map into a complex Grassmanian, $h: A_\alpha\rightarrow {\textrm{\bf  Gr}}_n(\mC^{n+k})$, constant in a neighbourhood of $A_\alpha ^4$,  such that
$h^*(\gamma_{n,n+k})\cong E_\alpha$, 
where $\gamma_{n,n+k}$ is the tautological bundle over $\textrm{\bf Gr}_n(\mC^{n+k})$.  Let the value of $h$ on $A_\alpha^4$ be $x$.
Consider the continuous maps $f_0 :=h \circ\phi_\alpha: X_i\rightarrow {\textrm{\bf  Gr}}_n(\mC^{n+k})$ and 
$f_1: X_i\rightarrow {\textrm{\bf  Gr}}_n(\mC^{n+k})$ of constant value $x$. 
According to the definition of $n$-homotopy\footnote{Let $X$ be a normal space, $Y$ be a connected ANR, and either $X$ or $Y$ be compact. 
Continuous maps $f_0,f_1:X\rightarrow Y$ are {\em $n$-homotopic} if there exists a bridge $\alpha$ for the pair $(f_0,f_1)$ with bridge map $\psi_i:A_\alpha\rightarrow Y$ for $f_i$ ($i=0,1$) such that $\psi_0=\psi_1$ on the $n$-dimensional skeleton $A_\alpha^n$ of the  nerve $A_\alpha$ of the covering $\alpha$ of $X$.} in \cite[\S6]{Hu0}, $f_0$ and $f_1$ are $4$-homotopic. Moreover, by the hypotheses we have $H^s(X_i,\mZ)=0$ for all $s\ge 5$ which implies that $H^s(X,G)=0$ for all $s\ge 5$, for  any abelian group $G$ (by the Universal Coefficient Theorem for \v{C}ech cohomology \cite[Chap.~6, \S8, Theorem~10]{Spa}).
In particular, $H^s(X,\pi_s)=0$ for all $s\ge 5$ where $\pi_s$ is the $s^{\textrm{th}}$ homotopy group of 
${\textrm{\bf  Gr}}_n(\mC^{k+n})$.
Since the latter is a simply connected  manifold, it is $r$-simple for all $r\ge 1$. Hence, the previous facts based on \cite[Theorem~8.1]{Hu} imply that $f_0$ and $f_1$ are homotopic maps. Thus $f_0^*(\gamma_{n,n+k})$ and $f_1^*(\gamma_{n,n+k})$ are
isomorphic bundles over $X_i$. But the former is isomorphic to the original bundle $E_i$ by our construction, and the latter
is trivial. This proves that $E_i$ is  trivial. Thus every stably trivial complex vector bundle over $C(X_i)$ is trivial. Hence, all $C(X_i)$, and therefore $C(X)$, are Hermite due to Proposition~\ref{5_June_2022_9:00}.

Now, assume in addition that $X_i$ is connected and $H^2(X_i,\mZ)=H^4(X_i,\mZ)=0$. 
Let $E_i$ be a finite rank complex  vector bundle  over $X_i$. Then as above, there exist $\alpha$ and a complex vector bundle $ E_\alpha$ over $A_\alpha$,  such that 
$\phi_\alpha^*(  E_\alpha)\cong  E_i$. 
Due to Lemma~\ref{proposition_25_may_2022_1510}~(3), we can choose the $\alpha$ such that the Chern classes $c_i( E_\alpha)=0\, (\in H^{2i}(A_\alpha,\mZ))$, $i=1,2$. 
Hence, the complex vector bundle $E':=  E_\alpha|_{A_\alpha^4}$ is trivial. 
So arguing as above, we obtain that the bundle $E_i$ is trivial.
Thus every finite rank complex vector bundle over $C(X_i)$ is trivial. Hence, all $C(X_i)$, and therefore $C(X)$, are projective free by Proposition ~\ref{5_June_2022_9:00}.
\end{proof}

We deduce from the theorem the following result.

\begin{theorem}\label{teo2.9}
Let $X$ be a Hausdorff paracompact space of finite covering dimension such that $H^n(X,\mZ)=0$ for all $n\ge 5$. Then the algebra $C(X)$ is Hermite. If$,$ in addition$,$ $X$ is connected and $H^2(X,\mZ)=H^4(X,\mZ)=0,$ then $C(X)$ is projective free.
\end{theorem}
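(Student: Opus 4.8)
The plan is to deduce this from Theorem~\ref{teo2.8} by passing to the Stone--\v{C}ech compactification $\beta X$. Since $C_b(X)$ is isometrically isomorphic to $C(\beta X)$, Propositions~\ref{prop2.1} and~\ref{prop2.3} show that $C(X)$ is projective free (respectively, Hermite) if and only if $C(\beta X)$ is, and $\beta X$ is a nonempty compact Hausdorff space (assuming $X\ne\emptyset$; the empty case is trivial). Thus it suffices to check that $\beta X$ satisfies the hypotheses that Theorem~\ref{teo2.8} imposes on the spaces $X_i$, and then apply that theorem to the constant projective system $X_i=\beta X$ (whose limit is $\beta X$) to conclude that $C(\beta X)$, hence $C(X)$, is Hermite, and projective free under the extra hypotheses. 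Concretely, I need: $\beta X$ finite-dimensional; $H^n(\beta X,\mZ)=0$ for all $n\ge 5$; and, for the second assertion, $\beta X$ connected with $H^2(\beta X,\mZ)=H^4(\beta X,\mZ)=0$.

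Two of these are essentially formal. A paracompact Hausdorff space is normal, so $\dim\beta X=\dim X<\infty$ by the classical equality of the covering dimensions of a normal space and of its Stone--\v{C}ech compactification; and $\beta X$ is connected whenever $X$ is, being the closure of the dense subspace $X$. The remaining task is to carry the cohomology vanishing over from $X$ to $\beta X$. For this I would use that, for a normal space $X$, the inclusion $X\hookrightarrow\beta X$ induces isomorphisms $H^n(\beta X,\mZ)\cong H^n(X,\mZ)$ in every degree: the finite open covers of the compact space $\beta X$ restrict to finite open covers of $X$ with the same nerve (since $X$ is dense, a finite intersection of members of such a cover is nonempty in $\beta X$ exactly when it meets $X$), and these restricted covers are cofinal among all finite open covers of $X$, because a finite partition of unity subordinate to a given finite open cover of $X$ --- available as $X$ is normal --- extends to $\beta X$ and yields a finite open cover of $\beta X$ restricting to a refinement of the given one. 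Hence $H^n(\beta X,\mZ)$ is the colimit of the $H^n$ of the nerves over finite open covers of $X$, and the hypotheses $H^n(X,\mZ)=0$ for $n\ge 5$ and $H^2(X,\mZ)=H^4(X,\mZ)=0$ give the corresponding vanishing for $\beta X$. (When $\dim X\le 4$ the vanishing in degrees $\ge 5$ is automatic anyway, since covering dimension bounds \v{C}ech cohomological dimension, so the cohomological hypothesis only does work once $\dim X\ge 5$.)

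Granting these comparisons, Theorem~\ref{teo2.8} applies to $\beta X$ and, together with Propositions~\ref{prop2.1} and~\ref{prop2.3}, yields both assertions. I expect the only genuinely non-formal point to be this transfer of covering dimension and \v{C}ech cohomology from $X$ to $\beta X$ --- in particular, being precise about which \v{C}ech cohomology is meant on the non-compact space $X$ (the one built from finite open covers, which is the one matching $H^\bullet(\beta X,\mZ)$), so that the comparison is an honest isomorphism. With that settled, the rest is a direct invocation of results already established.
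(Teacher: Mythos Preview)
Your approach is essentially the paper's: reduce to $\beta X$ via Propositions~\ref{prop2.1} and~\ref{prop2.3}, transfer the dimension and cohomology hypotheses from $X$ to $\beta X$ using normality, and invoke Theorem~\ref{teo2.8}. The one point where the paper is more explicit is exactly the one you flag as non-formal: the hypothesis $H^n(X,\mZ)=0$ refers to ordinary \v{C}ech cohomology (over all open covers, equivalently numerable covers for paracompact $X$), whereas your nerve argument only yields $H^n(\beta X,\mZ)\cong H_f^n(X,\mZ)$, the \v{C}ech cohomology built from \emph{finite} open covers of $X$. The paper closes this by citing Calder--Siegel \cite{CadSie}, which gives $H_f^n(X,\mZ)\cong H^n(X,\mZ)$ for paracompact $X$ in all degrees $n\ge 2$; with that identification supplied, your argument is complete and matches the paper's.
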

\begin{proof} 
Due to the Dieudonn\'e  theorem, a Hausdorff paracompact space is normal; see, e.g., \cite[Theorem~5.1.5]{Eng}.  So the embedding $X\hookrightarrow \beta X$ induces a natural isomorphism $H_f^*(X,\mZ)\cong H^*(\beta X,\mZ)$; 
here $H_f^*(X,\mZ)$ are \v{C}ech cohomology groups of $X$ 
defined with respect to finite open coverings, see \cite[p.\,282]{EilSte}. 
Further, for a paracompact space $X$, the groups $H^*(X,\mZ)$ coincide with the 
\v{C}ech cohomology groups of $X$ defined with respect to numerable open coverings. 
Thus, according to \cite[Corollary~(6.3)]{CadSie}, the   
group $H_f^n(X,\mZ)$ coincides with the \v{C}ech cohomology group 
$H^n(X,\mZ)$ for all $n\ge 2$. These facts and our hypotheses imply that 
$H^n(\beta X,\mZ)=0$ for all $n\ge 5$. Moreover, since $X$ is normal, 
$\dim \beta X=\dim X$, and  
in particular,  $\beta X$ is finite-dimensional; see, e.g., \cite[Theorem~7.1.17, p.390]{Eng}.
Thus, applying Theorem \ref{teo2.8} to $\beta X$, we obtain that the algebra $C(\beta X)$ is Hermite. Then Proposition \ref{prop2.3} implies that the algebra $C(X)$ is Hermite as well.

In the second case, the previous argument implies that  $H^2(\beta X,\mZ)=H^4(\beta X,\mZ)=0$ also. Moreover, if $X$ is connected, then $\beta X$ is connected as well. Then, under these additional assumptions, Theorem~\ref{teo2.8} implies that the algebra $C(\beta X)$ is projective free. Therefore due to Proposition~\ref{prop2.1}, the algebra $C(X)$ is projective free as well, as required.
\end{proof}

\begin{example}\label{ex2.10}
Let $X\subset\mR^5$ be a closed subset. Then  $H^{n}(X,\mZ)=0$ for all $n\ge 5$. (In fact, the 
\v{C}ech cohomology groups of a closed subset of $\mR^m$ are isomorphic to the injective limit of \v{C}ech cohomology groups of its open neighbourhoods; see, e.g., \cite[Chap.~6, \S1, Theorem~12, \S8, Corollary~8]{Spa}. Also, by a result due to Whitehead \cite[Theorem~3.2]{Whi},
an open subset $U$ of $\mR^m$ is homotopy equivalent to an $m-1$ dimensional simplicial complex $\Gamma\subset U$. As the dimension of $\Gamma$ is $m-1$, $H^n(U,\mZ)=H^n(\Gamma,\mZ)=0$ for all $n\ge m-1$.) Thus,  due to Theorem \ref{teo2.9}, the algebra $C(X)$  is Hermite. 
\end{example}

In connection with Theorems \ref{teo2.8}, \ref{teo2.9} the following question seems quite natural:

\begin{question}
Is there a Hausdorff topological space $X$ with $H^n(X,\mZ)\ne 0$ 
for some $n\ge 5$ such that the algebra $C(X)$ is Hermite?
\end{question}

\section{Stein Algebras}
\label{section_O(X)}

For basic facts about complex analytic spaces and Stein spaces we refer the readers to the book \cite{GraRem}. 

Let $\Gamma(X,\mathcal O_X)$ be the ring of global sections of the structure sheaf $\mathcal O_X$ on a finite-dimensional complex analytic space $(X, \mathcal O_X )$.  There is a natural algebra homomorphism $\,\hat{\,} : \Gamma(X,\mathcal O_X)\rightarrow C(X)$ with image $\mathcal O(X)$, the ring of holomorphic functions on $X$, injective if $(X, \mathcal O_X)$ is reduced.  A space $(X, \mathcal O_X )$ is said to be {\em Stein} if it is {\em holomorphically convex} (i.e., for each infinite discrete set $D\subset X$ there exists an $f\in\mathcal O(X)$ which is unbounded on $D$) and  {\em holomorphic separable} (i.e., for all $x,y\in X$, $x\ne y$, there exists an $f\in \mathcal O(X)$ such that $f(x)\ne f(y)$).

By the Cartan and Oka theorem, the nilradical $\mathfrak{n}(\mathcal O_X)$ of $\mathcal O_X$ (i.e., the union of nilradicals of stalks $\mathcal O_x$, $x\in X$)
 is a coherent sheaf of ideals on $X$ and so if $(X,\mathcal O_X)$ is Stein, then by Cartan's Theorem B  we have the following exact sequence of global sections of sheaves
 \begin{equation}
0\rightarrow \Gamma(X,\mathfrak{n}(\mathcal O_X))\rightarrow \Gamma(X,\mathcal O_X)\stackrel{r^*}{\rightarrow}\Gamma(X,\mathcal O_{{\rm red}\, X})\rightarrow 0,
\end{equation}
where 
$\mathcal O_{{\rm red}\, X}:=\mathcal O_X/\mathfrak{n}(\mathcal O_X)$ is the structure sheaf on the reduction of $X$. It easily seen that 
$\Gamma(X,\mathfrak{n}(\mathcal O_X))$ is the {\em Jacobson radical} of  $\Gamma(X,\mathcal O_X)$, i.e., the intersection of all maximal ideals of $\Gamma(X,\mathcal O_X)$; 
see, e.g., \cite[\S1.4]{For0}. Moreover, the algebra $\Gamma(X,\mathcal O_X)$ is $\Gamma(X,\mathfrak{n}(\mathcal O_X))$-{\em complete}, i.e., the natural homomorphism from $\Gamma(X,\mathcal O_X)$ to 
the projective limit of quotient algebras $\varprojlim \Gamma(X,\mathcal O_X)\big/\Gamma(X,\mathfrak{n}(\mathcal O_X))^N$ is an isomorphism\footnote{Equivalently, in the topology on $\Gamma(X,\mathcal O_X)$ determined by letting the family of ideals $\left\{\Gamma(X,\mathfrak{n}(\mathcal O_X)) ^N\right\}_{N\in\mN}$ be a base of open neighbourhoods of $0$, every Cauchy sequence converges to a unique limit.}, see, e.g., 
 \cite[Ch. V, \,\S4.3]{GraRem}.

\begin{theorem}\label{teo2.10}
Let  $(X,\mathcal O_X)$ be a finite-dimensional Stein space. The homomorphism $\,\hat{\,}:\Gamma(X,\mathcal O_X)\rightarrow C(X)$ induces a bijection between isomorphism classes of finitely generated projective $\Gamma(X,\mathcal O_X)$ and $C(X)$ modules.
\end{theorem}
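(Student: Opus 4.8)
The plan is to reduce the statement to two separate facts: first, that the Stein algebra $\Gamma(X,\mathcal O_X)$ and its reduction $\Gamma(X,\mathcal O_{{\rm red}\,X})$ have the same isomorphism classes of finitely generated projective modules (handled via the Jacobson radical and the completeness property quoted above); and second, that for the \emph{reduced} Stein space $X_{\rm red}$, the homomorphism $\mathcal O(X_{\rm red})\hookrightarrow C(X)$ induces a bijection on isomorphism classes of finitely generated projectives. I would treat each in turn.

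For the first reduction, I would invoke the classical fact (going back to work on idempotents over rings complete with respect to an ideal contained in the Jacobson radical) that if $I\subseteq\mathrm{rad}(R)$ and $R$ is $I$-complete, then idempotents lift uniquely modulo $I$ and, more precisely, the reduction functor $M\mapsto M/IM$ induces a bijection between isomorphism classes of finitely generated projective $R$-modules and those of $R/I$-modules. In our situation $R=\Gamma(X,\mathcal O_X)$, $I=\Gamma(X,\mathfrak n(\mathcal O_X))$, and both the ``$I\subseteq\mathrm{rad}(R)$'' and the ``$I$-completeness'' hypotheses were recorded explicitly in the paragraph preceding the theorem, with $R/I\cong\Gamma(X,\mathcal O_{{\rm red}\,X})$ by the exact sequence (3.1). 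Concretely one uses the idempotent-matrix description from \cite[Proposition~2.6]{Coh0}: an idempotent over $R/I$ lifts to an idempotent over $R$ because $R$ is $I$-adically complete, and two lifted idempotents that are conjugate over $R/I$ are conjugate over $R$ (again lifting the conjugating invertible matrix through the complete local-type situation). This gives the bijection $\mathrm{Proj}(\Gamma(X,\mathcal O_X))\leftrightarrow\mathrm{Proj}(\Gamma(X,\mathcal O_{{\rm red}\,X}))$, compatibly with the map to $\mathrm{Proj}(C(X))$ since $C(X)=C(X_{\rm red})$ as topological spaces coincide set-theoretically under reduction.

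For the second step, with $X$ now reduced, I would use the Swan/Vaserstein–type dictionary already set up in \S2 together with the Oka principle. On the holomorphic side, finitely generated projective $\mathcal O(X)$-modules over a reduced Stein space correspond to holomorphic vector bundles on $X$ (by a Serre–Swan statement for Stein spaces, using Cartan's Theorems A and B so that projective modules are locally free and globally direct summands of free ones). On the continuous side, finitely generated projective $C(X)$-modules correspond to finite type complex vector bundles on $X$. The Grauert Oka principle \cite{Gra57},\cite{Gra58} asserts that the natural map from isomorphism classes of holomorphic vector bundles on a (finite-dimensional) Stein space to isomorphism classes of topological complex vector bundles is a bijection; and on a finite-dimensional Stein space every topological vector bundle is of finite type (the space has finite covering dimension, hence admits a finite trivialising cover, or one appeals directly to the finite-type structure coming from a Stein exhaustion). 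Chasing the two Serre–Swan equivalences through the Oka-principle bijection yields exactly that $\,\hat{\ }\,$ induces a bijection $\mathrm{Proj}(\mathcal O(X))\leftrightarrow\mathrm{Proj}(C(X))$, and composing with the first step completes the proof.

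The main obstacle I anticipate is the careful bookkeeping in the first reduction: one must verify not merely that idempotents lift but that the lifting is compatible with conjugation, i.e., that the map on \emph{isomorphism classes} (not just on idempotent matrices) is a bijection, and that the $I$-adic completeness quoted in the footnote is genuinely strong enough for this — it is, because $I\subseteq\mathrm{rad}(R)$ together with $I$-completeness is precisely the hypothesis under which the standard ``lifting idempotents'' machinery gives an equivalence of projective-module categories, but one should cite this cleanly rather than re-proving it. A secondary point requiring a word of justification is that every topological vector bundle on a finite-dimensional Stein space is of finite type, so that the $C(X)$-side of the dictionary from \S2 applies without the ``finite type'' caveat causing a gap.
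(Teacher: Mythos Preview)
Your proposal is correct and follows essentially the same two-step strategy as the paper: first reduce to the reduced Stein space via $I$-adic completeness of $\Gamma(X,\mathcal O_X)$ with $I=\Gamma(X,\mathfrak n(\mathcal O_X))$ (the paper cites \cite[Theorem~2.26]{Swa68} for this rather than re-deriving the idempotent lifting), then combine the Serre--Swan theorem for Stein algebras (the paper cites Forster \cite{For1} and Morye \cite{Mor}), Grauert's Oka principle, and the Swan--Vaserstein theorem for $C(X)$. The only detail the paper makes more explicit is that the holomorphic and continuous Serre--Swan bijections are with bundles \emph{of bounded rank}, and that on a Hausdorff paracompact space of finite covering dimension ``bounded rank'' and ``finite type'' coincide, which closes the gap you flagged in your last paragraph.
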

\begin{proof} 
Since the algebra $\Gamma(X,\mathcal O_X)$ is $\Gamma(X,\mathfrak{n}(\mathcal O_X))$-{\em complete}, the correspondence
\begin{equation}
\label{29_6_2022_1822}
P\cong\Gamma(X,\mathcal O_X)\otimes_{\Gamma(X,\mathcal O_X)}\!P\stackrel{r^*\otimes {\rm id}_P}{\xrightarrow{\hspace*{07mm}}} \Gamma(X,\mathcal O_{{\rm red}\, X})\otimes_{\Gamma(X,\mathcal O_X)}\!P 
\end{equation}
determines a bijection between isomorphism classes of finitely generated projective $\Gamma(X,\mathcal O_X)$  modules and finitely generated projective $\Gamma(X,\mathcal O_{{\rm red}\, X})$ modules, see, e.g., \cite[Theorem 2.26]{Swa68}.

For the reduced Stein space $(X,\mathcal O_{{\rm red}\, X})$, the algebra $\Gamma(X,\mathcal O_{{\rm red}\, X})$ can be naturally identified with $\mathcal O(X)$.  
Then it follows from \cite[S\"{a}tze 6.7,\,6.8]{For1} (see also \cite[Theorem~2.1]{Mor})  that
there is a bijection between isomorphism classes of finitely generated projective $\mathcal O(X)$ modules and isomorphism classes of holomorphic vector bundles over $X$ of bounded rank.\footnote{I.e., the complex ranks of restrictions of such bundles to connected components of $X$ are uniformly bounded from above.} 
 Moreover, according to the Grauert theorem (see  \cite{Gra57}, \cite{Gra58} and \cite{Car}) the inclusion of sheaves $i:\mathcal O_{{\rm red}\, X}\hookrightarrow C_X$ (the sheaf of germs of continuous functions on $X$) induces a bijection between isomorphism classes of holomorphic and continuous complex vector bundles over $X$ of bounded rank. Next, since $X$ is a Hausdorff paracompact of finite covering dimension (by the definition of a finite-dimensional complex analytic space), each continuous complex vector bundle over $X$ is of bounded rank if and only it is of finite type (see, e.g., 
 \cite[Ch.\,3, Proposition\,5.4]{Hus}) 
 and, hence, by Swan's theorem (see \cite[Theorem~2]{Vas}), there is a bijection between isomorphism classes of continuous complex vector bundles over $X$ of bounded rank and of finitely generated projective $C(X)$ modules.  This implies that the correspondence
 \begin{equation}
\label{29_6_2022_1823}
P\cong \mathcal O(X)\otimes_{\mathcal O(X)}\!P\stackrel{i \otimes {\rm id}_P}{\xrightarrow{\hspace*{07mm}}} C(X)\otimes_{\mathcal O(X)}\!P 
\end{equation}
determines a bijection between isomorphism classes of finitely generated projective $\mathcal O(X)$ modules and finitely generated projective $C(X)$ modules.
 
 The composition of the  bijections in \eqref{29_6_2022_1822} and \eqref{29_6_2022_1823} gives the required statement: the correspondence
 \begin{equation}
P\cong\Gamma(X,\mathcal O_X)\otimes_{\Gamma(X,\mathcal O_X)}\!P\stackrel{\,\hat{\,}\,\otimes {\rm id}_P} {\xrightarrow{\hspace*{07mm}}} C(X)\otimes_{{\Gamma(X,\mathcal O_X)} }\!P 
\end{equation}
 determines a bijection between isomorphism classes of finitely generated projective $\Gamma(X,\mathcal O_X)$ modules and  finitely generated projective $C(X)$ modules.
\end{proof}

Theorems \ref{teo2.10} and \ref{teo2.9} imply the following:

\begin{theorem}\label{teo3.2}
The algebra $\Gamma(X,\mathcal O_X)$ is projective free or Hermite 
if and only if the algebra $C(X)$ is projective free or Hermite. \\
 In particular, if $H^n(X,\mZ)=0$ for all $n\geq 5$, then $\Gamma(X,\mathcal{O}_X)$ is Hermite, and if, in addition, $X$ is connected and $H^2(X,\mZ)=H^4(X,\mZ)=0$, then it is projective free.
\end{theorem}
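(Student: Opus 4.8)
The plan is to combine Theorem~\ref{teo2.10} with Theorem~\ref{teo2.9} in a purely formal way. First I would observe that projective freeness and Hermiteness are properties that depend only on the isomorphism classes of finitely generated projective modules over the ring together with the module operations $\oplus$ and the distinguished free modules $R^n$; more precisely, $R$ is projective free iff every finitely generated projective $R$-module is isomorphic to some $R^m$, and $R$ is Hermite iff every finitely generated projective $R$-module $P$ with $P\oplus R^k\cong R^n$ satisfies $P\cong R^m$. Theorem~\ref{teo2.10} gives a bijection $\Phi$ between isomorphism classes of finitely generated projective $\Gamma(X,\mathcal O_X)$-modules and those over $C(X)$, induced by the ring homomorphism $\,\hat{\,}$; hence I would check that $\Phi$ respects this extra structure: it carries free modules to free modules of the same rank (since $\,\hat{\,}$ is unital, so $\Gamma(X,\mathcal O_X)^n$ maps to $C(X)^n$), and it commutes with direct sums (base change along a ring map is additive). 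Consequently, $\Gamma(X,\mathcal O_X)$ is projective free (resp.\ Hermite) if and only if $C(X)$ is projective free (resp.\ Hermite).

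Next, for the "in particular" clause I would simply invoke Theorem~\ref{teo2.9}. A finite-dimensional complex analytic space $X$ is, by definition, Hausdorff, paracompact, and of finite covering dimension, so the hypotheses of Theorem~\ref{teo2.9} apply verbatim to the underlying topological space of $X$. Thus $H^n(X,\mZ)=0$ for all $n\ge 5$ implies $C(X)$ is Hermite, hence $\Gamma(X,\mathcal O_X)$ is Hermite by the first part; and the additional vanishing $H^2(X,\mZ)=H^4(X,\mZ)=0$ together with connectedness of $X$ gives that $C(X)$, hence $\Gamma(X,\mathcal O_X)$, is projective free.

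I expect there to be essentially no obstacle: the proof is a two-line combination of previously established results. The only point requiring a word of care is the claim that the bijection of Theorem~\ref{teo2.10} preserves the free modules and the operation $\oplus$, so that the abstract properties "projective free" and "Hermite" transfer across it; this is immediate from the fact that the bijection is realized by the base-change functor $-\otimes_{\Gamma(X,\mathcal O_X)}C(X)$ along the unital homomorphism $\,\hat{\,}$, which is additive and sends the rank-$n$ free module to the rank-$n$ free module. One should also note that the covering dimension and paracompactness of $X$ used in Theorem~\ref{teo2.9} are exactly the hypotheses already recorded for finite-dimensional complex analytic spaces in \S\ref{section_O(X)}, so no new topological input is needed.
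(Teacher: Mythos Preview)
Your proposal is correct and follows essentially the same approach as the paper: the authors simply state that the result follows from Theorems~\ref{teo2.10} and~\ref{teo2.9}. Your added remark that the bijection of Theorem~\ref{teo2.10} is induced by base change along the unital homomorphism~$\hat{\,}$, hence preserves direct sums and free modules of given rank, is a useful piece of explicitness that the paper leaves implicit.
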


\begin{example}$\;$
\label{ex3.3}

\noindent (1) According to \cite{Ham}, 
 a reduced Stein space $X$ of (complex) dimension $k$ is homotopy equivalent to a $k$-dimensional $CW$ complex. Hence, $H^n(X,\mZ)=0$ for all $n>k$. Thus, due to Theorem~\ref{teo3.2} if  $X$ is of dimension $\le 4$, then $\mathcal O(X)$ is Hermite, and if $X$ is one-dimensional and connected, then $\mathcal O(X)$ is projective free.
 
 \smallskip 
 
 \noindent (2) Let $U$  be an open subset of a Stein manifold $X$. Equipped with the topology of uniform convergence on compact subsets of $U$, the algebra  $\mathcal{O}(U)$ becomes a complex Fr\'{e}chet space.  Each nonzero homomorphism  $\mathcal{O}(U)\to\mC$ is an element of the dual space $\mathcal{O}(U)^*$ (see, e.g., \cite[Chap.~5, \S7.1]{GraRem}).
The space of such homomorphisms equipped with the weak-$*$ topology of $\mathcal{O}(U)^*$ is denoted by $M(\mathcal{O}(U))$.  If $f\in \mathcal{O}(U)$, then  $\hat{f}\in C(M(\mathcal{O}(U)))$ is defined by $\hat{f}(\alpha)=\alpha(f)$ for each $\alpha 
 \in M(\mathcal{O}(U))$.
 
Since $X$ is Stein, $M(\mathcal{O}(X))=X$ (see, e.g., \cite[Chap.~5, \S7]{GraRem}), so we have the natural restriction map
$\pi_U: M(\mathcal{O}(U))\rightarrow X$ given by $\pi_U(\alpha)(f)=\alpha(f|_U)$. Rossi \cite{Ros63} has shown that $M(\mathcal{O}(U))$ admits the structure of a Stein manifold in such a way that: (i) the map $U\rightarrow M(\mathcal{O}(U))$ sending $z\in U$ to the evaluation homomorphism at $z$ is a biholomorphism of $U$ with an open
subset of $M(\mathcal{O}(U))$ (we will regard $U$ as an open subset of $M(\mathcal{O}(U))$); (ii) if $f\in \mathcal{O}(U)$, then $\hat{f}$ is the unique holomorphic extension of $f$ to $M(\mathcal{O}(U))$ (so that $\mathcal{O}(M(\mathcal{O}(U)))\cong\mathcal{O}(U))$; (iii) $\pi_U$ is locally a biholomorphism. This and Theorem \ref{teo3.2} imply that \smallskip

\noindent $\bullet$ {\em if $X$ is of dimension $\le 4$, then the algebra $\mathcal O(U)$ is Hermite}.

\smallskip

Assume, in addition, that the set $U$ is holomorphically contractible (e.g., $X=\mC^k$ and $U\subset X$ is a star-shaped domain), then \smallskip

\noindent $\bullet$ {\em the algebra $\mathcal O(U)$ is projective free}. \smallskip

Indeed, let the holomorphic contraction be given by a continuous map $H: U\times [0,1]\to U$, so that  $H(\cdot,1)={\rm id}_U$, $H(\cdot, 0)=z_o\in U$, and $H(\cdot,t): U\to U$ is holomorphic for all $t\in [0,1]$. Then $H$ determines the map $H^*$ from $[0,1]$ to the set of homomorphisms $\mathcal O(U)\to \mathcal O(U)$ given by $(H^*(t))(f)=f(H(\cdot, t))$.  The transpose of each $H^*(t)$ induces a holomorphic map $\hat H(\cdot, t):M(\mathcal O(U))\to M(\mathcal O(U))$ such that $\hat H(\cdot, t)|_U=H(\cdot, t)$. Let us show that $\hat H: M(\mathcal O(U))\times [0,1]\to M(\mathcal O(U))$ is continuous. To this end, let $\{(z_n,t_n)\}_{n\in\mN}\subset M(\mathcal O(U))\times [0,1]$ be a sequence converging to $(z,t)\in M(\mathcal O(U))\times [0,1]$. For each $f\in\mathcal O(U)$, 
\[
{\scaleobj{0.97}{
\begin{array}{l}
\displaystyle
\varlimsup\limits_{n\to\infty}|(\hat H(z_n,t_n)-\hat H(z,t))(f)| \\[0.3cm]
\le\! \varlimsup\limits_{n\to\infty} |(\hat H(z_n,t_n)-\hat H(z_n,t))(f)|+ \varlimsup\limits_{n\to\infty} |(\hat H(z_n,t)-\hat H(z,t))(f)|\\[0.3cm]
=\!\varlimsup\limits_{n\to\infty} |z_n(f(H(\cdot, t_n))\!-\!f(H(\cdot, t)))|\!+\!\varlimsup\limits_{n\to\infty} |(z_n\!-\!z)(f(H(\cdot, t)))|\!=:\!I\!+\!II.
\end{array}}}
\]
By the definition of convergence in the weak-$^*$ topology, the limit $II$ equals $0$. Similarly, by continuity of $H$,  the sequence of
functions $\{f(H(\cdot, t_n))\}_{n\in\mN}\subset\mathcal O(U)$ converges uniformly on compact subsets of $U$ to the function $f(H(\cdot, t))\in\mathcal O(U)$. Since $z_n\in \mathcal O(U)^*$, the latter implies that the limit $\, I$ equals $0$ as well. Hence, $\lim_{n\to\infty}\hat H(z_n,t_n)=\hat H(w,t)$ in the topology of $M(\mathcal O(U))$, as required. 

Thus, $\hat H: M(\mathcal O(U))\times [0,1]\to M(\mathcal O(U))$ is a homotopy between $\hat H(\cdot,1)={\rm id}_{M(\mathcal O(U))}$ and 
$\hat H(\cdot,0)=z_o$, i.e., the Stein manifold $M(\mathcal O(U))$ is holomorphically contractible.
From here and Theorem \ref{teo3.2}, it  follows that the algebra $\mathcal O(U)\cong\mathcal O(M(\mathcal O(U)))$ is projective free.
 \end{example} 
                                          
\section{Commutative Unital Complex Banach Algebras}
\label{section_CUBA} 

Recall that for a commutative unital complex Banach algebra $A$,  the maximal ideal space $M(A)\subset A^\ast$  
 is the set of nonzero homomorphisms $A \!\rightarrow\! \mC\;$\,\footnote{Every such homomorphism is continuous, see, e.g., \cite[\S 23(A), Theorem]{Loo}.}\;\!endowed with the Gelfand topology, the weak-$\ast$ topology of  $A^\ast$. It is a compact Hausdorff space contained in the unit sphere of $A^\ast$. The Gelfand transform $\,\hat{\,} : A \rightarrow C(M (A))$, defined by $\hat{a}(\varphi):=\varphi(a)$ for $a\in A$ and $\varphi \in M(A)$, is a nonincreasing-norm morphism of Banach algebras.
  
  \begin{theorem}
 \label{14_may_2022_12:42}
 Let $A$ be a commutative unital complex Banach algebra. Then:
  \begin{itemize}
 \item[(1)] $A$ is projective free  if and only if  $ C(M (A))$ is projective free. 
\item[(2)] $A$ is Hermite if and only if $C(M(A))$ is Hermite. 
\end{itemize}
\noindent  In particular, if  
 $M(A)\homeq \varprojlim X_i$, 
where the $X_i$ are  finite-dimensional compact Hausdorff spaces  such that
$H^n(X_i,\mZ)=0$ for all $n\geq 5,$  then $A$ is Hermite. 
If, in addition, each space $X_i$ is connected and $H^2(X_i,\mZ)=H^4(X_i,\mZ)=0,$ then $A$ is projective free.
 \end{theorem}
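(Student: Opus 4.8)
The plan is to reduce both statements, and then the ``in particular'' clause, to the already-established results for algebras of continuous functions, exactly as Theorem~\ref{teo3.2} parallels Theorem~\ref{teo2.10} in the Stein setting. The one external input I would invoke is the Novodvorski--Taylor theory (\cite{Nov}, \cite[\S 7.5]{Tay}): the Gelfand transform $\,\hat{\,}:A\rightarrow C(M(A))$ induces, through the assignment $P\mapsto C(M(A))\otimes_A P$, a bijection between the isomorphism classes of finitely generated projective $A$-modules and those of finitely generated projective $C(M(A))$-modules. This is the Banach-algebra analogue of the bijection of Theorem~\ref{teo2.10}; it is the genuinely hard ingredient, resting on the multivariable holomorphic functional calculus together with Taylor's homotopy-theoretic machinery, and since the paper presents it as part of the background survey I would simply quote it rather than reprove it.

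Granting this bijection, the rest is a formal argument. First I would observe that the bijection, together with its inverse, preserves freeness and stable freeness: since $C(M(A))\otimes_A A^n\cong C(M(A))^n$ and $C(M(A))\otimes_A(-)$ commutes with finite direct sums, a free (resp.\ stably free) $A$-module is sent to a free (resp.\ stably free) $C(M(A))$-module; conversely, by injectivity of the correspondence, $C(M(A))\otimes_A P\cong C(M(A))^n=C(M(A))\otimes_A A^n$ forces $P\cong A^n$, while $C(M(A))\otimes_A(P\oplus A^k)\cong C(M(A))\otimes_A A^m$ forces $P\oplus A^k\cong A^m$. Hence a finitely generated projective $A$-module $P$ is free (resp.\ stably free) if and only if $C(M(A))\otimes_A P$ is. For~(1): $A$ is projective free $\iff$ every finitely generated projective $A$-module is free $\iff$ (using surjectivity and injectivity of the bijection) every finitely generated projective $C(M(A))$-module is free $\iff$ $C(M(A))$ is projective free. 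For~(2): the same chain of equivalences with ``projective'' replaced throughout by ``stably free'' shows that $A$ is Hermite if and only if $C(M(A))$ is Hermite.

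For the final clause, recall that $M(A)$ is a compact Hausdorff space, so Theorem~\ref{teo2.8} applies directly with $X=M(A)$. If $M(A)\homeq\varprojlim X_i$ with the $X_i$ finite-dimensional compact Hausdorff spaces and $H^n(X_i,\mZ)=0$ for all $n\geq 5$, then $C(M(A))$ is Hermite, hence $A$ is Hermite by~(2); if moreover each $X_i$ is connected with $H^2(X_i,\mZ)=H^4(X_i,\mZ)=0$, then $C(M(A))$ is projective free, hence $A$ is projective free by~(1).

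The one point requiring a little care in writing the proof of~(1) and~(2) is to work with the bare notion of a free module (isomorphic to $A^m$ for some $m\in\mZ_+$) rather than with ranks, since $M(A)$ need not be connected and the assignments $m\mapsto A^m$, $m\mapsto C(M(A))^m$ need not be injective on isomorphism classes; but the equivalences above are stated precisely in that language, which is the one used in the definitions of projective freeness and Hermiteness. As indicated, the real obstacle is the Novodvorski--Taylor bijection itself; once it is in hand, the theorem follows at once.
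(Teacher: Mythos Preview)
Your proposal is correct and follows essentially the same route as the paper: invoke the Novodvorski--Taylor correspondence (the paper phrases it as a bijection with isomorphism classes of complex vector bundles over $M(A)$ and then composes with Swan's theorem, whereas you state the composite bijection with projective $C(M(A))$-modules directly), and then appeal to Theorem~\ref{teo2.8} for the ``in particular'' clause. Your write-up is more detailed than the paper's one-paragraph proof---you spell out why the bijection preserves freeness and stable freeness and flag the rank issue when $M(A)$ is disconnected---but the underlying argument is the same.
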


Parts (1) and (2) of the theorem follow from a one-to-one correspondence (determined via the Gelfand transform) between  
the isomorphism classes of finitely generated projective  $A$ modules  and the isomorphism classes
 of complex vector bundles over $M(A)$ (see \cite{Nov}, and also \cite[\S 7.5, Theorem on p.199]{Tay})   along with the Swan theorem \cite[Theorem\,2]{Sw}.
In turn, the last statement  follows from Theorem~\ref{teo2.8}.

Let $\bar{A}$ be the uniform closure in $C(M(A))$ of the image under the Gelfand transform of algebra $A$. It is known that $M(\bar{A})=M(A)$, see, e.g., \cite[Proposition~3]{Roy}. Then we obtain from Theorem \ref{14_may_2022_12:42}: 

\begin{corollary}\label{cor4.2}
$A$ is projective free or Hermite if and only if  $ \bar{A}$ is  projective free or Hermite. 
\end{corollary}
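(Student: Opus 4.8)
The plan is to deduce the corollary directly from Theorem~\ref{14_may_2022_12:42}, applied separately to the two algebras $A$ and $\bar A$, together with the identity $M(\bar A)=M(A)$.

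First I would check that $\bar A$ lies within the scope of Theorem~\ref{14_may_2022_12:42}. By construction $\bar A$ is a uniformly closed subalgebra of $C(M(A))$ containing the Gelfand image $\hat A$; since $A$ is unital, $\bar A$ contains the constant functions, and since the Gelfand topology on $M(A)$ is the weak-$\ast$ topology of $A^\ast$, the family $\hat A$ (hence $\bar A$) separates the points of the compact Hausdorff space $M(A)$. Therefore $\bar A$, equipped with the supremum norm, is itself a commutative unital complex Banach algebra, and Theorem~\ref{14_may_2022_12:42} applies to it: $\bar A$ is projective free (resp.\ Hermite) if and only if $C(M(\bar A))$ is projective free (resp.\ Hermite).

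Next I would invoke the cited fact $M(\bar A)=M(A)$ (see \cite[Proposition~3]{Roy}); this homeomorphism identifies the Banach algebras $C(M(\bar A))$ and $C(M(A))$. Concatenating the instance of Theorem~\ref{14_may_2022_12:42} for $A$ with the instance for $\bar A$ yields the chain of equivalences
\begin{align*}
A \text{ is projective free} &\iff C(M(A)) \text{ is projective free}\\
&\iff C(M(\bar A)) \text{ is projective free}\\
&\iff \bar A \text{ is projective free},
\end{align*}
together with the entirely parallel chain in which ``projective free'' is replaced throughout by ``Hermite''. This is precisely the assertion of the corollary.

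There is no genuine obstacle here: the corollary is a formal consequence of Theorem~\ref{14_may_2022_12:42}. The only two points that require care are (i) confirming that $\bar A$ is again a commutative unital complex Banach algebra so that the theorem may legitimately be applied to it, and (ii) the nontrivial input $M(\bar A)=M(A)$, which is borrowed from \cite{Roy}; here one should keep in mind that $M(\bar A)$ is formed from $\bar A$ regarded as an abstract Banach algebra, and it is exactly \cite[Proposition~3]{Roy} that guarantees that restriction of characters of $\bar A$ realises the canonical homeomorphism with $M(A)$.
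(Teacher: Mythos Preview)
Your proof is correct and follows exactly the approach the paper takes: apply Theorem~\ref{14_may_2022_12:42} to both $A$ and $\bar A$ and use the identification $M(\bar A)=M(A)$ from \cite[Proposition~3]{Roy}. The paper merely records this as an immediate consequence of Theorem~\ref{14_may_2022_12:42}; your write-up spells out the verification that $\bar A$ is again a commutative unital complex Banach algebra, which is the only point one might want to make explicit.
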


\goodbreak

\begin{example}$\;$
\label{example_4.3}

\noindent 
(1)  Let $\textrm{L}^1[0, 1]$ be the Banach space of complex-valued Lebesgue integrable functions on $[0,1]$ with the norm $\|f \|_1:=\int_0^1 |f(t)| dt$. The space 
$\textrm{L}^1[0, 1]$ equipped with the multiplication given by truncated convolution $(f\ast g)(t):=\int_0^t f(\tau)g(t-\tau) d\tau$  becomes a complex commutative Banach algebra $V$ called the {\em Volterra algebra}. The algebra $V$ is non-unital without maximal ideals, see, e.g., \cite[Example~9.82]{MorRup}. Let $V_1$ denote the algebra of pairs $(f,c)$, where $f\in V$ and $c\in\mC$ with addition and multiplication given by
\[
(f,c)+(f',c'):=(f+f',c+c'),\quad (f,c)\cdot(f',c'):=(f\ast f'+c\cdot f'+c'\cdot f, c\cdot c').
\]
We equip $V_1$ with the norm $
\|(f,c)\|:=\|f\|_1+|c|$. Then $V_1$ becomes a commutative unital complex Banach algebra. Since $V$ is without maximal ideals, 
 $V\times \{0\}$  
is the only maximal ideal of $V_1$. Thus due to Theorem~\ref{14_may_2022_12:42} the algebra $V_1$ is projective free.

\smallskip 

\noindent (2) Let $A$ be a commutative unital complex Banach algebra such that the algebra $\bar A\subset C(M(A))$ is generated by $k$ elements (this is true, e.g., if $A$ itself is generated by $k$ elements). Then $A$ is Hermite if $k\le 5$, and projective free  if $k\le 2$ and $A$ does not contain nontrivial idempotent elements. Indeed,
in this case $M(A)$ is homeomorphic to a polynomially convex subset of $\mC^k$ (see, e.g., \cite[Chap. III, Theorem~1.4]{Gam}).  Recall that a compact set $K\subset \mC^k$ is {\em polynomially convex} if for every $\bbz\not\in K$ there is a polynomial $p\in \mC[z_1,\dots,z_k]$ such that $|p(\bbz)|> \sup_{\bbw \in K} |p(\bbw)|$. It is known, see, e.g., \cite[Corollary~2.3.6]{Sto},
that if $K\subset\mC^k$ is a compact polynomially convex set, then $H^n(K,\mZ)=0$ for all $n\ge k$. This and Theorem~\ref{14_may_2022_12:42} imply that $A$ is Hermite if $k\le 5$. If $k\le 2$ and $A$ does not contain nontrivial idempotent elements, then due to the Shilov idempotent theorem (see, e.g., \cite[Chap.~III, Corollary~6.5]{Gam}) $M(A)$ is connected. Hence, in this case Theorem~\ref{14_may_2022_12:42} implies that  $A$ is projective free. \smallskip

\noindent (3) Let $L^\infty(S)$ be the Banach algebra of  essentially bounded  measurable complex-valued functions on a measure space $S$,  with pointwise operations and the supremum norm. 
 Then the maximal ideal space $M(L^\infty(S))$ is totally disconnected (see, e.g., \cite[Chap.~I, Lemma~9.1]{Gam}) and, hence, ${\rm dim}\,M(L^\infty(S))=0$. Thus, Theorem~\ref{14_may_2022_12:42} implies that the algebra $L^\infty(S)$ is Hermite.

\smallskip 

\noindent (4) Let $\mD:=\{z\in \mC:|z|<1\}$ and $\mT:=\{z\in \mC:|z|=1\}$. With pointwise operations and the supremum norm, 
$\textrm{L}^\infty$ denotes the Banach algebra of essentially bounded Lebesgue measurable functions on $\mT$, and  $H^\infty$ the Banach algebra of all bounded holomorphic functions in $\mD$. Via identification with boundary values, $H^\infty$ is a uniformly closed subalgebra of $\textrm{L}^\infty$.  According to the Chang-Marshall theorem, see, e.g., \cite[Chap.~IX, \S 3]{Gar}, any uniformly 
closed subalgebra $A$ between $H^\infty$ and $\textrm{L}^\infty$ is a {\em Douglas algebra} generated by $H^\infty$ and a family $\mathscr B_A\subset\overline{H^\infty}$ of functions conjugate to some inner functions of $H^\infty$ (written $A=[H^\infty,\mathscr B_A]$). If $H^\infty\subsetneq A$, then the maximal ideal space $M(A)$ is a closed subset of 
$M(H^\infty)\setminus\mD$ of the form (see, e.g., \cite[Chap.~IX, Theorem~1.3]{Gar}):
\[
M(A)=\bigcap_{\bar u\in \mathscr B_A}\{x\in M(H^\infty)\, :\, |x(u)|=1\}.
\]
According to the results of Su\'{a}rez \cite{Sua}, for each closed set $K\subset M(H^\infty)$ 
 $\dim K \le 2$ 
and $H^2(K,\mZ)=0$. This and Theorem \ref{14_may_2022_12:42} imply that $A$ is Hermite and it is projective free if $M(A)$ is connected. Next, due to the Shilov idempotent theorem, $M(A)$ is connected if and only if $A$ does not contain nontrivial idempotents in $\textrm{L}^\infty$.   For instance, $M(A)$ is connected if $A$ is one of the algebras: $H^\infty$, $H^\infty+C$, where $C:=C(\mT)$,  or $B_1=[H^\infty,C_1]$ (the closed subalgebra generated by $H^\infty$ and  the Banach algebra $C_1$ of all complex-valued functions on $\mT$ which are continuous except possibly at $z=1$ but which have one-sided limits at $z=1$; for details, see e.g., \cite{Sar} or \cite[Chap.~IX, Exercise~7]{Gar}).  Thus,  
 in these special cases, $ A$ is projective free. 
\end{example}

\section{The class $\mathscr{C}$}
\label{section_class_C}

One way to construct new Banach algebras from known ones is to take their projective tensor product. In general, the projective tensor product of projective free or Hermite Banach 
algebras does not inherit the  property. In this  section we introduce a new class of projective free Banach algebras such that their projective tensor product with projective free or Hermite Banach algebras  
continues to be projective free or Hermite, respectively. 


 A topological space $X$ is said to be of {\em trivial shape} if every continuous map from $X$ to an ANR is homotopic to a constant map; see, e.g., \cite[p.248]{Mar99}.  A  space of trivial shape generalises the notion of a contractible space, and, in particular, if a  space of trivial shape is homotopy equivalent to an ANR, then it is contractible.
If $X$ is a  compact Hausdorff space of trivial shape, then it is connected and  \v{C}ech cohomology groups $H^k(X,\mZ)=0$ for all $k\ge 1$. 
   
 We say that a commutative unital complex Banach algebra $A$   belongs to the class $\mathscr{C}$ 
 if $M(A)$ is a  space of trivial shape. In this section, we study some properties of class $\mathscr{C}$.
 
 \smallskip

Let $B, C$ be unital closed subalgebras of
a commutative unital complex Banach algebra $\mathfrak A$, and let 
$B\widehat\otimes_{\mathfrak A}C\subset\mathfrak A$ be the closure of the subalgebra $\langle B,C\rangle$
 generated by $B$ and $C$. 
 Following \cite{All},  we assume that the following property is satisfied: 
\begin{quote}
There exists a constant $c$ such that for all $\xi\in  M(B)$ and every $n\in\mathbb N$ and  $b_i\in B$, $c_i\in C$, $1\le i\le n$,
\begin{equation}\label{eq3.1}
\left\|\sum_{k=1}^n\xi(b_k)c_k\right\|_{C}\le c\left\|\sum_{k=1}^n b_k c_k\right\|_{\mathfrak A}.
\end{equation}
\end{quote}

\begin{example}\label{remark_eq3.1}
(For basic definitions and results on topological tensor products, see, e.g.,  \cite{Rya}.)

Let $B, C$ be commutative unital complex Banach algebras and  let $B\widehat\otimes_{\alpha}C$ be the completion  of the algebraic tensor product $B\otimes C$ equipped with a {\em reasonable crossnorm} $\lVert\cdot\rVert_\alpha$, i.e., such that 
$\|v\|_\varepsilon\leq \|v\|_\alpha\leq \|v\|_\pi$ for all $v\in B\otimes C$.
Here $\lVert\cdot\rVert_\pi$ and $\lVert\cdot\rVert_\varepsilon$ denote the projective and injective tensor norms on $B\otimes C$, given for $v\in B\otimes C$ by  
$$
\!\!\!{\scaleobj{0.96}{
\begin{array}{rcl}
\|v\|_\pi \!\!\!\!\!&:=&\!\!\!\!\inf\Big\{\sum\limits_{i =1}^n \|b_i\|_B\, \|c_i\|_C\, :\, v=\sum\limits_{i=1}^n b_i\otimes c_i, \;\! n\in\mathbb N\Big\},\medskip\\
\|v\|_\varepsilon \!\!\!\!\!&:=&\!\!\!\!\sup\{|
(\xi\otimes\eta)(v)|\, :\,  \xi \in B^*\!, \
\|\xi\|_{B^*}\leq 1,\;  \eta \in C^*\!,\  \|\eta\|_{C^*}\!\leq \!1
\}.
\end{array}}}
$$
(As usual, $B^*$ and $C^*$ stand for duals of $B$ and $C$, respectively.)

Suppose that $B\widehat{\otimes}_\alpha C$ is a Banach algebra with operations compatible with operations on $B\otimes C$. Then  \eqref{eq3.1} is satisfied with $c=1$ when $\mathfrak{A}:=B\widehat{\otimes}_\alpha C$.
This is the case, e.g.,
 if $\mathfrak A$ is    (a) the projective  tensor product $B\widehat\otimes_{\pi}C$; (b)  the injective  tensor product $B\widehat\otimes_{\varepsilon}C$ where either $B$ or $C$  is  a uniform algebra; see, e.g., \cite[\S1.3]{DHT} for the references. (For other examples
 see, e.g., \cite[Theorem~4]{Sim}.)
 \end{example}

Let $i_B$ denote the embedding   $ B\hookrightarrow B \;\! \widehat{\otimes}_\mathfrak A C $.

\begin{theorem}\label{teo3.1}
If $C\in\mathscr C$, then  the correspondence 
\begin{equation}\label{eq3.2}
P\cong B\otimes_B\!P\stackrel{i_B\otimes {\rm id}_P}{\xrightarrow{\hspace*{07mm}}} (B \;\! \widehat{\otimes}_\mathfrak A C)\otimes_{B}\!P 
\end{equation}
determines a bijection between isomorphism classes of finitely generated projective $B$ modules and finitely generated projective $B \;\! \widehat{\otimes}_\mathfrak A C$ modules.
\end{theorem}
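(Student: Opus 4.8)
The plan is to reduce the statement to a statement about complex vector bundles over the maximal ideal spaces, exactly as was done in Theorem~\ref{14_may_2022_12:42}, and then to exploit the trivial-shape hypothesis on $M(C)$ to show that the natural map $M(B\;\!\widehat\otimes_{\mathfrak A}C)\to M(B)$ induced by $i_B$ is a ``shape equivalence'' in the sense that pullback along it is a bijection on isomorphism classes of finite-type vector bundles. First I would record the relevant structure of the maximal ideal space: by the Gelfand theory applied in the presence of condition \eqref{eq3.1}, every $\varphi\in M(B\;\!\widehat\otimes_{\mathfrak A}C)$ restricts to a pair $(\varphi|_B,\varphi|_C)\in M(B)\times M(C)$, and condition \eqref{eq3.1} is precisely what is needed to guarantee that the restriction map $B\;\!\widehat\otimes_{\mathfrak A}C\to C$ at a fixed $\xi\in M(B)$ is a bounded homomorphism, so that the fibre of $M(B\;\!\widehat\otimes_{\mathfrak A}C)\to M(B)$ over $\xi$ is (homeomorphic to a closed subset of) a quotient carrying $M(C)$, and in fact surjects onto $M(B)$ with each fibre of trivial shape. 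The key topological input, then, is: \emph{if $p\colon Z\to W$ is a continuous surjection of compact Hausdorff spaces all of whose fibres are of trivial shape (and the family of fibres behaves well, e.g.\ $p$ is a ``cell-like-like'' or approximately-trivial map), then $p^*$ is a bijection between isomorphism classes of finite-type complex vector bundles over $W$ and over $Z$.}

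Concretely, I would carry out the following steps. \emph{Step 1.} Identify $M(B\;\!\widehat\otimes_{\mathfrak A}C)$: show it is homeomorphic to $\{(\xi,\eta)\in M(B)\times M(\mathfrak A_\xi)\}$ where $\mathfrak A_\xi$ is the image of $C$ under the fibre restriction, and that $M(\mathfrak A_\xi)$ is a continuous image of $M(C)$, hence of trivial shape (trivial shape is inherited by continuous images of compact spaces, since a map to an ANR composed with the quotient is still a map to an ANR, and that is null-homotopic). So $p\colon M(B\;\!\widehat\otimes_{\mathfrak A}C)\to M(B)$ is a surjection with all fibres of trivial shape. \emph{Step 2.} Write $M(B)=\varprojlim N_\alpha$ as an inverse limit over nerves of finite open covers (a polyhedral inverse limit, as in the proof of Theorem~\ref{teo2.8}), and correspondingly present $M(B\;\!\widehat\otimes_{\mathfrak A}C)$ as an inverse limit of the induced bundle-of-fibres spaces. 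Using Lemma~\ref{proposition_25_may_2022_1510}(1),(2), reduce the surjectivity of $p^*$ on bundles and the injectivity of $p^*$ to the corresponding statements at a finite polyhedral stage $N_\alpha$. \emph{Step 3.} At the finite stage, over a finite simplicial complex $N_\alpha$ one builds a section-over-skeleta argument: a finite-type bundle over $Z_\alpha=p_\alpha^{-1}(N_\alpha)$ is classified by a map to a Grassmannian; since each fibre of $p_\alpha$ is of trivial shape, such a map is, fibrewise, null-homotopic, and one can push the classifying map down skeleton-by-skeleton over $N_\alpha$ (using that $\mathbf{Gr}_n(\mathbb C^{n+k})$ is an ANR and is $r$-simple for all $r$, plus the obstruction-theoretic vanishing coming from triviality of shape of the fibres) to obtain a bundle on $N_\alpha$ pulling back to the given one; injectivity is the analogous relative statement comparing two bundles agreeing after pullback. \emph{Step 4.} Pass to the limit and translate back via the Swan/Vaserstein correspondence (Proposition~\ref{prop2.1}, Proposition~\ref{prop2.3}, Theorem~\ref{14_may_2022_12:42}) to obtain the stated bijection \eqref{eq3.2}; finally check naturality, i.e.\ that the bijection really is induced by $i_B\otimes\mathrm{id}_P$, which is formal once one knows that on bundles it is $p^*$ and that $p^*$ corresponds under the Gelfand transform to extension of scalars along $i_B$.

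The main obstacle I expect is Step~3: making the fibrewise null-homotopy of a Grassmannian-valued map into a genuine factorisation of the map through the base $N_\alpha$, uniformly over the complex. This is exactly where trivial shape (rather than mere connectedness or cohomological vanishing of fibres) is essential, and it is the analog in this Banach-algebra/shape-theoretic setting of the fact that over a point-like base a bundle is trivial. I would handle it by an induction over the skeleta of $N_\alpha$ combined with the homotopy-extension property for ANR targets and the ``bridge'' / $n$-homotopy machinery of Hu already invoked in the proof of Theorem~\ref{teo2.8}; the compactness of $M(C)$ and the ANR-ness of the Grassmannian ensure that ``fibrewise null-homotopic'' can be upgraded to ``null-homotopic on a neighbourhood of each fibre,'' which is what the skeleton induction consumes. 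A secondary technical point is verifying \eqref{eq3.1} does the work claimed in Step~1, namely that it forces the fibre quotient of $C$ to be a well-defined Banach algebra with maximal ideal space a continuous image of $M(C)$; this is routine but must be stated carefully, and it is the only place the hypothesis \eqref{eq3.1} enters.
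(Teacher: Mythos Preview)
Your overall strategy---reduce to vector bundles on maximal ideal spaces via Novodvorski--Taylor and Swan, present the base as an inverse limit of finite polyhedra, and exploit trivial shape of $M(C)$---matches the paper's. But two things go wrong, one minor and one central.

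First, your Step~1 is more delicate than it needs to be and contains a false claim. The paper shows (Lemma~\ref{lem3.2}) that under condition~\eqref{eq3.1} the maximal ideal space $M(B\;\!\widehat\otimes_{\mathfrak A} C)$ is homeomorphic to the \emph{product} $M(B)\times M(C)$, not merely a fibration with fibres that are continuous images of $M(C)$. The projection $P_\xi$ maps onto $C$ itself, so each fibre is exactly $M(C)$. Your assertion that ``trivial shape is inherited by continuous images of compact spaces'' is false (e.g.\ $[0,1]\twoheadrightarrow S^1$), so it is fortunate that you do not need it.

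Second, and more importantly, you miss the clean device that makes Step~3 a one-liner rather than an obstruction-theoretic induction. Given a classifying map $h\colon U_i\times M(C)\to \mathbf{Gr}_n(\mathbb C^m)$ with $U_i$ a finite polyhedron, pass by the exponential law to the adjoint map
\[
H\colon M(C)\longrightarrow C\bigl(U_i,\mathbf{Gr}_n(\mathbb C^m)\bigr),\qquad H(y)(x):=h(x,y).
\]
The target mapping space is an ANR, because both $U_i$ and $\mathbf{Gr}_n(\mathbb C^m)$ are ANRs (see, e.g., \cite[Chapter~VI, Theorem~2.4]{Hu}). Since $M(C)$ has trivial shape, $H$ is null-homotopic, and unwinding the adjunction gives a homotopy from $h$ to a map $h_o(x,y)=h(x,o)$ that factors through the projection $U_i\times M(C)\to U_i$. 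This immediately yields surjectivity of $p_B^*$ on bundle classes; injectivity is then just restriction to the slice $M(B)\times\{o\}$. Your proposed skeleton-by-skeleton patching of fibrewise null-homotopies is exactly the difficulty the exponential law circumvents: it packages the ``consistent choice of null-homotopies over all $x\in U_i$'' into a single map to an ANR, where trivial shape applies directly.
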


\begin{remark}\label{rem5.2}
The result shows that the class $\mathscr C$ is an analog of the class of local rings (i.e., those with unique maximal ideals) in commutative algebra (see, e.g., \cite{AM} for the corresponding definitions and results). Indeed, if $S$ is a local commutative ring with the maximal ideal $\mathfrak m$ and $R$ is a commutative ring, then for the completion in the Krull topology $\widehat{R\otimes S}_I:=\varprojlim(R\otimes S)/I^n$ with respect to the ideal $I=R\otimes \mathfrak m$, we obtain an analog of Theorem \ref{teo3.1}, i.e., the correspondence $R\mapsto \widehat{R\otimes S}_I$
induces a bijection between isomorphism classes of finitely generated projective $R$ modules and finitely generated projective $\widehat{R\otimes S}_I$ modules. Note that the canonical map $\pi$ from $R\otimes S$ to $\widehat{R\otimes S}_I$ is  injective on $R\otimes 1_S\, (\cong R)$.
If, in addition, $S$ is Noetherian, then by the Krull intersection theorem, $\cap_{n=1}^\infty\mathfrak m^n=0$ and, hence, $\pi$ is injective on $1_R\times S\, (\cong S)$ (here $1_S$ and $1_R$ are units in $S$ and $R$). Moreover, the subalgebra generated by
$\pi(R\otimes 1_S)$ and $\pi(1_R\otimes S)$ is dense in $\widehat{R\otimes S}_I $. Thus, in this case the completion $\widehat{R\otimes S}_I $ is an analog of $B\widehat\otimes_\mathfrak A C$ in Theorem \ref{teo3.1}.
\end{remark}

 To prove Theorem \ref{teo3.1}, first, we prove the following general result.
 
\begin{lemma}\label{lem3.2}
Under the condition \eqref{eq3.1}, 
$M(B\;\!\widehat\otimes_\mathfrak A C)$ is homeomorphic to $M(B)\!\times \!M(C)$.
\end{lemma}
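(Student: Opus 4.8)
The plan is to identify $M(B\,\widehat\otimes_\mathfrak A C)$ with $M(B)\times M(C)$ by exhibiting an explicit continuous bijection between these compact Hausdorff spaces and invoking the standard fact that a continuous bijection between compact Hausdorff spaces is a homeomorphism. The natural candidate map goes from $M(B\,\widehat\otimes_\mathfrak A C)$ to $M(B)\times M(C)$: a character $\chi$ on $B\,\widehat\otimes_\mathfrak A C$ restricts to characters $\chi|_B\in M(B)$ and $\chi|_C\in M(C)$, and this restriction map is clearly continuous for the Gelfand topologies. So the real content is (a) injectivity of this restriction map, and (b) its surjectivity, i.e., that every pair $(\xi,\eta)\in M(B)\times M(C)$ arises from a (necessarily unique) character on $B\,\widehat\otimes_\mathfrak A C$.

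Injectivity is essentially formal: $B\,\widehat\otimes_\mathfrak A C$ is by definition the closure of the algebra $\langle B,C\rangle$ generated by $B$ and $C$ inside $\mathfrak A$, so any element is a limit of finite sums $\sum b_k c_k$ with $b_k\in B$, $c_k\in C$; a character is determined on such sums by its restrictions to $B$ and $C$, and hence by continuity on all of $B\,\widehat\otimes_\mathfrak A C$. For surjectivity, given $(\xi,\eta)\in M(B)\times M(C)$, I would first define the obvious linear functional on the dense subalgebra $\langle B,C\rangle$ by sending a representative $\sum_{k=1}^n b_k c_k$ to $\sum_{k=1}^n \xi(b_k)\,\eta(c_k)$; it is multiplicative wherever it is well-defined. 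The crucial point — and I expect this to be the main obstacle — is showing this is well-defined and bounded on $\langle B,C\rangle$, so that it extends continuously to the closure. This is exactly where hypothesis \eqref{eq3.1} enters: given $\xi$, the estimate
\[
\Bigl\|\sum_{k=1}^n \xi(b_k)c_k\Bigr\|_C\le c\,\Bigl\|\sum_{k=1}^n b_k c_k\Bigr\|_\mathfrak A
\]
shows that the map $\sum b_k c_k\mapsto \sum \xi(b_k)c_k\in C$ is well-defined (if $\sum b_k c_k=0$ in $\mathfrak A$ then the image has norm zero) and bounded from $\langle B,C\rangle$ with the $\mathfrak A$-norm into $C$; composing with $\eta\in M(C)$ then yields a bounded, hence continuously extendable, multiplicative functional on $B\,\widehat\otimes_\mathfrak A C$ whose restrictions to $B$ and $C$ are $\xi$ and $\eta$.

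Finally I would record that the functional so obtained is nonzero (it sends the unit to $1$) and multiplicative, so it is a genuine element of $M(B\,\widehat\otimes_\mathfrak A C)$, and that the two maps just constructed are mutually inverse. Continuity of the restriction map $\chi\mapsto(\chi|_B,\chi|_C)$ is immediate from the definition of the Gelfand (weak-$\ast$) topologies, since for fixed $b\in B$ the evaluation $\chi\mapsto\chi(b)$ is continuous, and similarly for $c\in C$. Since both $M(B\,\widehat\otimes_\mathfrak A C)$ and $M(B)\times M(C)$ are compact Hausdorff, the continuous bijection is automatically a homeomorphism, which completes the proof. The only subtlety worth double-checking is that condition \eqref{eq3.1} is assumed for every $\xi\in M(B)$ with a single uniform constant $c$, which is precisely what is needed for the argument to run for an arbitrary prescribed $\xi$; the symmetry between $B$ and $C$ is not needed, since once $\xi$ is fixed the remaining freedom in $\eta$ is handled simply by post-composition.
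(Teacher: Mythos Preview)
Your proof is correct and follows essentially the same route as the paper's. The only cosmetic difference is the direction of the map: the paper defines $F:M(B)\times M(C)\to M(B\,\widehat\otimes_\mathfrak A C)$ by $F(\xi,\eta)=\eta\circ P_\xi$ (where $P_\xi$ is exactly your extended map $\sum b_kc_k\mapsto\sum\xi(b_k)c_k$) and checks directly that $F$ is a continuous bijection, whereas you work with the inverse restriction map $\chi\mapsto(\chi|_B,\chi|_C)$; the key use of \eqref{eq3.1} to build $P_\xi$ and the verification that the two maps are mutual inverses are identical in both arguments.
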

\begin{proof}
Condition \eqref{eq3.1} implies that for $\xi\in M(B)$ 
the map $\langle B,C\rangle\rightarrow C$,
\begin{equation}\label{eq3.3}
\left( \sum_{k=1}^n b_kc_k  \right)\mapsto \sum_{k=1}^n\xi(b_k)c_k\in C,
\end{equation}
extends by continuity to a bounded multiplicative projection
\[
P_\xi:  B\widehat\otimes_{\mathfrak A}  C\rightarrow C.
\]
In particular,   $\eta\circ P_\xi\in M(B\;\!\widehat\otimes_\mathfrak A C)$ for each $\eta\in M(C)$. It is easily seen that the  map
\[
F: M(B)\!\times \!M(C)\rightarrow M(B\;\!\widehat\otimes_\mathfrak A C),\quad F(\xi,\eta):=\eta\circ P_\xi,
\]
is continuous. Also, $F$ is injective, as if $\eta_1\circ P_{\xi_1}=\eta_2\circ P_{\xi_2}$ for some $(\xi_i,\eta_i)\in M(B)\!\times \!M(C)$, $i=1,2$, then for all  $b \in B$, $c\in C$,
\begin{equation}\label{eq3.4}
\xi_1(b )\eta_1(c )=\xi_2(b )\eta_2(c ).
\end{equation}
Applying \eqref{eq3.4} with $c=1$ (the unit of $\mathfrak A$) and then again with $b=1$, we get $\xi_1=\xi_2$ and $\eta_1=\eta_2$, as required.

Further, the map $F$ is surjective, as if $\varphi\in M(B\;\!\widehat\otimes_\mathfrak A C)$, then clearly $\xi:=\varphi|_{B}\in M(B)$ and $\eta:=\varphi|_{C}\in M(C)$ and due to \eqref{eq3.3}
\[
\begin{split} 
F(\xi,\eta)\left( \sum_{k=1}^n b_kc_k  \right) & =(\eta\circ P_\xi)\left( \sum_{k=1}^n b_kc_k  \right)=\eta\left( \sum_{k=1}^n \varphi(b_k)c_k \right)\medskip\\
& = \sum_{k=1}^n \varphi(b_k)\varphi(c_k ) =\varphi\left( \sum_{k=1}^n b_kc_k  \right),
\end{split}
\]
i.e., $F(\xi,\eta)=\varphi$.

This completes the proof of the lemma.
\end{proof}

\begin{proof}[Proof of Theorem \ref{teo3.1}] 
Due to Lemma \ref{lem3.2} without loss of generality we will identify  
$M(B\;\!\widehat\otimes_\mathfrak A C)$ with $M(B)\!\times \!M(C)$. Then the transpose of $i_B$ restricted to  $M(B\;\!\widehat\otimes_\mathfrak A C)$ is the map $p_B:M(B)\!\times \!M(C)\rightarrow M(B)$,
$p_B(x,y)=x$ for all $(x,y)\in M(B)\!\times \!M(C)$.

According to the Novodvorski-Taylor theorem
(\cite{Nov},   \cite[\S 7.5]{Tay})
 and the Swan theorem \cite{Sw}, to prove the result we must show that the pullback by $p_B$ determines a bijection between isomorphism classes of complex vector bundles over $M(B)$ and $M(B)\times M(C)$. In turn, it suffices to prove the same for complex vector bundles of constant rank over clopen subsets $U\subset M(B)$ and  $U\times M(C)\subset M(B)\!\times \!M(C)$.

To this end, we present  $U$ as $\varprojlim U_i$, where all $U_i$ are finite-dimensional compact simplicial complexes
(cf. the argument of the proof of Theorem \ref{teo2.8}). Then 
$U\times M(C)=\varprojlim (U_i\times M(C)) $. Moreover, if  $\pi_i: U\rightarrow U_i$ are canonical   projections for the first limit, then $\tilde\pi_i:=(\pi_i, {\rm id}_{M(C)}): U\times M(C)\rightarrow U_i\times M(C)$ are canonical projections  for the  second one.

Suppose   $E$ is a complex vector bundle of rank $n$ over $U\times M(C)$.  Due to Lemma \ref{proposition_25_may_2022_1510}\,(2) there is an index $i $ and a complex vector bundle $E_{i}$ over $U_i\times M(C)$ such that the pullback bundle 
 $\tilde\pi_{i}^*(E_{i})\cong E$. 
 Then there is a  map $h\!\in \!C(U_i, {\textrm{\bf  Gr}}_n(\mC^{m}))\;$\footnote{For topological spaces $X,Y$ we denote by $C(X,Y)$ the set of continuous maps from $X$ to $Y$.} from $U_i$  into a complex Grassmanian such that 
$h^*(\gamma_{n,m})\cong E_i$, 
where $\gamma_{n,m}$ is the tautological bundle over $\textrm{\bf Gr}_n(\mC^{m})$, see, e.g., \cite[Part I, Theorem~3.5.5]{Hus}.  Consider the map $H: M(C)\rightarrow C(U_i,\textrm{\bf Gr}_n(\mC^{m}))$,
\[
H(y)(x):=h(x,y),\quad y\in M(C),\quad x\in U_i.
\]
Since  $U_i$ is an ANR (as it is a compact simplicial complex; see, e.g.,  \cite[Chap.~III, Corollary~8.4]{Hu}), and also $\textrm{\bf Gr}_n(\mC^{m})$ is an ANR (as it is a compact complex manifold \cite[Chap.~III, Corollary~8.3]{Hu}), the space $C(U_i,\textrm{\bf Gr}_n(\mC^{m}))$ of continuous maps from $U_i$ to $\textrm{\bf Gr}_n(\mC^{m})$ equipped with the topology of uniform convergence is an ANR as well; 
see, e.g. \cite[Chapter~VI, Theorem~2.4]{Hu}. Moreover, since every continuous map between compact metrisable spaces is uniformly continuous, the map $H$ is continuous. Thus by the definition of a space of trivial shape,  $H$ is homotopic to a constant map $M(C)\rightarrow C(U_i,\textrm{\bf Gr}_n(\mC^{m}))$. This homotopy gives rise to a homotopy between $h$ and a map $h_o: U_i\times M(C)\rightarrow  \textrm{\bf Gr}_n(\mC^{m})$, $h_o(x,y):=h(x,o)$ for all $(x,y)\in U_i\times M(C)$, where $o\in M(C)$ is a fixed point. In particular, we have the following isomorphisms of bundles
\begin{equation}\label{eq3.5}
E_i\cong h^*(\gamma_{n,m}) \cong h_o^*(\gamma_{n,m}) \cong p_i^*(E_i),
\end{equation} 
where $p_i: U_i\times M(C)\rightarrow U_i\times \{o\}$ is defined by the formula $p_i(x,y)=(x,o)$ for all $x\in U_i$.  Applying to \eqref{eq3.5} the pullback map $\tilde\pi_i^*$ we obtain that the bundle  $E$ is isomorphic to the pullback by the natural projection $U\times M(C)\rightarrow U\times \{o\}$ of the restriction of $E$ to  $U\times \{o\}$. Since $p_B$ maps $U_i\times \{o\}$ homeomorphically onto $U$, this shows that $E$ is isomorphic to a bundle pulled back by $p_B$ from $U$. Thus the map $p_B^*$ determines a surjection between isomorphism classes of complex vector bundles over $M(B)$ and $M(B)\times M(C)$. Clearly, it determines an injection between these sets as well, since if $E_1,E_2$ are bundles over $M(B)$ such that the pullback bundles $p_B^*(E_1)$ and $p_B^*(E_2)$ are isomorphic, then their restrictions to $M(B)\times \{o\}$ are isomorphic and so $E_1$ and $E_2$ must be isomorphic.

The proof of the theorem is complete.
\end{proof}

Theorem \ref{teo3.1} leads straightforwardly to the following:
  
 \begin{corollary}
 \label{theorem_24_may_2022_0929}
 Let $B,C\subset\mathfrak A$ be Banach algebras satisfying conditions of Theorem \ref{teo3.1}.
  If $C\in \mathscr{C}$ and $B$ is   Hermite, 
 then $B\;\!\widehat{\otimes}_\mathfrak A C$ is Hermite.
 If $C\in \mathscr{C}$ and $B$ is  projective free, 
 then $B\;\!\widehat{\otimes}_\mathfrak A C$ is projective free. 
 \end{corollary}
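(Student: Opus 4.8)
The plan is to obtain the corollary as a purely formal consequence of the bijection of Theorem~\ref{teo3.1}, using only that the correspondence \eqref{eq3.2} is the base-change functor $P\mapsto (B\widehat\otimes_{\mathfrak A}C)\otimes_B P$ along the unital homomorphism $i_B\colon B\to B\widehat\otimes_{\mathfrak A}C$, together with two elementary properties of base change: (i) $(B\widehat\otimes_{\mathfrak A}C)\otimes_B B^n\cong (B\widehat\otimes_{\mathfrak A}C)^n$ for every $n\in\mZ_+$; and (ii) base change is additive, so $(B\widehat\otimes_{\mathfrak A}C)\otimes_B(P_1\oplus P_2)\cong\big((B\widehat\otimes_{\mathfrak A}C)\otimes_B P_1\big)\oplus\big((B\widehat\otimes_{\mathfrak A}C)\otimes_B P_2\big)$ for finitely generated projective $B$-modules $P_1,P_2$. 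With these in hand, the definitions of projective freeness and Hermiteness transport across the equivalence supplied by Theorem~\ref{teo3.1}.

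For the projective-free case I would argue as follows. Let $Q$ be a finitely generated projective $B\widehat\otimes_{\mathfrak A}C$-module. Since $C\in\mathscr C$, Theorem~\ref{teo3.1} applies, and by surjectivity of \eqref{eq3.2} there is a finitely generated projective $B$-module $P$ with $Q\cong(B\widehat\otimes_{\mathfrak A}C)\otimes_B P$. As $B$ is projective free, $P\cong B^m$ for some $m\in\mZ_+$; then by (i), $Q\cong(B\widehat\otimes_{\mathfrak A}C)^m$ is free. Hence $B\widehat\otimes_{\mathfrak A}C$ is projective free.

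For the Hermite case, let $Q$ be a finitely generated stably free $B\widehat\otimes_{\mathfrak A}C$-module, say $Q\oplus(B\widehat\otimes_{\mathfrak A}C)^k\cong(B\widehat\otimes_{\mathfrak A}C)^n$ for some $k,n\in\mZ_+$. Write $Q\cong(B\widehat\otimes_{\mathfrak A}C)\otimes_B P$ as before, with $P$ a finitely generated projective $B$-module. Using (i) and (ii) one gets
\[
(B\widehat\otimes_{\mathfrak A}C)\otimes_B(P\oplus B^k)\;\cong\;Q\oplus(B\widehat\otimes_{\mathfrak A}C)^k\;\cong\;(B\widehat\otimes_{\mathfrak A}C)^n\;\cong\;(B\widehat\otimes_{\mathfrak A}C)\otimes_B B^n.
\]
Now $P\oplus B^k$ and $B^n$ are finitely generated projective $B$-modules, so they lie in the domain of the bijection, and injectivity of \eqref{eq3.2} yields $P\oplus B^k\cong B^n$; thus $P$ is stably free, hence $P\cong B^m$ because $B$ is Hermite, and again $Q\cong(B\widehat\otimes_{\mathfrak A}C)^m$ is free. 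Therefore $B\widehat\otimes_{\mathfrak A}C$ is Hermite. The only place asking for a little care — and the closest thing to an obstacle in an otherwise formal argument — is precisely this appeal to injectivity in the Hermite case: one must note that it is $P\oplus B^k$ (a finitely generated projective $B$-module, being a direct summand of $B^n$), not merely $P$, whose image under base change is being compared with that of $B^n$, so that Theorem~\ref{teo3.1} genuinely applies to both sides; everything else is bookkeeping with the module-theoretic definitions.
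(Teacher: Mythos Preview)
Your argument is correct and is precisely the ``straightforward'' deduction the paper has in mind: the paper offers no separate proof of the corollary beyond the sentence ``Theorem~\ref{teo3.1} leads straightforwardly to the following,'' and your base-change bookkeeping is exactly how one unpacks that remark. The only minor comment is that your careful observation about applying injectivity to $P\oplus B^k$ rather than to $P$ alone is well taken but not something the paper flags, since it regards the whole derivation as routine.
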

 
Let $B,C$ be commutative unital complex Banach algebras and
 $\|\cdot\|_\alpha$ be a norm on the  algebraic tensor product
$B\otimes C$. Let $\mathfrak A:=B\widehat{\otimes}_\alpha C$ be the completion of $B\otimes C$ with respect to $\|\cdot\|_\alpha$. 
Suppose that $\mathfrak A$ is a Banach algebra. 
Identifying $B$ and $C$ with subalgebras $B\otimes 1_C$ and $1_B\otimes C$ of $\mathfrak A$ (here $1_B$ and $1_C$ are units of $B$ and $C$), we assume that the triple $B,C,\mathfrak A$ satisfies condition \eqref{eq3.1}. Then we have:

\begin{proposition}\label{prop3.4}
If $B,C\in\mathscr C$, then $\mathfrak A\in\mathscr C$. 
\end{proposition}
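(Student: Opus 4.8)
The plan is to reduce the statement to a fact about the maximal ideal space and then invoke Lemma~\ref{lem3.2} together with a characterization of spaces of trivial shape. First I would recall that by Lemma~\ref{lem3.2}, the hypothesis that $B,C,\mathfrak A$ satisfy condition \eqref{eq3.1} (applied with the roles of $B$ and $C$ as closed subalgebras of $\mathfrak A=B\widehat\otimes_\alpha C$) gives a homeomorphism $M(\mathfrak A)\cong M(B)\times M(C)$. Thus the claim $\mathfrak A\in\mathscr C$ is equivalent to: \emph{if $M(B)$ and $M(C)$ are compact Hausdorff spaces of trivial shape, then $M(B)\times M(C)$ is of trivial shape.} So the whole proposition is really the statement that trivial shape is preserved under (finite) products of compact Hausdorff spaces.

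Next I would prove this topological fact. The clean way is to use the characterization via ANRs and approximation by nerves, exactly as in the proof of Theorem~\ref{teo3.1}. Write $K:=M(B)$ and $L:=M(C)$, and let $f:K\times L\to Y$ be a continuous map into an ANR $Y$. Present $K=\varprojlim K_i$ and $L=\varprojlim L_j$ as inverse limits of finite-dimensional compact simplicial complexes (canonical maps of finite open coverings, as in the proof of Theorem~\ref{teo2.8}), so that $K\times L=\varprojlim(K_i\times L_j)$ with $K_i\times L_j$ again a compact simplicial complex, hence an ANR. By Lemma~\ref{proposition_25_may_2022_1510}(1), $f$ factors up to homotopy through some $K_i\times L_j$, i.e. $f\homeq g\circ(\pi_i\times\pi_j)$ for some continuous $g:K_i\times L_j\to Y$. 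Now fix a point $y_0\in L_j$ and consider the map $H:L\to C(K_i,Y)$ defined by $H(y)(x)=g(x,\pi_j(y))$; since $K_i$ is a compact metrizable ANR and $Y$ is an ANR, the function space $C(K_i,Y)$ with the uniform topology is an ANR (as in the proof of Theorem~\ref{teo3.1}, citing \cite[Chapter~VI, Theorem~2.4]{Hu}), and $H$ is continuous because a continuous map between compact metric spaces is uniformly continuous. Since $L$ is of trivial shape, $H$ is homotopic to a constant; concretely this produces a homotopy between $g$ and the map $(x,y)\mapsto g(x,y_0)$, which factors through $K_i$. Hence $f$ is homotopic to a map factoring through $K_i$; since $K$ is of trivial shape, the resulting map $K\to K_i\to Y$ — after composing with $\pi_i$ — is null-homotopic, so $f$ itself is null-homotopic. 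Therefore $K\times L$ is of trivial shape.

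Putting the two parts together: $M(\mathfrak A)\cong M(B)\times M(C)$ is of trivial shape, so $\mathfrak A\in\mathscr C$ by definition, completing the proof.

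The step I expect to require the most care is the second part — verifying that $C(K_i,Y)$ is an ANR and that $H$ is continuous, and correctly extracting from the null-homotopy of $H$ an honest homotopy of $g$ to a map independent of the second variable. This is exactly the maneuver already carried out in the proof of Theorem~\ref{teo3.1} (with $\textrm{\bf Gr}_n(\mC^m)$ replaced by the general ANR $Y$), so the argument can be quoted almost verbatim; the only genuine obstacle is making sure the inverse-limit reductions on both factors are compatible, which they are because the product of the two inverse systems is again an inverse system of compact simplicial complexes with limit $K\times L$.
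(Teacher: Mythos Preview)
Your proof is correct and follows essentially the same route as the paper: reduce via Lemma~\ref{lem3.2} to showing that the product of two compact Hausdorff spaces of trivial shape has trivial shape, then invoke the function-space ANR maneuver from the proof of Theorem~\ref{teo3.1} (with a general ANR $Y$ in place of the Grassmannian). The only difference is that you present both factors as inverse limits of simplicial complexes, whereas the paper (implicitly, via the argument of Theorem~\ref{teo3.1}) decomposes only $M(B)$ --- an unnecessary but harmless extra step, since only the domain $K_i$ of the function space $C(K_i,Y)$ needs to be a compact ANR.
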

\begin{proof}
According to Lemma \ref{lem3.2}, $M(\mathfrak A)$ is homeomorphic to the direct product $M(B)\times M(C)$. Let $f:M(B)\times M(C)\rightarrow X$ be a continuous map 
 to an ANR $X$.
Since by the hypotheses $M(C)$ is of trivial shape, repeating the arguments of the proof of Theorem \ref{teo3.1}, we obtain that $f$ is homotopic to the continuous map $f_o: M(B)\times M(C)\rightarrow X$, where $f_o(x,y):=f(x,o)$ for all $(x,y)\in M(B)\times M(C)$; here $o\in M(C)$ is a fixed point. In turn, since $M(B)$ is of trivial shape, the restriction $f_o|_{M(B)}$ is homotopic to a constant map. This implies that $f_o$ (and, hence, $f$) is homotopic to a constant map. Thus, $M(B)\times M(C)$ is of trivial shape, as required.
\end{proof}


\section{Examples}
\label{section_examples}
In Sections~6.1 and 6.2 we present some examples of algebras of class $\mathscr C$. We restrict ourselves to the case of semisimple algebras only. The choice of examples of Sections~6.3 and 6.4  is  based on  the research interests of the authors. 

\subsection{Banach algebras of symmetric functions}
\label{subsec_6.1}$\;$
Recall that the {\em polynomial convex hull} of a bounded set $K\subset \mC^n$  is the minimal polynomially convex set $\widehat K\Subset\mC^n$ containing $K$, i.e.,
$$
\widehat{K}:=\bigl\{\bbz \in \mC^n: |p(\bbz)|\leq \sup_{\bbw \in K} |p(\bbw)| \textrm{ for all } 
p\in \mC[z_1,\dots,z_n]\bigr\}.
$$

If $A$  is a finitely generated semisimple commutative unital complex Banach algebra with generators $f_1,\dots, f_n$, then the map 
\begin{equation}\label{eq4.4}
F(x):=(x(f_1),\dots,x(f_n)),\quad x\in M(A),
\end{equation}
is an embedding with image a polynomially convex subset of $\mC^n$ (see, e.g., \cite[Chap. III, Theorem~1.4]{Gam}).  If we identify $M(A)$ with $F(M(A))$, then  $A$ becomes a (not necessarily closed) subalgebra of $C(F(M(A)))$ such that  the restriction of polynomial algebra  $\mC[z_1,\dots,z_n]$ to $F(M(A))$ is dense in $A$. In what follows, we consider a more general situation of a unital complex Banach algebra $A\subset C_b(K)$ on a  subset $K\Subset\mC^n$ such that {\em $\mC[z_1,\dots,z_n]|_K$ is dense in $A$}. 
 We also assume that $A$ is {\em weakly inverse closed}, that is, it possesses the following property:
\begin{itemize}
\item[({\rm wi})]
\quad {\em If $f\in A$, and $\sup_{K}|f(x)|<1$, then $\frac{1}{1-f}\in A$.}
\end{itemize}
Under these conditions, $M(A)$ is naturally identified with $\widehat K$, see, e.g., \cite[Proposition\,1]{Roy} and \cite[Chap. III, Theorem~1.4]{Gam}. 

Let $G\subset GL_n(\mC)$ be a finite group of order $|G|$.
We say that the Banach algebra $A\subset C(K)$ is {\em $G$-invariant} if
$G(K)=K$ and $A$ is invariant with respect to the action of $G$ on $C(K)$:   $g^*(f)=f\circ g$ for  all $f\in C(K)$, $g\in G$. (Since $\mC[z_1,\dots, z_n]$ is invariant with respect to the action of $G$, this implies that $G(\widehat K)=\widehat K$ as well.) In this case, each $g^*: A\to A$, $g\in G$, is an automorphism of $A$, and since $A$ is semisimple, each $g^*$ is continuous (see, e.g., \cite[\S24B, Theorem]{Loo}).

For a $G$-invariant algebra $A$, the subalgebra $A_G\subset A$ of elements invariant with respect to the action of $G$ on $A$ (i.e., such $f\in A$ that
$g^*(f)=f$ for all $g\in G$) is said to be {\em $G$-symmetric}.

Let $\mathcal P_G\subset \mC[z_1,\dots, z_n]$ be the subalgebra of polynomials invariant with respect to the action of $G$. If $f\in A_G$ and  $(p_j)_{j\in\mN}\subset\mC[z_1,\dots, z_n]$ is a sequence such that $ f=\lim_{j\to\infty}p_{j}|_{K}$, then all
$ \tilde p_j:=\frac{1}{|G|}\sum_{g\in G} g^*(p_j)\in \mathcal P_G$ and $f=\lim_{j\to\infty}\tilde p_j|_K$ (because all $g^*:A\to A$ are continuous). 
Hence, $\mathcal P_G|_K$ is a dense subalgebra of $A_G$.  

By the Hilbert basis theorem, see, e.g., \cite[Theorem~7.5]{AM}, there exist  homogeneous  polynomials $h_1,\dots, h_m\in \mC[z_1,\dots,z_n]$, $m\ge n$, invariant with respect  to the action of $G$ 
 which generate $\mathcal P_G$.\footnote{For instance, if $G=S_n$ acts by permutation of coordinates, then we can choose $h_i$ to be the elementary symmetric polynomial of degree $i$, $1\le i\le n$.}
Hence, $A_G$ is generated by elements $h_1|_K,\dots,
 h_m|_K$. In particular, the continuous map $F_G: M(A_G)\to \mC^m$,
\begin{equation}\label{eq4.5}
F_G(x):=(x(h_1|_K),\dots,x(h_m|_K)),\quad x\in M(A_G),
\end{equation}
embeds $M(A_G)$ into $\mC^m$ as a polynomially convex subset. 
On the other hand, by Lemma~\ref{lem3.5} 
we obtain that $M(A_G)$ can be identified with the quotient space $\widehat K/G$. In this identification, if $\pi: \widehat K\to \widehat K/G$ is the quotient map, then
$A_G$ is isomorphic to a subalgebra $\tilde A_G\subset C(\widehat K/G)$ such that $ \pi^*(f)|_K\in A_G$ for all $f\in\tilde A_G$. Let $\tilde h_i\in \tilde A_G$ be such that $\pi^*(\tilde h_i)=h_i|_{\widehat K}$, $1\le i\le m$. Then the map $F_G$ becomes 
\[
F_G(x):=(
\tilde h_1(x)),\dots,\tilde h_m(x)),\quad x\in \widehat K/G.
\] 
This implies that the map $H:=(h_1,\dots, h_m):\mC^n\to\mC^m$ maps
$\widehat K$ onto $F_G(\widehat K/G)$.

\begin{remark}\label{rem4.1}
The image $H(\mC^n)$ is an $n$-dimensional complex algebraic subvariety of $\mC^m$. For each polynomially convex set $S\subset\mC^n$ invariant with respect to the action of $G$ on $\mC^n$, the uniform algebra $P(S)$ (defined as
the closure in $C(S)$ of  the algebra $\mC[z_1,\dots, z_n]|_{S}$) 
is $n$-generated, $G$-invariant, and $M(P(S))=S$. Hence,
the map $H$ determines a one-to-one correspondence $S\mapsto H(S)$ between $G$-invariant polynomially convex subsets of $\mC^n$ and
polynomially convex subsets of $\mC^m$ lying in $H(\mC^n)$. 
\end{remark}
\begin{proposition}
\label{prop4.2}
Suppose $A\subset C(K)$ is $G$-invariant for some finite  group $G\subset GL_n(\mC)$ and $K\Subset\mC^n$ star-shaped with respect to the origin. Then $A_G\in\mathscr C$.
In particular, $A_G$ is projective free.
\end{proposition}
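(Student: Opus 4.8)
The plan is to use the identification $M(A_G)\cong\widehat K/G$ recorded just above the statement (via Lemma~\ref{lem3.5}) and to prove directly that this quotient is contractible. Then $\widehat K/G$ is of trivial shape, so $A_G\in\mathscr C$; and since its maximal ideal space $\widehat K/G$ is contractible, $C(\widehat K/G)$ is projective free (see \cite{Vas}, or Proposition~\ref{prop2.1}), whence $A_G$ is projective free by Theorem~\ref{14_may_2022_12:42}(1).

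First I would check that the polynomial hull $\widehat K$ is again star-shaped with respect to the origin. If $\bbz\in\widehat K$ and $t\in[0,1]$, then for every $p\in\mC[z_1,\dots,z_n]$ the function $\bbw\mapsto p(t\bbw)$ is again a polynomial, so
\[
|p(t\bbz)|\le\sup_{\bbw\in K}|p(t\bbw)|=\sup_{\bbw\in tK}|p(\bbw)|\le\sup_{\bbw\in K}|p(\bbw)|,
\]
using $tK\subset K$; hence $t\bbz\in\widehat K$. Thus $H\colon\widehat K\times[0,1]\to\widehat K$, $H(\bbz,t):=t\bbz$, is a well-defined continuous contraction of $\widehat K$ onto the point $\mathbf 0$ (note $\mathbf 0\in K$ automatically, as soon as $K\neq\emptyset$).

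Next I would push $H$ down to the quotient. Since $G\subset GL_n(\mC)$ acts linearly, scalar multiplication commutes with the $G$-action, so $H(g\bbz,t)=g\,H(\bbz,t)$ for all $g\in G$; hence $\pi\circ H\colon\widehat K\times[0,1]\to\widehat K/G$ is constant on $G$-orbits in the first variable, where $\pi\colon\widehat K\to\widehat K/G$ is the quotient map. Because $\widehat K\times[0,1]$ is compact and $(\widehat K/G)\times[0,1]$ is (compact) Hausdorff, the continuous surjection $\pi\times\mathrm{id}_{[0,1]}$ is closed, hence a quotient map, and therefore $\pi\circ H$ factors through a continuous map $\overline H\colon(\widehat K/G)\times[0,1]\to\widehat K/G$. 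One checks $\overline H(\cdot,1)=\mathrm{id}$ and $\overline H(\cdot,0)\equiv\pi(\mathbf 0)$, so $\widehat K/G$ is contractible. Finally, a contractible space is of trivial shape (for a continuous $f$ into an ANR one has $f=f\circ\mathrm{id}\homeq f\circ\text{const}$), giving $M(A_G)=\widehat K/G$ of trivial shape and $A_G\in\mathscr C$; the projective freeness follows as above.

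The argument is brief once the reduction $M(A_G)\cong\widehat K/G$ is available, so I do not anticipate a genuine obstacle; the only point requiring a touch of care is the continuity of the descended contraction $\overline H$, which is why one records that $\pi\times\mathrm{id}_{[0,1]}$ is a quotient map. (One could also bypass descending the contraction: a map $f\colon\widehat K/G\to Y$ into an ANR pulls back to the $G$-invariant map $f\circ\pi$, and $(f\circ\pi)\circ H$ is then a $G$-invariant null-homotopy of $f\circ\pi$ which descends; or one could invoke Corollary~\ref{theorem_24_may_2022_0929} with $B=\mC$.)
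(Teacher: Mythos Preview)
Your argument is correct. Both your proof and the paper's establish that $M(A_G)\cong\widehat K/G$ is contractible via the scalar contraction $t\mapsto t\cdot\bbz$, but they package this differently. The paper uses the embedding $F_G\colon \widehat K/G\hookrightarrow\mC^m$ furnished by the homogeneous generators $h_1,\dots,h_m$ of $\mathcal P_G$: since $H(t\bbz)=(t^{\deg h_1}h_1(\bbz),\dots,t^{\deg h_m}h_m(\bbz))$, the weighted scaling $D(\bbw,t)=(t^{\deg h_1}w_1,\dots,t^{\deg h_m}w_m)$ on $\mC^m$ restricts to a contraction of $F_G(\widehat K/G)$. You instead descend the linear contraction $t\bbz$ directly through the quotient map $\pi$, using that $G\subset GL_n(\mC)$ commutes with scalars; this avoids the ambient embedding and the choice of generators altogether, at the minor cost of the quotient-map bookkeeping for continuity of $\overline H$. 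Your route is a bit more elementary, while the paper's has the virtue of exhibiting the contraction explicitly in the polynomially convex model $F_G(\widehat K/G)\subset\mC^m$. Note also that your explicit verification that $\widehat K$ inherits star-shapedness from $K$ supplies a step the paper's proof uses tacitly when asserting that $D$ preserves $F_G(\widehat K/G)$.
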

\begin{proof}
Since  $K$ is star-shaped with respect to the origin and all $h_i$ are homogeneous polynomials, we have
$t\cdot\bbz\in K$ for all $\bbz\in K$, $t\in [0,1]$ and 
\[
H(t\cdot\bbz)=\bigl(t^{{\rm deg}\,h_1}\cdot h_1(\bbz),\dots, t^{{\rm deg}\,h_m}\cdot h_m(\bbz)\bigr).
\]
We set for $\bbw=(w_1,\dots, w_m)\in\mC^m$ and $t\in [0,1]$,
\[
D(\bbw,t):=\bigl(t^{{\rm deg}\,h_1}\cdot w_1,\dots, t^{{\rm deg}\,h_m}\cdot w_m\bigr).
\]
Then $D:\mC^m\times [0,1]\to\mC^m$ is continuous, maps $F_G(\widehat K/G)\times [0,1]$ into $F_G(\widehat K/G)$ and is such that $D(\cdot, 1)={\rm id}_{F_G(\widehat K/G)}$ and $D(\cdot, 0)={\bf 0}$. Hence, $F_G(\widehat K/G)$ is contractible and therefore $A_G\in\mathscr C$, as required.
\end{proof}

Let us consider several explicit examples of nonuniform  algebras $A$.

\smallskip 

\noindent (1) Let $\omega:  \mR_+\to\mR_+$ be a nondecreasing concave function, 
not identically zero, and  such that $\omega(0)=0$. We set 
\[
d(z_1,z_2):=\omega(\|z_1-z_2\|),\quad z_1,z_2\in\mC^n,
\]
where $\|\cdot\|$ is the Euclidean norm on $\mC^n\, (\cong\mR^{2n})$.

Then $d$ is a metric on $\mC^n$ compatible with its topology.

For a bounded set $K\subset \mC^n$, let $\textrm{Lip}_d(K) \subset C(K)$ be the Banach algebra of  complex-valued Lipschitz functions on $K$ with respect to $d$, equipped with norm
\[
\|f\|_{\textrm{Lip}}:=\sup_K |f|+\sup_{x\ne y}\frac{|f(x)-f(y)|}{d(x,y)}.
\]
Then $A$ is the completion in $\textrm{Lip}_d(K)$ of the algebra
$\mC[z_1,\dots, z_n]|_K$.  
Clearly, $A$ is weakly inverse closed, see  (wi).

\smallskip 

\noindent (2) Let $C^p(K)\subset C(K)$ be the restriction to $K$ of the algebra $C^p(\mC^n)$ of  bounded complex-valued functions $f$ on $\mC^n$ having bounded  continuous partial derivatives up to order $p$ with the norm the sum of supremum norms of $f$ and of all its partial derivatives. Equipped with the quotient norm, $C^p(K)$ becomes a unital commutative Banach algebra. Then 
$A$ is the completion in $C^p(K)$ of the algebra
$\mC[z_1,\dots, z_n]|_K$. Clearly, $A$  is weakly inverse closed, see  (wi).

\smallskip 

 If $G\subset GL_n(\mC)$ is a finite group and $K$ is invariant with respect to the action of $G$ on $\mC^n$, then
it is easy to check that algebras $\textrm{Lip}_d(K)$ and $C^p(K)$ are $G$-invariant. Since $\mC[z_1,\dots, z_n]|_K$ is invariant with respect to the corresponding action of $G$, this implies that the algebras $A_G$ of $G$-symmetric $\textrm{Lip}_d(K)$ and $C^p(K)$ functions are $G$-invariant as well. If, in addition $K$ is star-shaped with respect to the origin, then by  Proposition~\ref{prop4.2}, these algebras are projective free.

\subsection{Bohr-Wiener algebras} 
 \label{subsec_6.2} Let $G$ be a connected compact abelian group and let $\Gamma$ be its (multiplicative) character group. Thus $\Gamma$ consists of continuous homomorphisms of $G$ into the group $\mT$ of unimodular complex numbers and separates points of $G$. As $G$ is connected, $\Gamma$ can be made into a linearly ordered group (see e.g. \cite[8.1.8]{Rud}).   Let $\preccurlyeq$ be a fixed linear order such that $(\Gamma, \preccurlyeq)$ is an ordered group. We henceforth write $\Gamma$ additively and denote its identity element by $0$.  

Standard widely used examples of $\Gamma$ are $\mZ^k$ and $\mR^k$ ($k\in \mZ_+$) with a lexicographic ordering; here we use usual addition in $\mZ$ and in $\mR$, and  the discrete topology in both cases. 

For a nonempty set $I$, we denote by $\ell^1(I)$ the complex Banach space  of complex-valued sequences $\bba=(a_i)_{i\in I}$ with pointwise operations and the norm 
$$
\|\bba\|_1:=\sum\limits_{i\in I}|a_i| .
$$ 
If $\emptyset \neq J\subset I$, then we view $\ell^1(J)$ as a subset of $\ell^1(I)$. 

If $I=\Gamma$, then $\ell^1(\Gamma)$ is  
a commutative unital complex Banach algebra 
with  multiplication given by convolution: 
$$
(\bba \ast \bbb)_j=\sum\limits_{k\in \Gamma} a_k b_{j-k},\quad 
\bba=(a_j)_{j\in \Gamma},\;\bbb=(b_j)_{j\in \Gamma}\in \ell^1(\Gamma).
$$ 

The algebra $\ell^1(\Gamma)$ is semisimple and its maximal ideal space is $G$. The Gelfand transform
 $\,\hat{\,}\,: \ell^1(\Gamma)\to C(G)$ is given by the formula
\begin{equation}
\label{eq4.1}
\hat{\bba}(g):=\sum\limits_{j\in \Gamma}\, a_j \bbe_j(g), \quad g\in G,\quad \bba=(a_j)_{j\in \Gamma}\in \ell^1(\Gamma).
\end{equation}
Here $\bbe_j(g):=\langle j, g\rangle\in \mT$ is the action of $j\in \Gamma$ on  $g\in G$. 

The function $\hat{\bba}$ is called the {\em symbol} of  $\bba$ with  {\em Bohr-Fourier coefficients} $a_j$ and  with the {\em Bohr-Fourier spectrum} $\{j\in \Gamma: a_j\neq 0\}$. The image of $\ell^1(\Gamma)$ under $\,\hat{\,}\,$ is denoted by $W(G)$. 
For a subsemigroup $\Sigma$ of $\Gamma$, we denote by $W(G)_\Sigma$ 
the algebra of symbols of elements in $\ell^1(\Sigma)$. We let $C (G)_\Sigma$ be the closure of $W (G)_\Sigma$ in $C(G)$ (so that $C(G)_\Gamma =C(G)$). The notions of Bohr-Fourier  
 coefficients and spectrum are extended from functions in 
 $W(G)$ to $C(G)$ by continuity. The Bohr-Fourier spectrum of an element of $C(G)$ is at most countable, and  $C (G)_\Sigma$ coincides with the set of functions having the
 Bohr-Fourier spectra in $\Sigma$.
 
 Assume that $\Sigma\subset\Gamma$ is {\em pointed}, i.e., such that
 \begin{equation}\label{eq4.2}
 0\in \Sigma,\quad {\rm and}\quad \Sigma\cap (-\Sigma)=\{0\}.
 \end{equation}
(E.g.,  if $\Gamma_+=\{j \in \Gamma: 0\preccurlyeq j\}$, $\Gamma_{-}=\{j\in \Gamma: j \preccurlyeq 0\}$, then  $\Gamma_{+}$ and $\Gamma_{-}$ are pointed subsemigroups.)

 \begin{Theorem}[\mbox{\cite[Theorem~1.2]{BRS}}]
If $\Sigma$ is a pointed subsemigroup, then the algebras $W(G)_\Sigma$ and $C(G)_\Sigma$ belong to the class $\mathscr C$.
\end{Theorem}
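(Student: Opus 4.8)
The plan is to identify the maximal ideal spaces of both algebras with the space $\widehat\Sigma$ of semicharacters of $\Sigma$ and then to show that $\widehat\Sigma$ is of trivial shape. The symbol map is an algebra isomorphism $\ell^1(\Sigma)\xrightarrow{\ \sim\ }W(G)_\Sigma$, so $M(W(G)_\Sigma)=M(\ell^1(\Sigma))$; a nonzero homomorphism $\varphi$ of $\ell^1(\Sigma)$ is determined by the \emph{semicharacter} $\chi_\varphi(s):=\varphi(\delta_s)$, which is a function $\Sigma\to\overline{\mD}$ with $\chi_\varphi(0)=1$ and $\chi_\varphi(s+s')=\chi_\varphi(s)\chi_\varphi(s')$, so $M(\ell^1(\Sigma))$ is precisely the set $\widehat\Sigma$ of all such semicharacters, topologised by pointwise convergence. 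Since $C(G)_\Sigma$ is the uniform closure of $W(G)_\Sigma$ and passing to the uniform closure does not change the maximal ideal space, $M(C(G)_\Sigma)=\widehat\Sigma$ as well. As membership in $\mathscr C$ depends only on the maximal ideal space, it suffices to prove that $\widehat\Sigma$ is of trivial shape.

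First I would pass to finitely generated pieces. Write $\Sigma=\bigcup_{F}F$ as the directed union of its finitely generated subsemigroups $F$ containing $0$; each such $F$ is automatically pointed, being contained in $\Sigma$. Restriction of semicharacters induces a continuous bijection $\widehat\Sigma\to\varprojlim_{F}\widehat F$, which is a homeomorphism since both sides are compact Hausdorff; here $\widehat F=M(\ell^1(F))$ is a nonempty compact Hausdorff space. By Lemma~\ref{proposition_25_may_2022_1510}(1), any continuous map from $\widehat\Sigma$ to an ANR factors, up to homotopy, through one of the canonical projections $\widehat\Sigma\to\widehat F$; hence, once we know each $\widehat F$ has trivial shape, every such map is null-homotopic and $\widehat\Sigma$ has trivial shape. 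So it remains to treat the finitely generated case, where I will in fact show that $\widehat F$ is contractible.

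For finitely generated pointed $F$ the group $\langle F\rangle$ is finitely generated and torsion-free ($\Gamma$ is torsion-free, $G$ being connected), so $V:=\langle F\rangle\otimes_{\mZ}\mR$ is finite-dimensional and $C:=\mathrm{cone}(F)\subset V$ is a finitely generated, hence closed, convex cone. The key point is that $C$ is \emph{salient} (contains no line): a line $\mR v\subset C$ with $v\ne0$ would give, on expressing $v$ and $-v$ as nonnegative real combinations of elements of $F\setminus\{0\}$ and then picking a nonzero rational point of the resulting rational polyhedral cone of coefficients and clearing denominators, an identity $\sum_{i}n_ig_i=0$ with $g_i\in F\setminus\{0\}$, $n_i\in\mZ_{\ge0}$, not all $n_i=0$; choosing $i_0$ with $n_{i_0}\ge1$ gives $g_{i_0}=-\bigl(\sum_{i\ne i_0}n_ig_i+(n_{i_0}-1)g_{i_0}\bigr)$, so $g_{i_0}\in F\cap\bigl((-F)\cup\{0\}\bigr)=\{0\}$, contradicting $g_{i_0}\ne0$. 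A closed salient cone in a finite-dimensional space carries a linear functional that is strictly positive off the origin, so there is an additive map $\ell\colon\langle F\rangle\to\mR$ with $\ell(0)=0$ and $\ell(s)>0$ for all $s\in F\setminus\{0\}$. Setting $H(\chi,r)(s):=r^{\ell(s)}\chi(s)$ (with $0^{0}:=1$) then defines a map $H\colon\widehat F\times[0,1]\to\widehat F$: it takes values in $\widehat F$ because $\ell$ is additive (multiplicativity), $\ell\ge0$ and $r\le1$ (modulus $\le1$), and $\ell(0)=0$ (value $1$ at $0$); it is continuous, being continuous in each coordinate, the case $r=0$ being covered by $\ell(s)>0$ for $s\ne0$; and $H(\cdot,1)=\mathrm{id}_{\widehat F}$, while $H(\cdot,0)$ is the constant map to the semicharacter $\chi_\infty$ with $\chi_\infty(0)=1$ and $\chi_\infty(s)=0$ otherwise — which belongs to $\widehat F$, again because $F$ is pointed. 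Thus $\widehat F$ is contractible, and the proof is complete. The main point to watch is the reduction to finitely generated $F$: one cannot run $H$ directly on $\widehat\Sigma$, since for infinitely generated $\Sigma$ — already for $\Gamma=\mZ^2$ with the lexicographic order and $\Sigma=\Gamma_+$ — the cone $\mathrm{cone}(\Sigma)$, while salient, is not closed and admits no linear functional strictly positive off the origin.
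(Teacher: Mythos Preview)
The paper does not give its own proof of this statement; it simply quotes \cite[Theorem~1.2]{BRS}. So there is no in-paper argument to compare against, and your proposal should be read as an independent proof sketch.

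Your main line of attack---identifying $M(\ell^1(\Sigma))$ with the semicharacter space $\widehat\Sigma$, writing $\widehat\Sigma=\varprojlim_F\widehat F$ over finitely generated pointed $F$, and contracting each $\widehat F$ via a strictly positive linear functional on the salient cone $\mathrm{cone}(F)$---is correct and is essentially the argument of \cite{BRS}. The salience proof and the homotopy $H(\chi,r)(s)=r^{\ell(s)}\chi(s)$ are fine, and your closing remark about why one cannot run $H$ directly on $\widehat\Sigma$ is apt.

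There is, however, a genuine gap in your treatment of $C(G)_\Sigma$. You write that ``passing to the uniform closure does not change the maximal ideal space'' and conclude $M(C(G)_\Sigma)=\widehat\Sigma$. The result you are invoking (Royden, used in the paper before Corollary~\ref{cor4.2}) says that $M(\bar A)=M(A)$ where $\bar A$ is the closure of the Gelfand image of $A$ in $C(M(A))$. Applied to $A=\ell^1(\Sigma)$ this yields the closure in $C(\widehat\Sigma)$, \emph{not} the closure in $C(G)$; the algebra $C(G)_\Sigma$ is, by the paper's definition, the latter. These two closures coincide only if the sup norm over $G$ and the sup norm over $\widehat\Sigma$ agree on $W(G)_\Sigma$, equivalently if the image of $G$ in $\widehat\Sigma$ is a boundary. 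That is true, but it is a nontrivial Arens--Singer type statement (one shows each $\chi\in\widehat\Sigma$ admits a representing probability measure on $G$, so that $|\sum a_s\chi(s)|\le\|\sum a_s\bbe_s\|_{C(G)}$), and it does not follow from the Royden result alone. Alternatively, one can bypass the full identification: observe that $M(C(G)_\Sigma)$ embeds as a closed subset of $\widehat\Sigma$ containing the ``vertex'' $\chi_\infty$ (since $f\mapsto\int_G f\,d\mu$ is sup-continuous), and check that your contraction $H$ actually preserves this subset because $\bbe_s\mapsto r^{\ell(s)}\bbe_s$ extends to a sup-norm contraction of $C(G)_F$; but this too needs an argument. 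Either way, this step should be justified rather than asserted.
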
 

Next, for a commutative unital complex Banach algebra $B$ with norm $\|\cdot\|_B$, we let $\ell^1(\Sigma ,B)$ denote
the complex Banach space  of $B$-valued sequences $\bbb=(b_j)_{j\in \Sigma }$ with pointwise operations and the norm 
$$
\|\bbb\|_{1,B}:=\sum\limits_{j\in \Sigma }\|b_j\|_B .
$$ 
Then the projective tensor product $B\widehat{\otimes}_\pi W(G)_{\Sigma}$ consists of $B$-valued continuous functions on $G$ 
\[
\widetilde{\bbb}(g):=\sum\limits_{j\in \Sigma }\, b_j \bbe_j(g), \quad g\in G,\quad \bbb=(b_j)_{j\in \Sigma }\in \ell^1(\Sigma,B),
\]
with norm $\|\widetilde{\bbb}\|:=\|\bbb\|_{1,B}$.

\noindent In turn, the injective tensor product 
$B\widehat{\otimes}_\varepsilon C(G )_{\Sigma }$ is
the closure of the commutative algebra $B\widehat{\otimes}_\pi W(G )_{\Sigma }$ in $C(G ,B)$
equipped with norm $\|f\|:=\max_{g\in G }\|f(g)\|_B$.

Now Theorem \ref{teo3.1} implies the following:
 
\begin{Corollary}
If $\Sigma$ is a pointed subsemigroup, then the correspondences 
\[
\begin{split}
P\cong & B\otimes_B\!P\stackrel{({\rm id}_B\otimes 1)\otimes {\rm id}_P}{\xrightarrow{\hspace*{15mm}}} (B\widehat{\otimes}_\pi W(G )_{\Sigma })\otimes_{B}\!P \\
P\cong& B\otimes_B\!P\stackrel{({\rm id}_B\otimes 1)\otimes {\rm id}_P}{\xrightarrow{\hspace*{15mm}}} (B\widehat{\otimes}_\varepsilon C(G )_{\Sigma })\otimes_{B}\!P
\end{split}
\]
determine bijections between isomorphism classes of finitely generated projective $B$ modules and finitely generated projective $B\widehat{\otimes}_\pi W(G )_{\Sigma }$ and $B\widehat{\otimes}_\varepsilon C(G )_{\Sigma }$ modules, respectively.

In particular, if $B$ is projective free $($or Hermite$)$, then the algebras  $B\widehat{\otimes}_\pi W(G )_{\Sigma }$ and $B\widehat{\otimes}_\varepsilon C(G )_{\Sigma }$ are  projective free $($respectively, Hermite$)$. 
\end{Corollary}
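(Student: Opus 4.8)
The plan is to verify that each of the two algebras $\mathfrak A_\pi := B\widehat{\otimes}_\pi W(G)_\Sigma$ and $\mathfrak A_\varepsilon := B\widehat{\otimes}_\varepsilon C(G)_\Sigma$ fits into the abstract framework of Theorem \ref{teo3.1}, with the roles $B \rightsquigarrow B$, $C \rightsquigarrow W(G)_\Sigma$ (respectively $C(G)_\Sigma$), and $\mathfrak A \rightsquigarrow \mathfrak A_\pi$ (respectively $\mathfrak A_\varepsilon$). Three things must be checked: (i) $B$ and $C$ are unital closed subalgebras of $\mathfrak A$ whose generated subalgebra is dense in $\mathfrak A$; (ii) the norm inequality \eqref{eq3.1} holds; and (iii) $C \in \mathscr C$. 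Point (iii) is exactly the content of the quoted \cite[Theorem~1.2]{BRS}, which asserts that $W(G)_\Sigma$ and $C(G)_\Sigma$ belong to $\mathscr C$ whenever $\Sigma$ is a pointed subsemigroup — so nothing remains to be done there. Once (i)--(iii) are in place, Theorem \ref{teo3.1} applied with the embedding $i_B = {\rm id}_B \otimes 1$ yields precisely the stated bijections \eqref{eq3.2}, and Corollary \ref{theorem_24_may_2022_0929} (equivalently, the final assertion of Theorem \ref{teo3.1} combined with the matrix characterizations of projective freeness and Hermiteness) gives the ``in particular'' clause.

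For point (i), I would first identify $B$ inside each algebra with $B \otimes 1 = \{\widetilde{\bbb} : \bbb$ supported at $0 \in \Sigma\}$, i.e.\ constant $B$-valued functions on $G$; since $0 \in \Sigma$ by the pointedness condition \eqref{eq4.2}, this is well-defined, and it is plainly a closed unital subalgebra (it is isometrically a copy of $B$ in both the $\ell^1$-type norm $\|\cdot\|_{1,B}$ and the sup norm). Likewise $C = W(G)_\Sigma$ (resp.\ $C(G)_\Sigma$) sits inside via $1_B \otimes c$. That the algebra generated by $B\otimes 1$ and $1_B \otimes C$ is dense follows from the description of $B\widehat{\otimes}_\pi W(G)_\Sigma$ as the $\ell^1(\Sigma,B)$-completion of finite sums $\sum_j b_j \bbe_j$ (finite sums being exactly elements of the generated subalgebra), and $\mathfrak A_\varepsilon$ is defined as the closure of $\mathfrak A_\pi$ in $C(G,B)$, so density is immediate there too. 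For point (ii), I would recall from \cite[\S1.3]{DHT} (cited right after condition \eqref{eq3.1}) that the inequality \eqref{eq3.1} holds with constant $c=1$ whenever $\mathfrak A$ is the projective tensor product of $B$ and $C$, and whenever $\mathfrak A$ is the injective tensor product of $B$ and $C$ provided one of the two factors is a uniform algebra; here $C(G)_\Sigma$ is a uniform algebra (a uniformly closed subalgebra of $C(G)$ containing the constants), so the injective case applies with $C = C(G)_\Sigma$ playing the role of the uniform factor. Thus \eqref{eq3.1} is satisfied in both cases.

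With hypotheses (i)--(iii) confirmed, Theorem \ref{teo3.1} directly gives that the correspondence $P \mapsto \mathfrak A \otimes_B P$ induced by $i_B = {\rm id}_B\otimes 1$ is a bijection between isomorphism classes of finitely generated projective $B$-modules and those of $\mathfrak A$-modules, which is the displayed statement of the Corollary for each of $\mathfrak A_\pi$ and $\mathfrak A_\varepsilon$. For the final ``in particular'': if $B$ is projective free, then every finitely generated projective $B$-module is free, so under the bijection every finitely generated projective $\mathfrak A$-module is (isomorphic to one of the form) $\mathfrak A \otimes_B B^m \cong \mathfrak A^m$, hence free — so $\mathfrak A$ is projective free; and if $B$ is Hermite, the same bijection restricted to stably free modules (it respects the operations $\oplus$ and $\otimes_B \mathfrak A^k$, since $\mathfrak A \otimes_B (M \oplus R^k) \cong (\mathfrak A\otimes_B M)\oplus \mathfrak A^k$) shows every finitely generated stably free $\mathfrak A$-module is free, so $\mathfrak A$ is Hermite. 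This last bookkeeping is exactly what is packaged in Corollary \ref{theorem_24_may_2022_0929}, which I would simply cite.

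The only genuinely non-routine point is the verification of condition \eqref{eq3.1} in the injective-tensor-product case, and even there the work is to observe that $C(G)_\Sigma$ is a uniform algebra (so that the cited result from \cite[\S1.3]{DHT} applies) — so I do not expect any real obstacle; the proof is essentially an assembly of Theorem \ref{teo3.1}, the quoted theorem of \cite{BRS}, and the standard tensor-norm estimates. If one wanted a self-contained estimate for \eqref{eq3.1} in the projective case, one notes that for $\bbb = \sum_j b_j\bbe_j \in \ell^1(\Sigma,B)$ and $\xi \in M(B)$ one has $\big\|\sum_j \xi(b_j)\bbe_j\big\|_{W(G)_\Sigma} = \sum_j |\xi(b_j)| \le \sum_j \|b_j\|_B = \|\bbb\|_{1,B}$, giving $c=1$ directly; the injective case then follows by taking closures in the sup norm. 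This makes the whole argument elementary modulo the two cited black boxes.
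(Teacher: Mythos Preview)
Your proposal is correct and follows exactly the route the paper takes: the Corollary is stated as an immediate consequence of Theorem~\ref{teo3.1} (the paper simply writes ``Now Theorem~\ref{teo3.1} implies the following'' with no further proof), and you have spelled out in detail precisely the hypothesis checks (closed unital subalgebras, density, condition~\eqref{eq3.1} via the projective/injective tensor remarks, and membership in $\mathscr C$ via \cite[Theorem~1.2]{BRS}) that the paper leaves implicit. Your additional self-contained estimate for \eqref{eq3.1} in the projective case and the explicit unwinding of the ``in particular'' clause are correct and merely elaborate what the paper takes for granted.
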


 \subsection{Algebras of holomorphic semi-almost periodic functions}
 \label{subsec_6.3}  Recall that $f \in C_b(\mR)$ is {\em almost
periodic} if the family $\{S_\tau f :\tau \in \mR\}$ of its translates, where 
 $(S_\tau f) (x) := f (x + \tau)$ ($ x \in \mR$), is relatively compact in $C_b(\mR)$. 
Let $AP (\mR)$ be the Banach algebra of almost periodic functions endowed with the supremum norm. 

Let $\mT$ be the boundary of the unit disc $\mD$, with the counterclockwise orientation. For $s := e^{it}$, $t \in [0, 2\pi)$, let
$\gamma^k_s(\delta) := \{se^{ikx} : 0 \leq  x < \delta < 2\pi\}$, $k \in \{-1, 1\}$, 
be two open arcs having $s$ as the right and the left endpoints (with respect
to the orientation), respectively.

A function $f \in \textrm{L}^\infty (:= \textrm{L}^\infty(\mT))$ is {\em semi-almost periodic} if
for any $s \in  \mT$, and any $\varepsilon > 0$ there exist a number 
$\delta = \delta(s, \varepsilon) \in  (0, \pi)$ and
functions $f_k : \gamma^k_s(\delta) \rightarrow \mC$, $k \in \{-1, 1\}$, 
such that functions 
$$
\widetilde{f}_k(x) := f_k(se^{ik\delta e^x} ),\quad  
-\infty < x < 0, \quad k \in \{-1, 1\},
$$
 are restrictions of some almost periodic 
functions from $AP (\mR)$, and
$$
\sup\limits_{z\in \gamma^k_s (\delta)} |f (z) - f_k(z)| < \varepsilon, \quad k \in \{-1,1\}.
$$
By $SAP $ we denote the Banach algebra of semi-almost periodic
functions on $\mT$ endowed with the supremum norm. The algebra $SAP$ contains as a special case an algebra introduced by Sarason \cite{Sar77}
 in connection with some problems in the theory of Toeplitz operators.

It is easy to see that the set of
points of discontinuity of a function in $SAP $ is at most countable. For 
 a closed subset $S$ of $\mT$, we denote by $SAP (S)$ the Banach algebra 
 of semi-almost periodic functions on $\mT$ that are continuous on $\mT \setminus S$. 
 
 Next, for a semi-almost periodic function $ f \in SAP $, 
 $k \in \{-1, 1\}$, and a point $s \in S$, the {\em left}
($k = -1$) and the {\em right} ($k = 1$) {\em mean values of $f$ over $s$} are given by the
formulas
\begin{equation}\label{eq6.5}
M^k_s(f) := \lim\limits_{n\rightarrow \infty}\frac{1}{b_n-a_n} 
\int_{a_n}^{b_n} f(s e^{ik e^t}) dt, 
\end{equation}
where $(a_n)_{n\in \mN}$ and $(b_n)_{n\in \mN}$ are  arbitrary sequences of real numbers converging to $-\infty$ such that $\lim\limits_{n\rightarrow \infty}(b_n-a_n)=+\infty$. 

The {\em Bohr-Fourier coefficients of $f$ over $s$} can be then defined by the
formulas 
\begin{equation}\label{eq6.6}
a^k_\lambda (f,s):=M_s^k(f e^{-i\lambda \log^k_s}),
\end{equation}
where 
$$
\log^k_s(se^{ikx}):=\log x, \quad  0<x<2\pi, \quad k\in \{-1,1\}.
$$
The {\em spectrum of $f$ over $s$} is 
\begin{equation}\label{eq6.7}
\textrm{spec}^k_s(f):=\{\lambda\in \mR: a_\lambda^k(f,s)\neq 0\}.
\end{equation}

Let $\Sigma : S \times \{-1, 1\} \rightarrow 2^{\mR}$ be a set-valued map 
which associates with each $s \in S$, $ k \in \{-1, 1\}$,  a unital semi-group 
$\Sigma(s, k) \subset  \mR$. By $SAP_\Sigma(S) \subset
SAP (S)$,  we denote the Banach algebra of semi-almost periodic functions
$f$ with $\textrm{spec}^k_s (f ) \subset \Sigma(s, k)$ for all $s \in S$, $k \in \{-1, 1\}$.

 Let $H^\infty$ be the Banach algebra of bounded 
holomorphic functions in $\mD$ with the supremum norm. Then $SAP_\Sigma(S) \cap H^\infty$ is called the {\em algebra of holomorphic semi-almost period functions with spectrum} $\Sigma$.  
 
 If $f\in SAP_\Sigma(S) \cap H^\infty$, then
 \begin{equation}\label{eq6.8}
\textrm{spec}^{-1}_s(f)=\textrm{spec}^1_s(f)=:\textrm{spec}_s(f)
\end{equation}
 and, moreover,
\begin{equation}\label{eq6.9}
a^{-1}_\lambda (f,s)=e^{\lambda\pi}a^{1}_\lambda (f,s)\quad {\rm for\ each}\;\;\lambda\in \textrm{spec}_s(f),
\end{equation}
 see \cite[Proposition~3.3]{BruKin}. Thus, $SAP_\Sigma(S) \cap H^\infty=SAP_{\Sigma_{{\rm sym}}}(S) \cap H^\infty$, where $\Sigma_{{\rm sym}}(s):=\Sigma_{{\rm sym}}(s,\pm 1)=\Sigma(s,-1)\cap\Sigma(s,1)$ for all $s\in S$.

 Let $A_\Sigma^S$ be the closed subalgebra of $H^\infty$ generated by the disk-algebra
$A(\mD)$ and the functions of the form $ge^{\lambda h}$, where ${\rm Re}(h)|_{\mT}$ is the characteristic
function of the closed arc going in the counterclockwise direction from the
initial point at $s$ to the endpoint at $-s$ such that $s\in S$, $\frac{\lambda}{\pi}\in\Sigma_{{\rm sym}}(s)$ and 
$g(z):= z + s$, $z\in\mD$ (in particular, $ge^{\lambda h}$ has discontinuity at $s$ only). It is shown in \cite[Corollary~3.7]{BruKin} that 
\[
SAP_\Sigma(S) \cap H^\infty=A_\Sigma^S.
\]
Let $b^S_{\Sigma}$ denote the maximal ideal space of the algebra $
SAP_{\Sigma}(S) \cap H^\infty$. The structure of $b^S_{\Sigma}$ is described in \cite[Theorem~3.13]{BruKin}. Specifically, the transpose of the embedding $A(\mD)\hookrightarrow SAP_{\Sigma}(S) \cap H^\infty$ determines a continuous epimorphism  $a_{\Sigma}^S :\bar{\mD}\setminus S\to\bar{\mD}$ one-to-one over $\bar{\mD}\setminus S$ and such that for each $s\in S$ the fibre of $a_\Sigma^S$ over $s$ is homeomorphic to the maximal ideal space $b_{\Sigma_{\rm sym}(s)}(T)$ of the algebra $APH_{\Sigma_{\rm sym}(s)}(T)$ of  holomorphic almost periodic functions on the closed strip $T=\{z\in\mC\, :\, {\rm Im}(z)\in [0,\pi]\}$ with the spectrum in $\Sigma_{\rm sym}(s)$. 

Now, we have (cf. \cite[Theorem~1.2]{BRS}): 

\begin{Theorem}[\mbox{\cite[Theorem~3.19]{BruKin}}]
\begin{itemize}
\item[(1)] The map of  cohomology groups induced by embeddings $(a_{\Sigma}^S)^{-1}(s)\hookrightarrow b_\Sigma^S$, $s\in S$, produces an isomorphism 
\[
H^{k}(b_\Sigma^S,\mZ)\cong \bigoplus_{s\in S}H^{k}(b_{\Sigma_{\rm sym}(s)}(T),\mZ),\quad k\ge 1.
\]
\item[(2)]
Suppose that each $\Sigma_{\rm sym}(s)$ is a subset of $\mR_+$ or $\mR_{-}$. 
Then  $H^{k}(b_\Sigma^S,\mZ)=0$ for all $k\ge 1$ and the algebra $SAP_\Sigma(S) \cap H^\infty$ is projective free.
 \end{itemize}
 \end{Theorem}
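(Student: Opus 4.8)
The statement is \cite[Theorem~3.19]{BruKin}; here is how I would approach it, using the structural description of $b_\Sigma^S$ recalled above (with $a_\Sigma^S\colon b_\Sigma^S\to\bar{\mD}$ one-to-one over $\bar{\mD}\setminus S$ and with fibre $F_s:=(a_\Sigma^S)^{-1}(s)$ homeomorphic to $b_{\Sigma_{{\rm sym}}(s)}(T)$) together with the results of \S\ref{section_C(X)} and \S\ref{section_CUBA}. For Part~(1) the plan is to reduce to a \emph{finite} set $S$ and then run a Mayer--Vietoris argument. Since $A_\Sigma^S$ is, by construction, the closed subalgebra of $H^\infty$ generated by $A(\mD)$ together with \emph{all} the generators $ge^{\lambda h}$ attached to the points of $S$, the union $\bigcup_F A_\Sigma^F$ over finite $F\subset S$ is dense in $A_\Sigma^S$, whence $b_\Sigma^S=\varprojlim_F b_\Sigma^F$ over the restriction maps $b_\Sigma^{F'}\to b_\Sigma^F$ ($F\subset F'$); these carry the fibre over $s\in F$ onto $F_s$ (indeed homeomorphically) and collapse the fibres over $F'\setminus F$ to points of $b_\Sigma^F$. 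So, by Lemma~\ref{proposition_25_may_2022_1510}\,(3), it suffices to prove for finite $S$ a natural isomorphism $H^k(b_\Sigma^S,\mZ)\cong\bigoplus_{s\in S}H^k(F_s,\mZ)$ induced by the inclusions $F_s\hookrightarrow b_\Sigma^S$; passing to the injective limit over finite $F$ (the transition maps becoming the obvious inclusions of summands) then yields the general case.

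For finite $S\subset\mT$ put $V:=(a_\Sigma^S)^{-1}(\bar{\mD}\setminus S)$, an open set homeomorphic via $a_\Sigma^S$ to $\bar{\mD}\setminus S$, which is contractible by the radial contraction $(z,t)\mapsto tz$ (deleting boundary points does not obstruct it). For each $s\in S$ choose small, pairwise disjoint open half-disc neighbourhoods $D_s\subset\bar{\mD}$ with $D_s\cap S=\{s\}$ and such that $U_s:=(a_\Sigma^S)^{-1}(D_s)$ deformation retracts onto $F_s$ --- this is exactly where the local structure of $b_\Sigma^S$ near $S$ supplied by \cite[Theorem~3.13]{BruKin} is used. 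Then $\{U,V\}$ with $U:=\bigsqcup_{s\in S}U_s$ is an open cover of $b_\Sigma^S$, $U\cap V=\bigsqcup_s(a_\Sigma^S)^{-1}(D_s\setminus\{s\})\cong\bigsqcup_s(D_s\setminus\{s\})$ is a disjoint union of contractible punctured half-discs, and $H^k(U)\cong\bigoplus_{s\in S}H^k(F_s)$ for each $k$ (finite $S$). Feeding this into the Mayer--Vietoris sequence of $\{U,V\}$ and using $H^k(V)=0=H^k(U\cap V)$ for $k\ge1$, together with surjectivity of the restriction $H^0(U)\oplus H^0(V)\to H^0(U\cap V)$ to settle degree $1$, gives $H^k(b_\Sigma^S,\mZ)\cong H^k(U)\cong\bigoplus_{s\in S}H^k(F_s,\mZ)$ for all $k\ge1$; by naturality of Mayer--Vietoris this is the isomorphism induced by the $F_s\hookrightarrow b_\Sigma^S$, as required.

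For Part~(2), when each $\Sigma_{{\rm sym}}(s)$ lies in $\mR_+$ or $\mR_-$ it is a \emph{pointed} subsemigroup of $\mR$ (it contains $0$ and meets $-\Sigma_{{\rm sym}}(s)$ only in $0$), so the strip counterpart of \cite[Theorem~1.2]{BRS} (see \cite{BruKin}) shows that $b_{\Sigma_{{\rm sym}}(s)}(T)$ is of trivial shape; in particular $H^k(b_{\Sigma_{{\rm sym}}(s)}(T),\mZ)=0$ for all $k\ge1$ and $b_\Sigma^S$ is connected. By Part~(1), $H^k(b_\Sigma^S,\mZ)=0$ for all $k\ge1$, hence by the Universal Coefficient Theorem for \v{C}ech cohomology $H^k(b_\Sigma^S,G)=0$ for all $k\ge1$ and every abelian group $G$. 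To get projective freeness I would present $b_\Sigma^S=\varprojlim_\alpha A_\alpha$ as the inverse limit of the nerves of its finite open coverings, as in the proof of Theorem~\ref{teo2.8}. Given a rank-$n$ complex vector bundle $E$ over $b_\Sigma^S$, Lemma~\ref{proposition_25_may_2022_1510}\,(2) gives $E\cong\phi_\alpha^*(E_\alpha)$ for some $\alpha$; since $c_1(E_\alpha),c_2(E_\alpha)$ restrict to $c_1(E)=c_2(E)=0$ and $H^{2i}(b_\Sigma^S,\mZ)=\varinjlim_\alpha H^{2i}(A_\alpha,\mZ)$, after replacing $\alpha$ by a finer index we may assume $c_1(E_\alpha)=c_2(E_\alpha)=0$, so $E_\alpha$ is trivial on the $4$-skeleton $A_\alpha^4$. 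From here the obstruction-theoretic part of the proof of Theorem~\ref{teo2.8} runs with $b_\Sigma^S$ in place of the approximating spaces there: a classifying map $h\circ\phi_\alpha$ into a complex Grassmannian and a constant map are $4$-homotopic, and since $H^s(b_\Sigma^S,\pi_s)=0$ for all $s\ge1$ (not merely $s\ge5$) and the Grassmannian is simply connected, hence $r$-simple, these maps are homotopic by \cite[Theorem~8.1]{Hu}, so $E$ is trivial. Thus every finite-type complex vector bundle over the connected compact space $b_\Sigma^S$ is trivial, so $C(b_\Sigma^S)$ is projective free by Proposition~\ref{prop2.1}, and $SAP_\Sigma(S)\cap H^\infty$ is projective free by Theorem~\ref{14_may_2022_12:42}\,(1). (Alternatively: splicing the null-homotopies of $g|_{U_s}$ and $g|_V$ over the contractible overlaps $U_s\cap V$, for any map $g$ into an ANR and any finite $S$, and then invoking Lemma~\ref{proposition_25_may_2022_1510}\,(1), shows directly that $b_\Sigma^S$ is of trivial shape, from which triviality of all its complex vector bundles is immediate.)

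The step I expect to be the main obstacle is the local topology of $b_\Sigma^S$ along the fibres over $S$: one needs a neighbourhood of $F_s$ that deformation retracts onto $F_s$ with the corresponding punctured neighbourhood contractible --- informally, that $b_\Sigma^S$ is obtained from $\bar{\mD}$ by replacing each $s\in S$ with $b_{\Sigma_{{\rm sym}}(s)}(T)$ in a way that is trivial over the link. This is the substance of the fibration-type description \cite[Theorem~3.13]{BruKin}; granting it, Part~(1) is bookkeeping with Mayer--Vietoris and inverse limits. A secondary point in Part~(2) is that $b_\Sigma^S$ is in general infinite-dimensional (the fibres $b_{\Sigma_{{\rm sym}}(s)}(T)$ are typically infinite-dimensional solenoidal spaces), so Theorem~\ref{teo2.8} cannot be quoted verbatim; one re-runs its proof using the nerves of $b_\Sigma^S$ as the finite-dimensional approximants and exploiting that the positive-degree \v{C}ech cohomology of $b_\Sigma^S$ vanishes in \emph{every} degree.
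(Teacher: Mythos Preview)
The paper does not prove this theorem: it is stated in \S\ref{subsec_6.3} purely as a citation of \cite[Theorem~3.19]{BruKin}, with no argument supplied. There is therefore no ``paper's proof'' against which to compare your proposal; what you have written is a reconstruction of the original result.

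Your outline is nonetheless reasonable. The reduction of Part~(1) to finite $S$ via $b_\Sigma^S=\varprojlim_F b_\Sigma^F$ and Lemma~\ref{proposition_25_may_2022_1510}\,(3) is the natural move (the density of $\bigcup_F A_\Sigma^F$ in $A_\Sigma^S$ is clear from the description of generators), and the Mayer--Vietoris computation for finite $S$ is the right mechanism. You correctly isolate the one substantive ingredient: that a neighbourhood of each fibre $F_s$ deformation-retracts onto $F_s$ with contractible punctured neighbourhood. This is not a formality---it encodes precisely how $b_{\Sigma_{\rm sym}(s)}(T)$ is glued in place of $s$---and must come from the detailed local description in \cite[Theorem~3.13]{BruKin}; you are right to flag it as the crux rather than treat it as obvious. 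For Part~(2), your adaptation of the proof of Theorem~\ref{teo2.8}, running the obstruction argument directly on $b_\Sigma^S$ via its nerves and using that \emph{all} positive-degree \v{C}ech cohomology vanishes (so the obstructions in $H^s(b_\Sigma^S,\pi_s)$ die for every $s\ge 1$), is correct and legitimately sidesteps the finite-dimensionality hypothesis of that theorem. The alternative trivial-shape argument you sketch at the end is cleaner still, though it rests on the same local-structure input near the fibres.
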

 
 \subsection{Algebras of bounded holomorphic functions} 
\label{subsec_6.4} $\;$ 

\noindent Let $X$ be a connected Riemann surface such
that the Banach algebra $H^\infty(X)$ of bounded holomorphic functions on $X$ separates the points of $X$.
Since $X$ is homotopy equivalent to a one-dimensional $CW$-complex, Theorem \ref{teo2.8} and Proposition \ref{prop2.1} imply that
the algebras $C(X)$ and $C_b(X)$ are projective free.
The transpose of the isometric embedding  $H^\infty(X)\hookrightarrow C_b(X)$ induces a continuous map $p:\beta(X)\to M(H^\infty(X))$ of the maximal ideal spaces, with image ${\rm cl}\, X$,  the closure of $X$ (identified with the set of evaluations at points of $X$) in $M(H^\infty(X))$.  Since 
$\dim X=2$ 
and $H^2(X,\mZ)=0$,  $\dim \beta X=2$ and $H^2(\beta X,\mZ)=0$ as well (see the proof of Theorem \ref{teo2.9} for the references). Moreover, the map $p$ is identical on $X$. These make the following conjecture plausible (cf. the Question in \cite{Bru21}): 

\begin{conjecture}\label{con6.3}
The covering dimension of ${\rm cl}\, X$ is $2$ and the \v{C}ech cohomology group $H^2({\rm cl}\, X,\mZ)=0$.
\end{conjecture}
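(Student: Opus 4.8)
The plan is to realise ${\rm cl}\,X$ as a continuous image of the maximal ideal space $M(H^\infty)$ of $H^\infty=H^\infty(\mD)$ and then to transport Su\'arez's bounds --- $\dim K\le 2$ and $H^2(K,\mZ)=0$ for every closed $K\subset M(H^\infty)$ --- across this map. Since $H^\infty(X)$ separates the points of $X$, the surface $X$ carries a nonconstant bounded holomorphic function and therefore is not the Riemann sphere, the plane, the punctured plane, or a complex torus; hence $X$ is hyperbolic and admits a holomorphic universal covering $\pi\colon\mD\to X$ with deck group $\Gamma\cong\pi_1(X)$, and pullback identifies $H^\infty(X)$ with the subalgebra $H^\infty(\mD)^\Gamma\subset H^\infty(\mD)$ of $\Gamma$-invariant functions. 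The transpose of the inclusion $H^\infty(\mD)^\Gamma\hookrightarrow H^\infty(\mD)$ is a continuous map $q\colon M(H^\infty)\to M(H^\infty(X))$; a point evaluation at $z\in\mD$ restricts on $H^\infty(X)$ to the point evaluation at $\pi(z)\in X$, so $q(\mD)=X$, and since $\mD$ is dense in $M(H^\infty)$ the compactum $q(M(H^\infty))$ equals $\overline{X}={\rm cl}\,X$. The lower bound is then immediate: choosing a closed coordinate disc $\overline{B}\subset X$, which is compact and hence closed in the compact Hausdorff space ${\rm cl}\,X$, monotonicity of covering dimension on closed subspaces gives $\dim{\rm cl}\,X\ge\dim\overline{B}=2$. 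Everything reduces to proving $\dim{\rm cl}\,X\le 2$ and $H^2({\rm cl}\,X,\mZ)=0$.

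For the two upper bounds I would study the fibres of $q$. A Vietoris--Begle argument would give, as soon as every fibre $q^{-1}(\xi)$ is connected with $H^1(q^{-1}(\xi),\mZ)=0$, that $q^\ast\colon H^2({\rm cl}\,X,\mZ)\hookrightarrow H^2(M(H^\infty),\mZ)$, whence $H^2({\rm cl}\,X,\mZ)=0$ by Su\'arez; and a dimension-theoretic mapping theorem for closed maps with suitably small fibres would give $\dim{\rm cl}\,X\le\dim M(H^\infty)\le 2$. The fibre over $x\in X$ is $q^{-1}(x)=\{\varphi\in M(H^\infty):\varphi(g\circ\pi)=g(x)\ \text{for all}\ g\in H^\infty(X)\}$, and it contains the closure in $M(H^\infty)$ of the $\Gamma$-orbit $\pi^{-1}(x)\subset\mD$; for fibres over points of ${\rm cl}\,X\setminus X$ one would bring in the description of the ``large'' fibres of $M(H^\infty(X))$ coming from the corona-type analysis of bounded holomorphic functions on Riemann surfaces (as in \cite{Bru21} and related work of the first author), expressing them through analytic discs and Gleason parts so as to reduce to the classical structure of Hoffman and Su\'arez for the disc. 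An attractive alternative --- which may in fact be necessary --- is to avoid the transfer map altogether and adapt Su\'arez's original fibrewise construction directly to the algebra $H^\infty(X)$, i.e.\ to build by hand, from the analytic structure of $M(H^\infty(X))$ over its ``shell'', finite open coverings of ${\rm cl}\,X$ whose nerves have dimension $\le 2$ and trivial $H^2$. The inverse-limit machinery of Lemma~\ref{proposition_25_may_2022_1510} would then let one pass from such coverings to the dimension and cohomology estimates.

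The main obstacle is exactly the behaviour of $q$ over $\Gamma$-orbits. When $\Gamma$ is nontrivial, $\pi^{-1}(x)$ is an infinite set of distinct points of $\mD$; as $\mD$ is open in $M(H^\infty)$ with its usual topology, $\pi^{-1}(x)=q^{-1}(x)\cap\mD$ is an infinite discrete subset of $q^{-1}(x)$, so the fibre is disconnected --- indeed its closure can be as complicated as $\beta\mN$ --- and neither the acyclicity hypothesis underlying the Vietoris--Begle step nor any smallness hypothesis that would bar dimension-raising can hold. Correspondingly, a continuous section of $q$ cannot exist, since over $X$ it would be a continuous section of the covering $\pi$, possible only when $X\cong\mD$, the case already settled by Su\'arez. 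Circumventing this will require either replacing $q$ by a surjection with cohomologically trivial, low-dimensional fibres --- for instance through a suitable $\Gamma$-quotient of $M(H^\infty)$, or by first treating Riemann surfaces satisfying the corona theorem (Carleson- or Widom-type surfaces), where ${\rm cl}\,X=M(H^\infty(X))$ and more structure is available --- or the direct adaptation of Su\'arez's method described above; establishing the fine structure of the non-orbit fibres, where the disc-cover picture gives no information, is the crux.
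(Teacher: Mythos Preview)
The statement you are attempting to prove is labelled \textbf{Conjecture}~\ref{con6.3} in the paper and is \emph{not} proved there; it is presented as an open problem, with only partial cases listed afterwards (unbranched coverings of bordered Riemann surfaces via \cite[Theorem~1.3]{Bru21}, and certain $\mathscr{B}$-domains via \cite[Theorem~1.1]{Bru21}). There is thus no paper proof to compare against.

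Your proposal is candid about its own status: it is a research programme rather than a proof, and you correctly isolate the central obstruction. The map $q\colon M(H^\infty(\mD))\to{\rm cl}\,X$ induced by the universal covering indeed has disconnected fibres over points of $X$ whenever $\pi_1(X)$ is nontrivial --- over $x\in X$ the fibre contains the infinite discrete set $\pi^{-1}(x)$ --- so neither the Vietoris--Begle theorem (which needs acyclic fibres) nor the closed-map dimension theorems (which need zero-dimensional fibres to avoid raising dimension) applies. This is not a technicality that a small modification repairs; it is exactly why the conjecture is open. The known partial results succeed by exploiting extra structure unavailable in general: in the bordered-covering case one has a tractable action of the deck group on $M(H^\infty(\mD))$, and in the $\mathscr{B}$-domain case the hyperbolically-rare geometry localises the problem. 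Your alternative suggestion --- to rerun Su\'arez's nerve-theoretic argument directly for $H^\infty(X)$ --- is the natural line of attack, but the Hoffman--Su\'arez fine structure (interpolating sequences, analytic discs, Gleason parts) is developed for the disc and does not transfer automatically to an arbitrary hyperbolic $X$; carrying that through would constitute a genuine advance, not a completion of a sketch.
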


If the conjecture is true, it implies the validity of the following:\smallskip

\begin{conjecture}\label{con6.4}
If ${\rm cl}\, X\!=\!M(H^\infty(X))\;$\footnote{I.e., that the corona theorem is valid for $H^\infty(X)$.}, then $H^\infty(X)$ is a projective free algebra.
\end{conjecture}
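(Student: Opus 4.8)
The strategy is to combine the corona hypothesis with Conjecture~\ref{con6.3} and then invoke the topological criteria already established for algebras of continuous functions. By Theorem~\ref{14_may_2022_12:42}(1), the Banach algebra $H^\infty(X)$ is projective free if and only if $C(M(H^\infty(X)))$ is projective free. Under the assumption ${\rm cl}\,X=M(H^\infty(X))$ it therefore suffices to prove that $C({\rm cl}\,X)$ is projective free, and for this I would apply the second part of Theorem~\ref{teo2.9}.

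So I would verify that $Y:={\rm cl}\,X$ satisfies the hypotheses of Theorem~\ref{teo2.9}. As a closed subspace of the compact Hausdorff space $M(H^\infty(X))$, the set $Y$ is itself compact Hausdorff, hence paracompact and normal. It is connected, because $X$ (identified with its set of point evaluations) is connected and dense in $Y$. Granting Conjecture~\ref{con6.3}, $\dim Y=2<\infty$ and $H^2(Y,\mZ)=0$. Since for a paracompact space the \v{C}ech cohomology with values in any abelian group vanishes in degrees exceeding the covering dimension, we also get $H^4(Y,\mZ)=0$ and $H^n(Y,\mZ)=0$ for all $n\ge 5$. Thus all the hypotheses of Theorem~\ref{teo2.9} hold for the connected space $Y$, so $C({\rm cl}\,X)=C(Y)$ is projective free; combined with the reduction of the first paragraph, this shows $H^\infty(X)$ is projective free.

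The genuine obstacle is not this deduction but Conjecture~\ref{con6.3} itself: one must show that the closure of $X$ in $M(H^\infty(X))$ has covering dimension exactly $2$ and trivial $H^2$. For $X=\mD$ this follows from the results of Su\'{a}rez (cf. Example~\ref{example_4.3}(4)) together with Carleson's corona theorem, but for a general Riemann surface one would need an analog of Su\'{a}rez's estimates. A plausible route is to exhaust $X$ by relatively compact subsurfaces $X_j$ with smooth boundary, to realise (or approximate) ${\rm cl}\,X$ through an inverse system built from the closures ${\rm cl}\,X_j$ or from the algebras $H^\infty(X_j)$, and to transport dimension and cohomology bounds along this system using Lemma~\ref{proposition_25_may_2022_1510} and Theorem~\ref{teo2.8}; the hard analytic input is the control of the fibres of the map $p:\beta X\to M(H^\infty(X))$ over points lying outside $X$, which is exactly where the corona-type structure theory enters. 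I should also emphasise that the corona hypothesis ${\rm cl}\,X=M(H^\infty(X))$ is essential at the last step of the deduction: if ${\rm cl}\,X$ were a proper closed subset of $M(H^\infty(X))$, projective freeness of $C({\rm cl}\,X)$ would say nothing about $C(M(H^\infty(X)))$, since projective freeness of $C(Z)$ is not inherited by $C$ of a space containing $Z$.
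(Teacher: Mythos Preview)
Your deduction is exactly the one the paper indicates: the sentence preceding Conjecture~\ref{con6.4} states that it follows from Conjecture~\ref{con6.3}, and the route is precisely the combination of the corona hypothesis with the dimension and $H^2$ bounds of Conjecture~\ref{con6.3}, fed into Theorem~\ref{14_may_2022_12:42} (equivalently, Theorem~\ref{teo2.9}). Your additional remarks on attacking Conjecture~\ref{con6.3} go beyond what the paper attempts; the paper instead lists concrete classes of Riemann surfaces (subdomains of unbranched coverings of bordered surfaces, certain $\mathscr{B}$-domains) where Conjectures~\ref{con6.3} or~\ref{con6.4} are already known, and notes (Remark~\ref{rem6.5}) that the corona hypothesis cannot be dropped.
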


\noindent Let us describe some classes of Riemann surfaces $X$ for which one of the conjectures is valid.

\smallskip

\noindent (1) Let $R$ be an unbranched covering of a bordered Riemann surface and $X$ be a domain in $R$ such that
inclusion $i:X\hookrightarrow R$ induces a monomorphism $i_*:\pi_1(X)\to\pi_1(R)$ of fundamental groups. The 
corona theorem, ${\rm cl}\, X=M(H^\infty(X))$, was proved in \cite[Corollary~1.6]{Bru04} and
projective freeness of $H^\infty(X)$ was established in \cite[Theorem~1.5]{BruSas}  using an analog of the classical Lax-Halmos theorem proved in \cite[Theorem~1.7]{Bru04b}. 
Thus Conjecture~\ref{con6.4} is valid  for such $X$.

In the special case $X=R$, it was proved in \cite[Theorem~1.3]{Bru21} that Conjecture~\ref{con6.3} is valid as well.
 
\begin{remark}\label{rem6.5}
Conjecture~\ref{con6.4} is false if $X$ has a nonempty corona, i.e., if $M(H^\infty(X))\setminus {\rm cl}\,X\ne\emptyset$. Indeed, using \cite[Theorem~1.2]{Bru03}, given an integer $n\ge 2$, one can construct an unbranched covering of a compact Riemann surface $X$ for which $H^\infty(X)$ separates points, 
 $\dim M(H^\infty(X))\ge n$, 
and $H^2(M(H^\infty(X)),\mZ) \ne 0$. The latter implies that $H^\infty(X)$ is not projective free.
\end{remark}

\noindent (2) A  wide class of planar domains, the so-called $\mathscr{B}$-domains, was introduced and
studied by Behrens \cite{Beh}. A set $X\subset \mC$ is called a {\em $\mathscr{B}$-domain} if  it is obtained from a domain $Y \subset \mC$ by deleting a (possibly finite) hyperbolically-rare sequence of closed discs $(\Delta_n)$ contained in $Y$ with centres $\alpha_n$, i.e., such that there are disjoint closed discs $D_n$ with centres $\alpha_n$ satisfying $\Delta_n\Subset D_n \subset Y$ and 
 $\sum \frac{\textrm{rad}\;\!\Delta_n}{\textrm{rad}\;\! D_n}<\infty$. 
It was shown in \cite[Theorem~1.1]{Bru21} that if $ X$ is obtained from a domain $Y \subset \mC$ by deleting a (possibly finite) hyperbolically-rare sequence of closed discs such that  (i) ${\rm cl}\, Y=M(H^\infty(Y ))$ and (ii) Conjecture~\ref{con6.3} is valid for $Y$, then ${\rm cl}\, X=M(H^\infty(X ))$ and Conjecture~\ref{con6.3} is valid for $X$ as well. (In particular, algebras $H^\infty(Y)$ and $H^\infty(X)$ are projective free.) 

The class of domains $Y$ satisfying (i), (ii) includes planar unbranched coverings of bordered Riemann surfaces (see part (1)) and domains obtained from them by deleting compact subsets of analytic capacity zero (e.g., totally disconnected compact subsets). 
Starting from such a $Y$ one can construct a descending chain $Y:= Y_0 \supset  Y_1 \supset Y_2 \supset \cdots  \supset Y_n$, $n \in \mN$,  of $\mathscr{B}$-domains, 
such that each $Y_i$ is defined by deleting a hyperbolically-rare
sequence of closed discs and then a compact subset of analytic capacity zero from $Y_{i-1}$.
Then all $Y_i$ satisfy assumptions (i), (ii), and, in particular, all algebras $H^\infty(Y_i)$ are projective free.

Recall that every bordered Riemann surface $S$ is a domain in a
compact Riemann surface $\widetilde S$ such that $\widetilde S\setminus S$ is the disjoint 
union of finitely many discs with
analytic boundaries (see \cite{Sto65}).
Here are some examples of planar unbranched coverings of $S$: 

\smallskip 

\noindent (a) It is known (see, e.g., \cite[Chap. X]{For29}) 
that each $\widetilde S$ is the quotient of a planar domain $\Omega$ by the discrete action of a
Schottky group $G$ (the free group with $g$ generators, where $g$ is the genus of $\widetilde S$) 
by M\"obius transformations. The corresponding quotient map $r : \Omega \rightarrow \widetilde S$ determines
the regular covering of $\widetilde S$ with the deck transformation group $G$. Then $Y := r^{-1}(S) \subset \Omega$ 
is a regular covering of $S$ satisfying conditions (i),(ii). By definition, $Y$ is the
complement in $\Omega$ of the finite disjoint union of $G$-orbits of compact simply connected
domains with analytic boundaries biholomorphic by $r$ to the connected components of
$\widetilde S \setminus S$. 

\smallskip 

\noindent (b) Consider the universal covering $r_u : \widetilde S_u \rightarrow \widetilde S$ of $\widetilde S$ 
(where $\widetilde S_u = \mD$ if $g \geq 2$, $\widetilde S_u = \mC$ if $g = 1$, and $\widetilde S_u = \mC\mP$ if $g = 0$), 
then $Y := r_u^{-1}  (S) \subset \widetilde S_u$ satisfies conditions (i),(ii)  as well. Here $Y$ is the complement in $\widetilde S_u$ of the finite disjoint union of
orbits under the action by M\"obius transformations of the fundamental group $\pi_1(\widetilde S)$ of $\widetilde S$ of
compact simply connected domains with analytic boundaries biholomorphic by $r_u$ to the
connected components of $\widetilde S \setminus  S$.
\smallskip

Finally, let us mention that if a connected Riemann surface $X$ is such that $H^\infty(X)$ separates its points, ${\rm cl}\, X=M(H^\infty(X))$,  
 $\dim {\rm cl}\, X=2$ 
and $H^2({\rm cl}\, X,\mZ)=0$, then by the K\"{u}nneth formula  (see, e.g., \cite[Chap.~6, Exercise~E]{Spa})  for $M(H^\infty(X))^i$, $2\le i\le 4$, the \v{C}ech cohomology groups $H^{n}(M(H^\infty(X))^i,\mZ)=0$ for all $n\ge 5$ and 
 $\dim M(H^\infty(X))^i =2i$.  
 Hence, due to Theorem~\ref{14_may_2022_12:42}, for all $2\le i\le 4$, the uniform algebra $(H^\infty(X))^{\widehat\otimes_\varepsilon i}$ (the $i^{\textrm{th}}$ injective tensor power of $H^\infty(X)$)  is Hermite.

\section{Appendix}

Let $A$ be a commutative unital complex Banach algebra and $G$ be a finite subgroup of automorphisms of $A$. By $A_G\subset  A$ we denote the Banach subalgebra of elements invariant with respect to the action of $G$, i.e.,
\[
A_G:=\{a\in A\, :\, g(a)=a\quad \forall g\in G\}.
\]
There is a natural projection  $P_G:A\to A_G$ given by the formula
\begin{equation}\label{eq3.6}
P_G(a):=\frac{1}{|G|}\sum_{g\in G} g(a) \;\; \textrm{ for all } a\in A.
\end{equation}
Here $|G|$ is the cardinality of the set $G$.

For each $g\in G$, the transpose of the map $a\mapsto g(a):A\rightarrow A$ induces a homeomorphism 
$g^*$ of the maximal ideal space $ M(A)$  of $A$. Thus we obtain an action of $G$ on $M(A)$. Let $M(A)/G$ be the quotient space by  this action  and $\pi: M(A) \rightarrow M(A)/G$ be the quotient map. We equip $M(A)/G$ with the smallest topology in which the map $\pi$ is continuous. Then $M(A)/G$ becomes a compact Hausdorff space homeomorphic to the maximal ideal space of the subalgebra $C(M(A))_G\subset C(M(A))$ of continuous functions invariant with respect to the action of $G$ on $M(A)$. We have:

\begin{lemma}\label{lem3.5}
$M(A)/G$ is homeomorphic to the maximal ideal space of the algebra $A_G$.
\end{lemma}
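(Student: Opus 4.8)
The plan is to realise $M(A)/G$ as $M(A_G)$ by restriction of characters, the main lever being that the averaging operator $P_G$ of \eqref{eq3.6} is a unital, $A_G$-linear projection of $A$ onto $A_G$. First I would set up the comparison map: restriction of characters gives $r\colon M(A)\to M(A_G)$, $r(\varphi):=\varphi|_{A_G}$, which is continuous for the Gelfand topologies since it is given by evaluation at the elements of $A_G$. For $a\in A_G$ and $g\in G$ one has $(g^{*}\varphi)(a)=\varphi(g(a))=\varphi(a)$, so $r$ is constant on $G$-orbits and hence factors through the quotient map $\pi\colon M(A)\to M(A)/G$ as $r=\bar r\circ\pi$ with $\bar r\colon M(A)/G\to M(A_G)$ continuous. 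Since $M(A)/G$ is compact and $M(A_G)$ is Hausdorff, it will suffice to show that $\bar r$ is a bijection, i.e.\ that $r$ is surjective and that each of its fibres is a single $G$-orbit.

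For surjectivity, let $\psi\in M(A_G)$ and put $\mathfrak n:=\ker\psi$. I claim the ideal $\mathfrak n A\subseteq A$ is proper: if $1=\sum_i n_i a_i$ with $n_i\in\mathfrak n$ and $a_i\in A$, then applying $P_G$ and using its $A_G$-linearity and $P_G(1)=1$ gives $1=\sum_i n_i P_G(a_i)\in\mathfrak n$ (as each $P_G(a_i)\in A_G$), which is impossible. Hence $\mathfrak n A$ is contained in some maximal ideal of $A$, which by Gelfand--Mazur equals $\ker\varphi$ for some $\varphi\in M(A)$; then $\mathfrak n\subseteq\ker\varphi\cap A_G=\ker(\varphi|_{A_G})$, and maximality of $\mathfrak n$ in $A_G$ forces $\ker(\varphi|_{A_G})=\mathfrak n$, i.e.\ $r(\varphi)=\psi$.

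For the fibre structure, suppose $\varphi_1,\varphi_2\in M(A)$ satisfy $r(\varphi_1)=r(\varphi_2)$ but lie in distinct $G$-orbits $O_1$ and $O_2$. These orbits are disjoint, so $O_1\sqcup O_2$ is a finite set of pairwise distinct characters of $A$; the associated maximal ideals are pairwise comaximal, so by the Chinese Remainder Theorem there is $a\in A$ with $\chi(a)=1$ for all $\chi\in O_2$ and $\chi(a)=0$ for all $\chi\in O_1$. With $b:=P_G(a)\in A_G$ we then obtain $\varphi_1(b)=\frac1{|G|}\sum_{g\in G}(\varphi_1\circ g)(a)=0$ and $\varphi_2(b)=\frac1{|G|}\sum_{g\in G}(\varphi_2\circ g)(a)=1$, contradicting $\varphi_1|_{A_G}=\varphi_2|_{A_G}$. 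Hence each fibre of $r$ is exactly one $G$-orbit, so $\bar r$ is a continuous bijection from a compact space onto a Hausdorff space and therefore a homeomorphism, which is the assertion.

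The hard part will be the two non-formal steps---surjectivity and the fibre computation---and both rest on the same point, namely that $P_G$ is an $A_G$-module retraction of $A$ onto $A_G$; the fibre computation additionally uses the classical fact that an element of a commutative unital Banach algebra can be prescribed arbitrarily on any finite subset of its maximal ideal space. Everything else (continuity of $r$, the descent through the quotient, and the compact-to-Hausdorff argument) is routine.
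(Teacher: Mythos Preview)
Your proof is correct and follows essentially the same route as the paper: the comparison map is restriction of characters factored through the quotient, injectivity uses interpolation on finite subsets of $M(A)$ plus averaging by $P_G$, and surjectivity rests on the $A_G$-linearity of $P_G$ to show that a B\'ezout identity $\sum b_i a_i=1$ with $a_i\in A_G$ can be averaged to one in $A_G$. The paper phrases surjectivity via a topological separation-plus-corona argument rather than your cleaner ideal-theoretic one, but the content is the same.
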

\begin{proof}
Let $\{g^*(x)\, :\, g\in G\}\subset M(A)$ be the equivalence class representing $\pi(x)\in M(A)/G$ for $x\in M(A)$. Since for each $a\in A_G$, 
\[
(g^*(x))(a)=x(g(a))=x(a) 
\]
(as elements of $A_G$ are invariant with respect to the action of $G$ on $A$), the functional $\varphi_z$, $z\in  M(A)/G$,
\[
\varphi_z(a):=x(a),\quad x\in \pi^{-1}(z),\quad \forall a\in A_G,
\]
is well-defined and is an element of $M(A_G)$. Thus, we have a  map $\Phi: M(A)/G\to M(A_G)$, $\Phi(z):=\varphi_z$.
Let us show that $\Phi$ is continuous. Indeed, by the definition of the Gelfand topology on $M(A_G)$, it suffices to show that for an open set $U_a:=\{y\in M(A_G)\, :\, |y(a)|< 1\}\subset M(A_G)$ with $a\in A_G$, the set $\Phi^{-1}(U_a)$ is open in $M(A)/G$. In turn, by the definition of topology on $M(A)/G$, the previous set is open if and only if its preimage under $\pi$ is open in $M(A)$.
We have 
\[
\begin{split}
\pi^{-1}(\Phi^{-1}(U_a))=&\;\! \{x\in M(A)\, :\, |\varphi_{\pi(x)}(a)|<1\}\\ = &\;\! \{x\in M(A)\, :\,|x(a)|<1\}.
\end{split}
\]
The latter is an open subset of $M(A)$ by the definition of the Gelfand topology.

Next, let us show that the map $\Phi$ is injective.

Let $z_1,z_2$ be distinct points of $M(A)/G$.  Since every finite subset of $M(A)$ is 
interpolating for  $A$,  there is $a\in A$ such that
\[
x(a)=0\;\; {\rm for\ all}\;\; x\in \pi^{-1}(z_1),\quad {\rm and}\quad
y(a)=1\;\; {\rm for\ all}\;\; y\in \pi^{-1}(z_2).
\]
Then, for $P_G(a)\in A_G$ (see \eqref{eq3.6}) we obtain 
\[
\varphi_{z_1}(P_G(a))=0,\quad {\rm and}\quad  \varphi_{z_2}(P_G(a))=1,
\]
i.e., $\Phi(z_1)\ne\Phi(z_2)$. Therefore $\Phi$ is injective and, hence, embeds $M(A)/G$ as a compact subset of
$M(A_G)$. 

Finally, let us show that the map $\Phi$ is onto.
If, on the contrary, there is  $y\in M(A_G)\setminus  \Phi(M(A)/G)$, then by the definition of the Gelfand topology on $M(A_G)$, there exist $a_1,\dots, a_k\in A_G$ such that 
\begin{equation}\label{eq3.9}
y(a_1)=\cdots =y(a_k)=0\quad{\rm and}\quad
\max_{1\le i\le k}\min_{z\in M(A)/G} |\varphi_z(a_i)|\ge\delta>0.
\end{equation}
This implies that 
\[
\max_{1\le i\le k} \min_{x\in M(A)}   |x(a_i)|\ge\delta>0.
\]
Equivalently, the family $a_1,\dots, a_n$ does not belong to a maximal ideal of $A$, i.e., there exist  $b_1,\dots, b_n\in A$ such that
\begin{equation}\label{eq3.10}
b_1a_1+\cdots +b_n a_n=1.
\end{equation}
We set
\[
\tilde b_i:=P_G(b_i),\quad 1\le i\le k.
\]
Then each $\widetilde b_i\in A_G$, and since $1$ and all $a_i\in A_G$, equation \eqref{eq3.10} implies that
\begin{equation}\label{eq3.11}
\tilde b_1a_1+\cdots +\tilde b_n a_n=1.
\end{equation} 
Applying $y$ to \eqref{eq3.11}, we get due to \eqref{eq3.9},
\[
1=y(1)=y(\tilde b_1)y(a_1)+\cdots +y(\tilde b_n)y( a_n)=0,
\]
a contradiction which shows that $\Phi$ is a surjection. Therefore
$\Phi: M(A)/G\to M(A_G)$ is a homeomorphism, as required.
\end{proof}

\end{document}